\DeclareMathOperator{\red}{red}
\DeclareMathOperator{\Res}{Res}
\theoremstyle{plain}
\newtheorem{theorem}{Theorem}[section]
\newtheorem{proposition}[theorem]{Proposition}
\newtheorem{corollary}[theorem]{Corollary}
\newtheorem{lemma}[theorem]{Lemma}
\newtheorem{question}[theorem]{Question}
\newtheorem*{claim*}{Claim}
\theoremstyle{definition}
\newtheorem{remark}[theorem]{Remark}
\newtheorem{example}[theorem]{Example}
\theoremstyle{plain}
\newcounter{claimCount}
\newtheorem{claim}[claimCount]{Claim}
\newcommand{\enm}[1]{\ensuremath{#1}}          %
\newcommand{\cal}[1]{\mathcal{#1}}
\renewcommand{\bar}[1]{\overline{#1}}
\newcommand{\bbA}{\mathbb{A}}
\newcommand{\bbC}{\mathbb{C}}
\newcommand{\bbF}{\mathbb{F}}
\newcommand{\bbN}{\mathbb{N}}
\newcommand{\bbP}{\mathbb{P}}
\newcommand{\LT}{\mathrm{LT}}
\newcommand{\HP}{\mathrm{HP}}
\newcommand{\calA}{\mathcal{A}}
\newcommand{\calI}{\mathcal{I}}
\newcommand{\calL}{\mathcal{L}}
\newcommand{\calO}{\mathcal{O}}
\newcommand{\calS}{\mathcal{S}}
\newcommand{\calZ}{\mathcal{Z}}
\newcommand{\frakS}{\mathfrak{S}}
\newcommand{\CC}{\enm{\mathbb{C}}}
\newcommand{\PP}{\enm{\mathbb{P}}}
\newcommand{\Ii}{\enm{\cal{I}}}
\newcommand{\Oo}{\enm{\cal{O}}}
\newcommand{\Ss}{\enm{\cal{S}}}
\newcommand{\Zz}{\enm{\cal{Z}}}
\renewcommand{\phi}{\varphi}
\renewcommand{\theta}{\vartheta}
\renewcommand{\epsilon}{\varepsilon}
\newcommand{\id}{\mathrm{id}}
\newcommand{\dotitem}{\item[$\cdot$]}
\newcommand{\vvirg}{, \dots ,}
\newcommand{\ootimes}{ \otimes \cdots \otimes}
\newcommand{\ttimes}{ \times \cdots \times}
\newcommand{\rank}{\mathrm{rank}}
\newcommand{\uR}{\underline{R}}
\DeclareMathOperator{\Hom}{Hom}
\newcommand{\scrV}{\mathscr{V}}
\DeclareMathOperator{\Sym}{Sym}
\newcommand{\xto}[1]{\xrightarrow{#1}}
\newcommand{\uc}{\underline{c}}
\newcommand{\textsum}{\textstyle \sum}
\pgfplotsset{compat=1.8}
\title[On the partially symmetric rank of tensor products]{On the partially symmetric rank of tensor products of W-states and other symmetric tensors}
\author[E. Ballico]{Edoardo Ballico}
\address[E. Ballico]{Dipartimento di Matematica, Univ. Trento, Italy, \normalfont \lowercase{\texttt{edoardo.ballico@unitn.it}}}
\author[A. Bernardi]{Alessandra Bernardi}
\address[A. Bernardi]{Dipartimento di Matematica, Univ. Trento, Italy, \normalfont \lowercase{\texttt{alessandra.bernardi@unitn.it}}}
\author[M. Christandl]{Matthias Christandl}
\address[M. Christandl]{QMATH - Univ. of Copenhagen, Denmark, \normalfont \lowercase{\texttt{christandl@math.ku.dk}}}
\author[F. Gesmundo]{Fulvio Gesmundo}
\address[F. Gesmundo]{QMATH - Univ. of Copenhagen, Denmark, \normalfont \lowercase{\texttt{fulges@math.ku.dk}} (corresponding author)}
\keywords{Partially symmetric rank, cactus rank, tensor rank, $W$-state, entanglement}
\subjclass[2010]{15A69; 14M20, 14N05}
\begin{document}

\begin{abstract}
Given tensors $T$ and $T'$ of order $k$ and $k'$ respectively, the tensor product $T \otimes T'$ is a tensor of order $k+k'$. It was recently shown that the tensor rank can be strictly submultiplicative under this operation ([Christandl-Jensen-Zuiddam]). We study this phenomenon for symmetric tensors where additional techniques from algebraic geometry are available. The tensor product of symmetric tensors results in a partially symmetric tensor and our results amount to bounds on the partially symmetric rank. Following motivations from algebraic complexity theory and quantum information theory, we focus on the so-called \emph{$W$-states}, namely monomials of the form $x^{d-1}y$, and on products of such. In particular, we prove that the  partially symmetric rank of $x^{d_1 -1}y \ootimes x^{d_k-1} y$ is at most $2^{k-1}(d_1+ \cdots +d_k)$.
\end{abstract}

\maketitle

\section{Introduction}\label{section: intro}

We write $S^d \bbC^2$ for the subspace of symmetric tensors in $(\bbC^2)^{\otimes d}$ and we identify it with the space of complex homogeneous polynomials of degree $d$ in two variables. Given a partially symmetric tensor $T\in S^{d_1}\mathbb{C}^2\otimes \cdots \otimes S^{d_k}\mathbb{C}^2$, a structured tensor decomposition of $T$ is a decomposition of $T$ as 
\begin{equation}\label{tensor:decomposition}
T=\sum_{i=1}^r v_{i,1}^{\otimes d_1}\otimes \cdots \otimes v_{i,k}^{\otimes d_k},
\end{equation}
with $v_{i,j} \in \bbC^2$. The minimum integer $r$ for which an expression as in \eqref{tensor:decomposition} exists is the \emph{partially symmetric rank} of $T$, that we denote by $R_{d_1 \vvirg d_k}(T)$.

In this paper, we focus on the submultiplicativity of the partially symmetric rank: if $T_1 \in S^{d_1} \bbC^2 \ootimes S^{d_i} \bbC^2$ and $T_2 \in S^{d_{i+1}} \bbC^2 \ootimes S^{d_k} \bbC^2$, then it is clear that $R_{d_1 \vvirg d_k}(T_1 \otimes T_2) \leq R_{d_1 \vvirg d_i}(T_1)\cdot R_{d_{i+1} \vvirg d_k}(T_2)$. It has recently been shown in \cite{cjz} that this inequality can be strict; in this paper we further investigate this strict multiplicativity.

We focus specifically on the tensor $W_d \in S^d \bbC^2 \subseteq (\bbC^2)^{\otimes d}$, which is called $W$-state in the physics literature, and is defined as 
\[
W_d = y \otimes x \ootimes x + x \otimes y \otimes x \ootimes x + \cdots + x \ootimes x \otimes y,
\]
where $\{x,y\}$ is a basis of $\bbC^2$; as a homogeneous polynomial in $x$ and $y$, we have $W_d = x^{d-1}y$; it is known that $R_d(W_d) = R_{1\vvirg 1}(W_d) = d$. The proof that $W_3 \otimes W_3$ has rank less than or equal to $8$ is one of the simplest examples of strict multiplicativity of tensor rank. The general techniques of \cite{cjz} also provide a $O(k 2^k)$ upper bound for the tensor product of $k$ copies of $W_3$. The upper bound of $8$ was later shown to be tight in \cite{cf}, where the upper bound for multiple copies was also improved for values of $k$ up to $9$. With a focus on partially symmetric rank and advanced tools from algebraic geometry, we improve upon these bounds and provide a number other insights on the rank of tensor products of symmetric tensors.

\subsection{Motivations}
Tensor decomposition for structured tensors is a classical topic that has been studied in algebraic geometry at least since the nineteenth century and finds numerous applications in other fields, such as quantum physics and theoretical computer science. We present some of the applications in related fields.

\emph{\underline{Entanglement}.} The Hilbert space of a composite quantum system is the tensor product of the Hilbert spaces of the constituent systems. The Hilbert space of the $N$-body system is obtained as the tensor product of $N$ copies of the $n$-dimensional single particle Hilbert space $\mathcal{H}_1$. In the case of indistinguishable bosonic particles, the totally symmetric states under particle exchange are physically relevant, which amounts to restricting the attention to the subspace $\mathcal{H}_s=S^N \mathcal{H}_1\subset \bigotimes^N \mathcal{H}_1$ of completely symmetric tensors. In case we have two different species of indistinguishable bosonic particles, the relevant Hilbert space is $S^{N_1} \mathcal{H}_1\otimes S^{N_2} \mathcal{H}_2$. Tensor rank is a natural measure of the entanglement of the corresponding quantum state (\cite{YCGD10}, \cite{BernCaru:AlgGeomToolsEntanglement}) and strict submultiplicativity of partially symmetric rank reflects the unexpected fact that entanglement does not simply ``add up'' in the composite system formed by multiple bosonic systems, even if the states $T \in S^{N_1} \mathcal{H}_1\otimes S^{N_2} \mathcal{H}_2$ of the two species is a tensor product $T=T_1 \otimes T_2$, where $T_i \in S^{N_i} \mathcal{H}_i$. The results of this paper expand on this novel quantum effect.

\emph{\underline{Communication Complexity}.} The log-rank of the communication matrix is a lower bound on the deterministic communication complexity (see \cite{MehSch:LasVegasBetter}) and it is an open question whether this bound is tight up to polynomial factors (\cite{LovSaks:LatticesMoebFuncsCommCompl}). Recently, it has been shown that \emph{support tensor rank} equals the non-deterministic multiparty quantum communication complexity in the quantum broadcast model (\cite{BuhChrZui:NondetQCC_CyclicEqGameIMM}). Here, the communicating parties obtain each an input and are asked to compute a Boolean function of the joint input using as little quantum communication as possible. The tensor encodes the Boolean function; the order of the tensor corresponds to the number of parties. Support tensor rank is upper bounded by tensor rank with equality in some cases: for instance, in the case of $W$-states or asymptotically in the equality problem, as a consequence of \cite{CohUma:FastMatMultCohConf}. Playing the game independently in two groups of parties but requiring both games to be won corresponds to the tensor rank of the tensor product of the functions. Strict submultiplicativity shows that one can get a reduction in the communication complexity when the two games are played with a joint strategy. 

\emph{\underline{Algebraic Complexity Theory}.} Tensors in $(\bbC^n)^{\otimes 3}$ encode bilinear operations; Strassen showed that the computational complexity of the bilinear map associated to the tensor $T$ is closely related to the tensor rank (\cite{Strassen:RankOptimalComputationGenericTensors}) and asymptotically it is related to the so-called asymptotic rank $\uwave{R}(T):=\lim_{n \rightarrow \infty }R(T^{\boxtimes n})^{1/n}$, where $T^{\boxtimes k}$ denotes the Kronecker product (or flattened tensor product) of tensors, where the tensor power $T^{\otimes k}$ is regarded as an element of $((\bbC^n)^{\otimes k})^{\otimes 3}$. One is interested in studying the gap between $\uwave{R}(T)$ and $R(T)$: this gap can arise both from the fact that $R(T^{\otimes k})$ can be strictly smaller than $R(T)^k$ (that is strict submultiplicativity) and from the fact that $R(T^{\boxtimes k})$ can be strictly smaller than $R(T^{\otimes k})$ (namely passing to the flattened tensor product). This phenomenon has been studied in \cite{cjz} in the context of submultiplicativity of tensor rank and in \cite{ChrGesJen:BorderRankNonMult} in the context of submultiplicativity of border rank. We believe that better understanding of strict submultiplicativity can lead to useful insights on the asymptotic rank. Moreover $W$-states play an important role in the study of the complexity of matrix multiplication: indeed $W_3$ is ``the outer structure'' of the Coppersmith-Winograd tensor (see \cite{CopperWinog:MatrixMultiplicationArithmeticProgressions}, \cite{BerDalHauMou:TensorDecompHomCont}) on which the most recent results concerning upper bounds on the exponent of matrix multiplication are based (see \cite{Stoth:ComplexityMatrixMultiplication}, \cite{Williams:MultMatricesFasterCW}, \cite{LeGall:PowersTensorsFastMatrixMult})

\emph{\underline{W-states.}} Besides what is mentioned above, $W$-states are of key importance both in algebraic geometry and quantum information theory. We mention that tensors of type $W$ are the simplest examples showing that tensor rank fails to be upper semicontinuous. The study of this phenomenon has a long history: it was known to geometers in the 19th century and was then rediscovered in the 80s (see e.g. \cite{BinLotRom:ApproxSolutionsBilFormCompProb}) when it motivated the introduction of the notion of \emph{border rank}. From the point of view of quantum information theory, the $W$-state $W_3 \in S^3 \bbC^2 \subseteq (\bbC^2)^{\otimes 3}$ is one of the two genuinely multiparty entangled classes, the other one being the so-called GHZ type represented by the cubic $x^3 + y^3$ (see \cite{MR1804183}). If the number of particles is higher than three the situation is more complicated (see Remark \ref{remark: sylvester thm}), and generalizations of the $W$-states (so-called Dicke states, corresponding to the monomials in two variables) play a role as well (see e.g. \cite{HolLuqThi:GeomDescrEntangAuxVars}). Asymptotic entanglement distillation properties of the $W$-states have been investigated in \cite{VraChr:EntanglDistillGHZ}. A random distillation protocol to obtain a maximally entangled pairs from $W_d$ by local operations and classical communication (LOCC) is presented in \cite{FortLo:RandomBipartiteEntFromW,FortLo:RandomPartEntDistMultipartyStates}.

\subsection{Main contributions and structure of the paper}
In Section \ref{section: Preliminaries}, we provide preliminary results that will be useful in the rest of the paper. In Section \ref{section: upper bounds}, we prove several results on the upper bounds of partially symmetric rank, in general (\ref{subsec: generic upper bounds}), for products of $W$-states (\ref{subsec: upper bounds W}) and for other special tensors (\ref{subsec: upper bounds others}). Thm. 3.3  improves the bound of Prop. 13 in \cite{cjz} by roughly a factor of $4$; Eqn. \eqref{eqn: R333 expression for W3 tatata W3} in the proof of Thm. 3.3 answers the problem raised in Open Problems 16.5 of \cite{cf}. Similarly, Thm. \ref{bo1} (and in particular Cor. \ref{corol: bound for maximal rank tensors}) improves the bound of Prop. 13 in \cite{cjz} by roughly a factor of $2$. In Section \ref{section: lower bounds}, we give lower bounds on the partially symmetric rank. The bound of Prop. \ref{lower1} compares to the one of Thm. 11 in \cite{Zuid:NoteGapRankBorderRank}, which in turn, when the $d_j$ are not all the same, applies with $d = \min_j \{ d_j\}$ and gives $R_{d_1\vvirg d_k}(W_{d_1} \ootimes W_{d_k}) \geq 2^k (d-1) -d +2$ as in \cite{CheChiDuaJiWin:TensorRankStocEntCatalysisMultPureStates}: for every given values of the parameters, it is straightforward to verify which of the two bounds is better but it is not easy to provide exact conditions; for instance, when $k = 2$, and $d_2 \geq 2d_1+2$, then the bound of Prop. \ref{lower1} improves the one of \cite{CheChiDuaJiWin:TensorRankStocEntCatalysisMultPureStates}; more in general we can observe that the bound of this paper is better when few of the $d_j$'s are much larger than the others. Prop. \ref{prop: multiplicative with W2} partially answers Open Problems 16.1 of \cite{cf}.  Section \ref{section: uniqueness} is dedicated to results on the set of rank one tensors, and more generally on the zero-dimensional scheme supported at a set of rank $1$ tensors, that spans a given partially symmetric tensor. Thm. \ref{thm: Z unique} in the case $k \geq 2$ is original to the extent of our knowledge. Finally, the Appendix (Section \ref{appendix}) contains a brief discussion on the classical Sylvester's Theorem for binary forms (\cite{Sylvester1852}) which inspires most of the techniques used in the rest of the paper, some results about flattening techniques, which are useful tools for lower bounds on several notions of rank and an example giving some insight on the subtleties of zero-dimensional schemes minimally spanning a point.

\subsection*{Acknowledgments} The present paper was conceived during the International workshop on ``Quantum Physics and Geometry'' (Trento, Italy, July 2017) for which we thank CIRM, GNSAGA of INDAM, Mathematical Department of University of Trento, BEC Center, Siquro Project, AQS Project, INFN and TIFPA for their financial support. E.B. is partially supported by GNSAGA of INDAM (Italy) and MIUR PRIN 2015 ``Geometria delle variet\`a algebriche''. A.B. acknowledges support from MIUR PRIN 2015 ``Geometria delle variet\`a algebriche''. M.C. and F.G. acknowledge financial support from the European Research Council (ERC Grant Agreement no. 337603), the Danish Council for Independent Research (Sapere Aude), and VILLUM FONDEN via the QMATH Centre of Excellence (Grant no. 10059). We thank the anonymous referee for useful comments and suggestions.

\section{Notation and Preliminaries}\label{section: Preliminaries} If $V$ is a vector space, $\bbP V$ denotes the projective space of lines in $V$; if $v \in V$, we denote by $[v]$ the corresponding point in $\bbP V$. If $V = \bbC^{n+1}$, we write $\bbP^n = \bbP \bbC^{n+1}$. We often identify $\bbC^2$ with the space of complex linear forms in two variables; in this case we endow $\bbC^2$ with a basis $\{x,y\}$ and $\bbC^{2*}$ with a dual basis $\{ \partial_x,\partial_y\}$. If $X \subseteq \bbP^n$ is a projective variety (or a scheme), we denote by $I_X \subseteq \Sym (\bbC^{n+1 *})$ its homogeneous ideal, where $\Sym(\bbC^{n+1 *})$ denotes the algebra of polynomials on $\bbC^{n+1}$. The span of a variety (or a scheme) $X$ is the variety cut out by the homogeneous component of degree $1$ in $I_X$, namely $(I_X)_1$; it is a projective subspace of $\bbP V$ and, in fact, it is the smallest projective subspace of $\bbP V$ containing $X$.

We refer to \cite{EisHar:GeometrySchemes} for basics on zero-dimensional schemes. Informally, a zero-dimensional scheme can be thought as a set of distinct points each of which has a multiplicity structure arising from the intersection degrees of the hypersurfaces cutting out the point locally. In general, a zero-dimensional scheme is described by the ideal that cuts it out. For instance, on $\bbP^1$, we describe the zero-dimensional scheme $Z$ supported at $[x]$ with multiplicity $2$ by saying that it is the scheme cut out by the ideal $I_Z = ( \partial_y^2)$. Similarly, setting $\bbC^3 = \langle x,y,z\rangle$, the ideal $(\partial_x, \partial_y)^2 \subseteq \bbC[\partial_x,\partial_y,\partial_z]$ cuts out a zero-dimensional scheme $B$ supported at $[z]$ which can be pictured as a point such that the intersection with every line through it is the zero-dimensional scheme $Z$ of degree $2$ described above; $B$ is a zero-dimensional scheme of degree $3$ and following A.V. Geramita it is usually referred to as \emph{fat point} (see \cite{Gera:InvSysFatPts}). If $A \subseteq \bbP^n$ is a zero-dimensional scheme, we say that $A$ is \emph{linearly independent} if $\dim \langle A \rangle = \deg(A) - 1$. Here the dimension is projective.

Given a nondegenerate variety $X \subseteq \bbP^N$ and a point $p\in \bbP^N$, we define the $X$-rank of $p$, denoted $R_X(p)$, to be the minimum $r$ such that $p \in \sigma_r^\circ(X) := \bigcup_{q_1 \vvirg q_r}\langle q_1 \vvirg q_r\rangle$. The $X$-border rank of $p$, denoted $\underline{R}_X(p)$, is the minimum $r$ such that $p$ is the limit of points of $X$-rank $r$, or equivalently $p \in \sigma_r(X) = \overline{\sigma_r^\circ(X)}$, where the overline denotes the Euclidean (or equivalently Zariski) closure.

For $d_1 \vvirg d_k \in \bbN$,  the map
\begin{align*}
 \nu_{d_1 \vvirg d_k} : (\bbP ^{1})^k  &\to  \mathbb{P}(S^{d_1}\mathbb{C}^{2}\otimes \cdots \otimes S^{d_k}\mathbb{C}^{2}   ) \\                                                                                                                                  
 ([v_1] \vvirg [v_k] ) &\mapsto [v_1^{\otimes d_1} \ootimes v_k^{\otimes d_k}],
 \end{align*}
is called the \emph{Segre-Veronese} embedding of $k$ copies of $\mathbb{P}^1$'s in multidegree $(d_1 \vvirg d_k)$. The image of $\nu_{d_1 \vvirg d_k}$ is an algebraic variety, called the Segre-Veronese variety of multidegree $(d_1 \vvirg d_k)$, denoted by $\scrV_{d_1 \vvirg d_k}$. If $k = 1$, $\scrV_{d_1}$ is the $d_1$-th rational normal curve. If $d_1 = \cdots = d_k = 1$, $\scrV_{1 \vvirg 1}$ is the Segre variety of rank $1$ tensors of format $(2 \vvirg 2)$. 

In this setting, the partially symmetric rank of $T$ defined in Section \ref{section: intro} is the $X$-rank where $X = \nu_{d_1 \vvirg d_k} ( (\bbP^1)^k)$. For $T \in S^{d_1} \bbC^2 \ootimes S^{d_k} \bbC^2$, we denote by $R_{d_1 \vvirg d_k}(T)$ (resp. $\underline{R}_{d_1 \vvirg d_k}(T)$) the partially symmetric rank (resp. partially symmetric border rank) of $T$.

We will extensively use the following notion of rank (see e.g. (\cite{ransch,br, BuczBucz:SecantsVeroneseCataSmoothGorSch}). The \emph{$X$-cactus rank} of $p$ is the minimum integer $r$ such that there exists a zero-dimensional scheme $Z \subseteq X$ of degree $r$ with $p \in \langle Z \rangle$ (particular care should be taken if one works with singular varieties -- we will only deal with cases where $X$ is a smooth variety). In this case, we write $c_{X}(p) = r$. Clearly $c_X(p) \leq R_X(p)$. If $X = \nu_{d_1 \vvirg d_k} ( ( \bbP^1)^k)$, we write $c_{d_1 \vvirg d_k} = c_X$.

For a variety $X \subseteq \bbP^n$ and a point $p \in \bbP^n$, we say that a zero-dimensional scheme (resp. a set of distinct points) $A \subseteq X$ \emph{evinces} or \emph{computes} the $X$-cactus rank (resp. the $X$-rank) of $p$ if $\deg(A) = c_X(p)$ (resp. $\deg(A) = r_X(p)$) and $p \in \langle A \rangle$. If $X$ is the image of an embedding $\tilde{X } \to \bbP^n$, we will refer to zero-dimensional schemes in $\tilde{X}$ with the same terminology, referring to the image of the subscheme in the embedding.

We refer to Ch. II and Ch. III in \cite{hart} for an extensive presentation of the theory of sheaf cohomology and its consequences. Given a variety $X$, and a line bundle $\calL$ on $X$, we write $H^k(\calL)$ for the (global) sheaf cohomology groups of $\calL$ and $h^k(\calL)$ for their dimensions. We write $\vert \calL \vert = \bbP (H^0(\calL))$ and we identify it with the space of divisors defined by the sections of $\calL$: in particular, we identify $D \in  \vert \calL \vert$ with the codimension one subscheme defined by its zero locus in $X$. The base locus of $\calL$ is the intersection of the zero loci of all the elements of $\vert \calL \vert$ (\cite{hart}, p.158).

\subsection*{Notation}
Let $X = (\bbP^{1})^k$. Define
 \begin{align*}
  \pi_j :& X \to \bbP^{1}_j \\ 
  \mu_j:& X \to \bbP^{1}_1 \ttimes \bbP^{1}_{j-1} \times \bbP^{1}_{j+1} \ttimes \bbP^{1}_k
 \end{align*}
to be respectively the projection on the $j$-th factor and the projection on all but the $j$-th factor.

The point  $[x] \in \bbP^1$ will be denoted by $o_1$, i.e. $o_1:=[(1,0)]\in \mathbb{P}(\mathbb{C}^2)$ in the basis $\{x,y\}$ of $\bbC^2$. Its defining ideal is $I_{o_1} = (\partial_y) \subseteq Sym(\bbC^{2*}) = \mathbb{C}[\partial_x,\partial_y]$. We denote by $Z_1\subset \mathbb{P}^1$ the zero-dimensional scheme supported at $o_1$ with degree $2$, namely $I_{Z_1} = (\partial_y^2)$. We write $o_k = (o_1 \vvirg o_1) \in (\bbP^1)^k$ and $Z_k = Z_1 \ttimes Z_1 \subseteq (\bbP^1)^k$. We will drop the index $k$ from the notation if it is not essential in the discussion. The double point supported at the point $o$ is denoted by $2o$, which is the zero-dimensional scheme whose ideal is the square of the maximal ideal defining $o$. Notice that the double point $2o_k$ is contained in $Z_k$ but equality only holds for $k=1$. We denote by $L_i \in \vert \calO_{(\bbP^1)^k} ( 0 \vvirg 0, 1,0 \vvirg 0) \vert$ (where $1$ is at the $i$-th entry) the unique divisor with $o \in L_i$; as a subvariety of $(\bbP^1)^k$, we have $L_i = \pi_i^{-1}(o_1)$. We denote by $Q_i := L_i^2 \in \vert \calO_{(\bbP^1)^k}( 0 \vvirg 0 ,2,0\vvirg 0) \vert$, namely $Q_i = \pi_i^{-1} (Z_1)$. We have $Z_k \subseteq Q_i$ and indeed $Z_k = \bigcap_{1 }^k Q_i$.

\subsection{Cactus rank of product of $W$-states}

We briefly recall the following immediate result, which will be used extensively throughout the paper.

First, recall that $W_d \in \langle \nu_d (Z_1) \rangle$. Indeed, using coordinates $\zeta_0 \vvirg \zeta_d$ on $S^d \bbC^2$ ($\zeta_j$ being the coefficient of $x^{d-j}y^j$), the ideal of $\nu_d(Z_1)$ is $I_{\nu_{d}(Z_1)} = (\zeta_1^2,\zeta_2 \vvirg \zeta_d)$; therefore the span of $\nu_d(Z_1)$ is the line cut out by the linear equations in $I_{\nu_d(Z_1)}$, namely $(\zeta_2 \vvirg \zeta_d)$, which is the line parametrized by $\zeta_0,\zeta_1$, $L = \{ \zeta_0 x^d + \zeta_1 W_d\}$; indeed $L$ contains $W_d$. This shows that $c_d(W_d) \leq  2$. Since $W_d \notin \scrV_d$, we have $c_d(W_d) =  2$.

The following result determines the cactus rank of the product of copies of $W$-states, using standard flattening methods (see the Appendix \ref{appendix: flattenings} for details). 

\begin{lemma}\label{lemma: cactus of Wd1 tatata Wdk}
Let $T = W_{d_1} \ootimes W_{d_k} \in S^{d_1} \bbC^2 \ootimes S^{d_k} \bbC^2$ for some nonnegative integers $d_1 \vvirg d_k$. Then $c_{d_1 \vvirg d_k} (T) = 2^k$ and $T \in \langle \nu_{d_1 \vvirg d_k} (Z_k) \rangle$
\end{lemma}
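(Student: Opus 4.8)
The plan is to prove the two inequalities $c_{d_1 \vvirg d_k}(T) \leq 2^k$ and $c_{d_1 \vvirg d_k}(T) \geq 2^k$ separately, with the scheme realizing the upper bound being $\nu_{d_1 \vvirg d_k}(Z_k)$; since $Z_k = Z_1 \ttimes Z_1$ and each factor has degree $2$, we have $\deg(Z_k) = 2^k$, so the claim about the spanning scheme follows simultaneously.

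For the upper bound, the first step I would take is to establish the compatibility of the span operation with products. Namely, for zero-dimensional schemes $Z^{(j)} \subseteq \bbP^1$ I would show
\[
\langle \nu_{d_1 \vvirg d_k}(Z^{(1)} \ttimes Z^{(k)}) \rangle = \langle \nu_{d_1}(Z^{(1)}) \rangle \otimes \cdots \otimes \langle \nu_{d_k}(Z^{(k)}) \rangle .
\]
This is a K\"unneth-type statement: writing $\calL = \calO_{(\bbP^1)^k}(d_1 \vvirg d_k) = \calO(d_1) \boxtimes \cdots \boxtimes \calO(d_k)$ and $Z = Z^{(1)} \ttimes Z^{(k)}$, the structure sheaf factors as $\calO_Z = \calO_{Z^{(1)}} \boxtimes \cdots \boxtimes \calO_{Z^{(k)}}$, so the restriction map $H^0(\calL) \to H^0(\calO_Z \otimes \calL)$ is the tensor product of the single-factor restriction maps; since the span of $\nu_\calL(Z)$ is the image of the dual map, it decomposes as the tensor product of the spans. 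Applying this with $Z^{(j)} = Z_1$ for all $j$, and recalling from the discussion preceding the lemma that $W_{d_j} \in \langle \nu_{d_j}(Z_1)\rangle = \langle x^{d_j}, x^{d_j-1}y\rangle$, I conclude that $T = W_{d_1} \ootimes W_{d_k}$ lies in $\langle \nu_{d_1 \vvirg d_k}(Z_k)\rangle$, which gives $c_{d_1 \vvirg d_k}(T) \leq 2^k$.

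For the lower bound, I would invoke the flattening (catalecticant) techniques recalled in Appendix \ref{appendix: flattenings}, which bound the cactus rank from below. For a single $W$-state, the degree-one catalecticant $\partial_x, \partial_y \mapsto \partial_\bullet \lrcorner (x^{d-1}y)$ has image spanned by the linearly independent forms $x^{d-1}$ and $(d-1)x^{d-2}y$, hence has rank $2$. For $T$, the corresponding multilinear flattening $\bbC^{2*} \ootimes \bbC^{2*} \to S^{d_1-1}\bbC^2 \ootimes S^{d_k-1}\bbC^2$ is precisely the tensor product of these single-factor maps, so its rank equals the product of the individual ranks, namely $2^k$. As flattening rank is a lower bound for cactus rank, this yields $c_{d_1 \vvirg d_k}(T) \geq 2^k$, and combining with the upper bound completes the proof.

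The hard part will be the careful justification of the span-of-product identity in the first step: one must verify the factorization of the restriction map on global sections and check that no additional linear relations are introduced by passing to the product scheme (equivalently, that the natural map to the tensor product of spans is an isomorphism). The flattening computation, by contrast, I expect to be routine, once the multiplicativity of rank under tensor products of linear maps and the fact that $d_j \geq 2$ are used to guarantee rank $2$ in each factor.
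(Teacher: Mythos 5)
Your proposal is correct and follows essentially the same route as the paper: the upper bound via the factorization $\langle \nu_{d_1 \vvirg d_k}(Z_k)\rangle = \langle \nu_{d_1}(Z_1)\rangle \ootimes \langle \nu_{d_k}(Z_1)\rangle$ combined with $W_{d_j} \in \langle \nu_{d_j}(Z_1)\rangle$, and the lower bound via the $(1 \vvirg 1)$-flattening $S^1\bbC^{2*}\ootimes S^1\bbC^{2*} \to S^{d_1-1}\bbC^2 \ootimes S^{d_k-1}\bbC^2$, whose rank $2^k$ bounds the cactus rank from below. The only difference is that you spell out details the paper leaves to the reader (the K\"unneth-type justification of the span identity and the multiplicativity of flattening rank under tensor product), which is a harmless refinement rather than a different approach.
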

\begin{proof}
Consider the flattening map:
\begin{align*}
T_{1 \vvirg 1} : S^1\bbC^{2*} \ootimes S^1\bbC^{2*} &\to S^{d_1 - 1} \bbC^2 \ootimes S^{d_k -1} \bbC^2 \\
D &\mapsto D(T)
 \end{align*}
where $ S^1\bbC^{2*} \ootimes S^1\bbC^{2*}$ acts naturally on $S^{d_1} \bbC^2 \ootimes S^{d_k} \bbC^2$ by component-wise contraction.  One can verify that this map is injective, namely $\rank (T_{1 \vvirg 1}) \geq 2^k$. It is classically known that the rank of this map gives a lower bound on $c_{d_1 \vvirg d_k}(T)$ (see also \cite{berbramou} and \cite{gal}) providing $c_{d_1 \vvirg d_k}(T) \geq 2^k$.

On the other hand, we have
\[
\langle \nu_{d_1 \vvirg d_k}(Z_k) \rangle = \langle \nu_{d_1 \vvirg d_k} (Z_1 \ttimes Z_1)  \rangle = \langle \nu_{d_1} (Z_1) \rangle \ootimes \langle \nu_{d_k}(Z_1) \rangle;
\]
therefore $T \in \langle \nu_{d_1 \vvirg d_k}(Z_k) \rangle$. Since $\deg(Z_k) = 2^k$, we have $c_{d_1 \vvirg d_k}(T) \leq 2^k$ and we conclude.
\end{proof}

Lemma \ref{lemma: cactus of Wd1 tatata Wdk} shows that $Z_k$ is a minimal zero-dimensional scheme such that $W_{d_1} \ootimes W_{d_k} \in \langle \nu_{d_1 \vvirg d_k} (Z_k) \rangle$. In particular, no proper subscheme $Z \subsetneq Z_k$ satisfies $ W_{d_1} \ootimes W_{d_k} \in \langle \nu_{d_1 \vvirg d_k} (Z) \rangle$. In fact, Theorem \ref{thm: Z unique} will show that $Z_k$ is the unique zero-dimensional scheme evincing the cactus rank of $W_{d_1} \ootimes W_{d_k}$.

\subsection{Two useful exact sequences and their consequences}

This section has the double purpose to state some known results in the language that will be used in the rest of the paper and to introduce some tools and preliminary results that will be useful in the next sections.

Most of the arguments that we will use follow from the study of the long exact sequence in cohomology arising from an exact sequence of sheaves. 

Let $X$ be a variety and $Y \subseteq X$ a subscheme.  We write $\calI_{Y,X}$ for the ideal sheaf of $Y$ in $\calO_{X}$; we write $\calI_Y$ if no confusion arises. Then the following sequence (called the \emph{restriction exact sequence} of $Y$) is exact
\begin{equation}\label{eqn: restriction exact sequence}
0 \to \calI_Y \to \calO_X \to \calO_X \vert _Y \to 0.
\end{equation}
We will use this exact sequence several times, often tensoring it with a line bundle $\calL$ on $X$. The restriction map $\sigma^X_Y : H^0(\calL) \to H^0(\calL\vert _Y)$ appears in the resulting long exact sequence in cohomology
\[
 0 \to H^0( \calI_Y \otimes \calL) \to H^0(\calL) \xto{\sigma^X_Y} H^0(\calL\vert_Y) \to H^1(\calI_Y \otimes \calL) \to H^1(\calL) \to \cdots .
\]
We obtain immediately that if $h^1(\calL) = 0$ then $h^1(\calI_Y \otimes \calL) = h^0 (\calL\vert _Y) - \dim (\mathrm{Im} (\sigma^X_Y) )$. In particular, if $Y$ is zero-dimensional, then $h^1(\calI_Y \otimes \calL) = \deg(Y) - \dim (\mathrm{Im} (\sigma^X_Y))$.

This has the following two easy but important consequences (if $Y_1 \subseteq Y_2 \subseteq X$ we write $\calI_{Y_1,Y_2} = \calI_{Y_1} \vert_{Y_2}$): 

\begin{remark}\label{inclusion}
Let $Y_1 \subseteq Y_2  \subseteq X$. We have $\sigma^X_{Y_1} = \sigma^{Y_2}_{Y_1} \circ \sigma^X_{Y_2}$, so $\dim ( \mathrm{Im} (\sigma^X_{Y_1}) ) \leq \dim ( \mathrm{Im} (\sigma^{Y_2}_{Y_1}))$. If $h^1(\calL) = h^1(\calL \vert_{Y_2}) = 0$, we obtain $h^1(\calI_{Y_1} \otimes \calL) =  h^0 (\calL\vert _{Y_1}) - \dim (\mathrm{Im} (\sigma^X_{Y_1}) ) \geq h^0 (\calL\vert _{Y_2}) - \dim (\mathrm{Im} (\sigma^{Y_2}_{Y_1}) )  = h^1(\calI_{Y_1,Y_2} \otimes \calL \vert_{Y_2}) $. In summary $h^1(\calI_{Y_1} \otimes \calL) \geq h^1(\calI_{Y_1,Y_2} \otimes \calL \vert_{Y_2})$.
\end{remark}

\begin{remark}\label{lemma: h1 - h1 vs deg - deg}
Let $A,B$ be zero-dimensional schemes in $X$ with $A \subseteq B$ and let $\calL$ be a line bundle on $X$ with $h^1(\calL) = 0$. Then
 \[
  0 \leq h^1 (\calI_B \otimes \calL) - h^1 (\calI_A \otimes \calL) \leq \deg(B) - \deg(A).
 \]
\end{remark}

Let $X$ be a variety, $A \subseteq X$ a zero-dimensional scheme and $D \subseteq X$ an effective Cartier divisor. The following sequence (called the \emph{residual exact sequence} of $A$ with respect to $D$ in $X$) is exact:
\begin{equation}\label{eqn: residual exact sequence general}
 0 \to \calI_{\Res_D(A)} \otimes \calI_D \to \calI_A \to \calI_{D \cap A, D} \to 0.
\end{equation}
Here $\Res_D(A)$ is the \emph{residue scheme} of $A$ with respect to $D$, namely the subscheme of $X$ whose ideal sheaf is $\calI_A : \calI_D$. By definition,
\begin{equation}\label{eqn: degree as residue plus intersection}
 \deg(A) = \deg( A \cap D) + \deg(\Res_D(A)).
\end{equation}
Moreover, it is immediate that if $A,B$ are two zero-dimensional schemes, then $\Res_D(A \cup B) = \Res_D(A) \cup \Res_D(B)$. Figure \ref{figure: residues} represents an example of zero-dimensional scheme $A$, with a divisor $D$ on a plane.

\begin{figure}[!htp]
\begin{tikzpicture}
    \node[anchor=south west,inner sep=0] at (0,0) {\includegraphics[width=.7\textwidth]{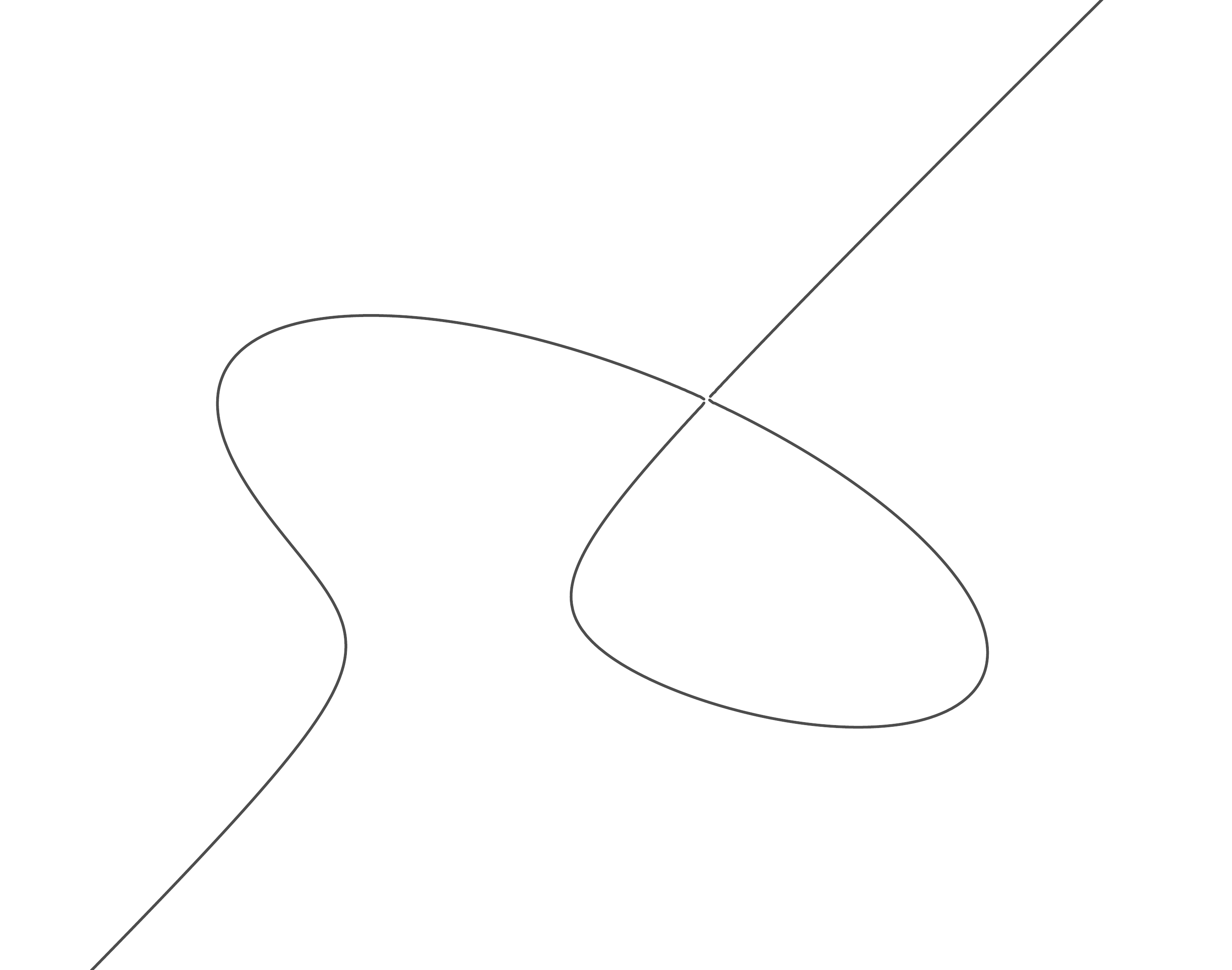}};
    \node at (6.05,4.85)  (p1)  {};
    \node at (2.91,3)  (p2)  {};
    \node at (8.05,6.93) (p3)  {};
    \node at (3,7)  (p4)  {};
    \node at  (7,3.5)  (p5)  {};
    \draw[blue,fill=blue] (p1) circle (2pt);
    \draw[blue,fill=blue] (p2) circle (2pt);
    \draw[red,fill=red] (p3) circle (1.5pt);
    \draw[blue,fill=blue] (p4) circle (2pt);
    \draw[red,fill=red] (p5) circle (1.5pt);
    \draw[dashed] (p1) -- ($ (p1) + (3,0)$);
    \draw[dashed] (p2) -- ($ (p2) - (2,1)$);
    \draw[dashed] (p3) -- ($ (p3) - (1.7,0)$);
    \draw[dashed] (p4) -- ($ (p4) + (1.5,0)$);
    \draw[dashed] (p5) -- ($ (p5) - (0,2.5)$);
\node[right=3pt of {$ (p1) + (3,0)$}] {$p_2$};
\node[left=1pt of {$ (p2) - (2,1)$}] {$p_1$};
\node[left=2pt of {$ (p3) - (1.5,0)$}] {$p_3$};
\node[right=1pt of {$(p4) + (1.5,0)$}] {$p_4$};
\node[right=0pt of {$(p5) - (0,2.5)$}] {$p_5$};
\end{tikzpicture}
{\scriptsize \caption{The zero-dimensional scheme $A$ consists of the points in the picture: red points are simple, blue points are double; in particular $\deg(A) = 3 + 3+ 1 + 3+ 1 = 11$. The divisor $D$ is represented by the singular curve. $D \cap A$ is the zero-dimensional scheme consisting of a point of multiplicity $2$ at $p_1$ (the double point on the tangent line at $p_1$), the fat double $p_2$ and the point $p_3$. $\Res_D(A)$ is the zero-dimensional scheme consisting of a simple point at $p_1$, the double point at $p_4$ and the simple point $p_5$.}\label{figure: residues}} 
\end{figure}

We rephrase the following result into our language.

\begin{lemma}[Lemma 5.1, item (b), \cite{bb2}]\label{lemma: 5.1 in our form}
 Let $X\subset \PP^n$ be an irreducible variety. Let $p\in \PP^n$, let $S \subset X$ be a finite set and $A$ a zero-dimensional scheme such that $p\in \langle A\rangle \cap \langle S\rangle$, $S \neq A$, and $p\notin \langle A'\rangle $ for any $A'\subsetneq A$. Let $D \subset X$ be an effective Cartier divisor. If $h^1(\Ii _{\Res _D(A\cup S)}(1)\otimes \calI_D) =0$. Then $\mathrm{Res}_ D(A) = \Res_D(S)$.
\end{lemma}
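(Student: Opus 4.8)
The plan is to push the problem off $\PP^n$ and onto the divisor $D$ by means of the residual exact sequence, and then to play the minimality of $A$ against the two span conditions $p\in\langle A\rangle$ and $p\in\langle S\rangle$. First I would set $W=A\cup S$ and record, using the additivity of the residue noted right after \eqref{eqn: degree as residue plus intersection}, that $\Res_D(W)=\Res_D(A)\cup\Res_D(S)$; in particular $\Res_D(A)$ and $\Res_D(S)$ are both subschemes of $\Res_D(W)$, so the desired equality $\Res_D(A)=\Res_D(S)$ is equivalent to $\Res_D(A)=\Res_D(W)=\Res_D(S)$. Tensoring the residual exact sequence \eqref{eqn: residual exact sequence general} for $W$ by $\calO(1)$ and passing to the long exact cohomology sequence, the hypothesis $h^1(\calI_{\Res_D(W)}\otimes\calI_D(1))=0$ forces the restriction map $H^0(\calI_W(1))\to H^0(\calI_{W\cap D,D}(1))$ to be surjective and the connecting map to give an injection $H^1(\calI_W(1))\hookrightarrow H^1(\calI_{W\cap D,D}(1))$. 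Informally, every failure of $W$ to impose independent conditions is already visible on $D$, while $\Res_D(W)$ imposes independent conditions once twisted by $\calI_D$. Since the $h^1$ of an ideal sheaf twisted by a fixed line bundle is monotone under inclusions of zero-dimensional subschemes (the relevant quotient being supported on a finite set), the same vanishing descends to the pieces: $h^1(\calI_{\Res_D(A)}\otimes\calI_D(1))=h^1(\calI_{\Res_D(S)}\otimes\calI_D(1))=0$.

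The heart of the argument is then a separation step by contradiction. Suppose $\Res_D(A)\neq\Res_D(S)$; since both are contained in $\Res_D(W)$, one of them carries a local condition not carried by the other, say $\Res_D(A)\not\subseteq\Res_D(S)$. Using the independence of the conditions of $\Res_D(W)$ modulo $D$ established above, I would produce a section of $\calO(1)$ vanishing on $D$ and on $\Res_D(S)$ but not on all of $\Res_D(A)$, and then, via the surjectivity of the restriction onto the $D$-part, upgrade it to a genuine linear form whose zero hyperplane either contains $S$ while missing $p$ — contradicting $p\in\langle S\rangle$ — or contains a proper subscheme $A'\subsetneq A$ with $p\in\langle A'\rangle$, contradicting the minimality hypothesis $p\notin\langle A'\rangle$ for $A'\subsetneq A$. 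The opposite containment $\Res_D(S)\not\subseteq\Res_D(A)$ is treated symmetrically, exchanging the roles of the two span conditions. Either way we reach a contradiction, and hence $\Res_D(A)=\Res_D(S)$.

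The main obstacle is precisely this last step: converting the cohomological statement ``$\Res_D(W)$ imposes independent conditions modulo $D$'' into an actual hyperplane of $\PP^n$ separating $p$ from one of $A$ or $S$, while respecting the asymmetry of the hypotheses — minimality is available for $A$ only, whereas $S$ is merely a finite set. The delicate point is to route the separating form through $D$, controlling simultaneously the trace of $A$ (resp. $S$) on $D$ and its residue, so that the lift supplied by the surjective restriction map does not inadvertently vanish at $p$; it is exactly here that the minimality $p\notin\langle A'\rangle$ for $A'\subsetneq A$, rather than any property of $S$, must be invoked to close the contradiction.
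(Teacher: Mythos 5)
Your first paragraph (residual sequence, surjectivity onto the trace, monotonicity of $h^1$ under inclusion) is correct but purely preparatory; the proof has to happen in the ``separation step,'' and that step fails on both horns. Note first that the paper itself offers no proof of this statement: it is quoted from \cite{bb2} and never used afterwards; the in-paper analogue that \emph{is} proved (and used) is Lemma~\ref{lemma: 5.1 plus}, whose argument is a dimension count, not a separating hyperplane. Concerning your dichotomy: the horn ``the hyperplane contains $S$ while missing $p$'' can never occur, because $S$ is reduced, so $S\subseteq D\cup\Res_D(S)$ as sets; hence any section of $\calO(1)$ vanishing on $D$ and on $\Res_D(S)$ vanishes on all of $S$, and its hyperplane $H'$ contains $\langle S\rangle$ and in particular $p$. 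So all the force must come from the $A$-side, and there your argument rests on the inference ``$p\in\langle A\rangle$, $p\in H'$, $A\cap H'\subsetneq A$, therefore $p\in\langle A\cap H'\rangle$,'' which is false: take $A$ to be two points, $p$ a third point on the line they span, and $H'$ a hyperplane through $p$ and exactly one of the two points. You explicitly flag this as ``the delicate point'' but supply no mechanism to close it; the mechanism that actually works is the one in the proof of Lemma~\ref{lemma: 5.1 plus} (and in \cite{bb2}): compare $\langle A\rangle\cap\langle S\rangle$ with $\langle A\cap D\rangle\cap\langle S\cap D\rangle$ via Grassmann's formula, use the residual exact sequence together with Remark~\ref{inclusion} to show these two linear spaces coincide, and only then invoke minimality.

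The claim that the opposite containment ``is treated symmetrically'' is also not available, because the hypotheses are not symmetric: minimality is assumed for $A$ only. In fact, with the hypotheses exactly as transcribed, the case $\Res_D(S)\not\subseteq\Res_D(A)$ is genuinely false, so no argument can close it. Take $X=\nu_3(\PP^1)\subset\PP^3$ the twisted cubic, $p$ a general point (so $R_X(p)=2$), $A=S_0$ a set of two points with $p\in\langle S_0\rangle$, $S=S_0\cup\{q\}$ for a further point $q$ of $X$, and $D$ a single point of $X$ away from $A\cup S$. Then all the stated hypotheses hold: $A$ minimally spans $p$, $p\in\langle S\rangle$, $S\neq A$, and $\Res_D(A\cup S)=A\cup S$ consists of three points, so $\calI_{\Res_D(A\cup S)}(1)\otimes\calI_D\simeq\calO_{\PP^1}(3-3-1)=\calO_{\PP^1}(-1)$ has $h^1=0$; yet $\Res_D(A)=S_0\neq S_0\cup\{q\}=\Res_D(S)$. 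The original lemma in \cite{bb2} also assumes $p\notin\langle S'\rangle$ for every $S'\subsetneq S$, and that is precisely the hypothesis your symmetric step would need; any correct proof must invoke minimality on \emph{both} sides, so a strategy designed around ``minimality is available for $A$ only'' cannot succeed.
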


We prove a result similar to Lemma \ref{lemma: 5.1 in our form} that will be particularly useful in the next sections.

\begin{lemma}\label{lemma: 5.1 plus}
 Let $X \subseteq \bbP^n$ be an irreducible variety. Let $p\in \PP^n$ and let $A,B$ be zero-dimensional schemes in $X$ such that $p \in \langle A \rangle$, $p \in \langle B \rangle$ and there are no $A' \subsetneq A$ and $B' \subsetneq B$ with $p \in \langle A'\rangle$ or $p \in \langle B' \rangle$. Suppose $h^1(\calI_{B}(1)) = 0$. Let $D \subseteq X$ be an effective Cartier such that $\Res_D(A) \cap \Res_D(B) = \emptyset$. If $h^1 (\calI_{\Res_D(A \cup B)} (1) \otimes \calI_D) = 0$ then $A \cup B \subseteq D$.
\end{lemma}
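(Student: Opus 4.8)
The plan is to reduce the whole statement to the single claim that $\Res_D(A) = \Res_D(B)$, and then to exploit the disjointness hypothesis to finish. If $A = B$, then $\Res_D(A)\cap \Res_D(B) = \Res_D(A) = \emptyset$, so $A\cup B = A \subseteq D$ and there is nothing to prove; hence assume $A \neq B$. Granting momentarily the equality $\Res_D(A) = \Res_D(B) =: R_0$, the hypothesis $\Res_D(A)\cap \Res_D(B) = \emptyset$ reads $R_0 \cap R_0 = R_0 = \emptyset$, so $\Res_D(A) = \Res_D(B) = \emptyset$, i.e. $A \subseteq D$ and $B \subseteq D$, whence $A \cup B \subseteq D$. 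Thus everything reduces to proving $\Res_D(A) = \Res_D(B)$. As a preliminary, since $\Res_D(A)$ and $\Res_D(B)$ are subschemes of $\Res_D(A\cup B) = \Res_D(A)\cup \Res_D(B)$ and $h^1$ is monotone under passing to subschemes after twisting by the locally free sheaf $\calO_X(1)\otimes \calI_D$, the hypothesis $h^1(\calI_{\Res_D(A\cup B)}(1)\otimes \calI_D) = 0$ already yields $h^1(\calI_{\Res_D(A)}(1)\otimes \calI_D) = h^1(\calI_{\Res_D(B)}(1)\otimes \calI_D) = 0$.

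The first genuine step is to produce a cohomological obstruction on $W := A\cup B$. I would record the standard fact that $h^1(\calI_W(1)) > 0$: if instead $h^1(\calI_W(1)) = 0$, then $W$ is linearly independent, and linear independence forces $\langle A\rangle \cap \langle B\rangle = \langle A\cap B\rangle$; the minimality of $A$ and $B$ gives $A \not\subseteq B$ and $B\not\subseteq A$ (a proper containment together with $p \in \langle A\rangle$ would violate the minimality of the larger scheme), so $A \cap B \subsetneq A$ while $p \in \langle A\cap B\rangle$, contradicting the minimality of $A$. Feeding $h^1(\calI_W(1)) > 0$ into the residual exact sequence \eqref{eqn: residual exact sequence general} of $W$ with respect to $D$, twisted by $\calO_X(1)$,
\[
0 \to \calI_{\Res_D(W)}(1)\otimes \calI_D \to \calI_W(1) \to \calI_{W\cap D, D}(1) \to 0,
\]
and using $h^1(\calI_{\Res_D(W)}(1)\otimes \calI_D) = 0$, the long exact sequence produces an injection $H^1(\calI_W(1)) \hookrightarrow H^1(\calI_{W\cap D, D}(1))$. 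Hence the trace $W\cap D = (A\cap D)\cup (B\cap D)$ fails to impose independent conditions on $\calO_D(1)$.

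The heart of the argument, and the step I expect to be the main obstacle, is to upgrade this trace dependency into the equality $\Res_D(A) = \Res_D(B)$. This is exactly the conclusion of Lemma \ref{lemma: 5.1 in our form} with $B$ in the role of the finite set $S$; the only feature of that lemma not literally available here is the reducedness of the second scheme. My plan is to rerun that argument, verifying that reducedness of $S$ there is used only to guarantee the linear independence of $S$, of its trace $S\cap D$ and of its residue $\Res_D(S)$, and that in the present setting this independence is supplied instead by the hypothesis $h^1(\calI_B(1)) = 0$ together with the already-derived $h^1(\calI_{\Res_D(B)}(1)\otimes \calI_D) = 0$ and the minimality of $B$. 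In other words, $h^1(\calI_B(1)) = 0$ plays precisely the role that reducedness of $S$ plays in Lemma \ref{lemma: 5.1 in our form}. The delicate point is therefore to confirm that no step of the original proof secretly requires $B$ to be reduced beyond this linear-independence input; once $\Res_D(A) = \Res_D(B)$ is in hand, the reduction of the first paragraph closes the proof.
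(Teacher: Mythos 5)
Your first two paragraphs are sound: the reduction to proving $\Res_D(A) = \Res_D(B)$ is valid (given that equality, disjointness forces $\Res_D(A) = \Res_D(B) = \emptyset$, which is exactly $A \subseteq D$ and $B \subseteq D$), the monotonicity of $h^1$ under passing to subschemes holds, and your derivation of $h^1(\calI_{A \cup B}(1)) > 0$ together with the injection $H^1(\calI_{A\cup B}(1)) \hookrightarrow H^1(\calI_{(A\cup B)\cap D, D}(1))$ is correct. The problem is that your third paragraph, which you yourself call the heart of the argument, is not a proof but a plan: you propose to rerun the proof of Lemma \ref{lemma: 5.1 in our form}, whose proof is not contained in this paper (it is cited from \cite{bb2}), under the unverified assertion that the reducedness of $S$ there is used \emph{only} to guarantee linear independence, so that $h^1(\calI_B(1)) = 0$ can be substituted for it. That assertion is precisely what would need to be checked, and nothing in your proposal checks it; as written, the central step of the lemma is missing.

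Moreover, the assertion is genuinely doubtful, because reducedness in such residue arguments typically enters in a way that has nothing to do with linear independence: for a reduced $S$ one has the decomposition $S = (S \cap D) \cup \Res_D(S)$ (the residue is just the set of points of $S$ off $D$), which is what allows one to split a point of $\langle S \rangle$ between the trace and the residue. This decomposition fails for non-reduced schemes: if $B = 2o$ is a double point with $o \in D$, then $\Res_D(B)$ is the reduced point $o$, so $(B \cap D) \cup \Res_D(B) = B \cap D \subsetneq B$, and $\langle B \cap D \rangle + \langle \Res_D(B) \rangle$ need not contain $\langle B \rangle$. The paper's own proof avoids this issue entirely and never establishes $\Res_D(A) = \Res_D(B)$ as an intermediate step: it computes, via Grassmann's formula, $\dim (\langle A \rangle \cap \langle B \rangle)$ and $\dim (\langle A \cap D \rangle \cap \langle B \cap D \rangle)$ (this is where $h^1(\calI_B(1)) = 0$ enters, giving $\dim \langle B \rangle = \deg(B) - 1$, and where disjointness of the residues enters, giving $A \cap B \cap D = A \cap B$); then the residual exact sequence plus Remark \ref{inclusion} yield $h^1(\calI_{A \cup B}(1)) = h^1(\calI_{(A\cup B)\cap D}(1))$, whence the difference of the two dimensions is $h^1(\calI_{A\cap D}(1)) - h^1(\calI_A(1)) \leq 0$ and therefore $\langle A \rangle \cap \langle B \rangle = \langle A \cap D \rangle \cap \langle B \cap D \rangle$; since $p$ lies in this common intersection, minimality of $A$ and of $B$ gives $A \cap D = A$ and $B \cap D = B$. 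To salvage your outline you would have to supply a full proof of the generalized Lemma \ref{lemma: 5.1 in our form} for non-reduced $B$, and the natural way to do so is essentially the dimension count just described, which makes your reduction unnecessary.
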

\begin{proof}
We have $\dim \langle A \rangle = \deg(A) - 1 - h^1 (\calI_A(1))$ and $\dim \langle B \rangle = \deg(B) - 1 $. By Grassmann's formula, we have $\dim (\langle A \rangle \cap \langle B \rangle) = \dim \langle A \rangle + \dim \langle  B \rangle - \dim (\langle A\rangle + \langle B\rangle)$. The last term is $ \dim (\langle A\rangle + \langle B\rangle) = \dim \langle A \cup  B\rangle = \deg (A \cup B) -1 - h^1 (\calI_{A \cup B}(1)) = \deg (A) + \deg(B) - \deg(A \cap B) -1 - h^1 (\calI_{A \cup B}(1)) $. We deduce $\dim (\langle A \rangle \cap \langle B \rangle) = \deg(A \cap B) + h^1 (\calI_{A \cup B}(1)) -1 - h^1 (\calI_A(1))$.

Similarly, we have $\dim (\langle A \cap D \rangle \cap \langle B \cap D\rangle) = \deg(A \cap B \cap D) + h^1 (\calI_{(A \cup B) \cap D}(1)) -1  - h^1 (\calI_{A \cap D}(1))$. Since $\Res _D(B)\cap \Res _D(A) =\emptyset$, we have $A \cap B \cap D = A \cap B$ which provides $\Res_D(A) \cup \Res_D(B) = \Res_D(A \cup B)$.

Consider the residual exact sequence of $A \cup B$ in $X$ with respect to $D$:
 \[
  0 \to \calI_{\Res_D(A \cup B)}(1) \otimes \calI_D \to \calI_{A \cup B}(1) \to \calI_{(A \cup B)\cap D , D} (1) \to 0.
 \]
From the hypothesis, we have $h^1 (\calI_{\Res_D(A \cup B)}(1)\otimes \calI_D) = 0$, so $h^1(\calI_{A \cup B}(1)) \leq h^1(\calI_{(A \cup B)\cap D , D} (1))$. On the other hand, Remark \ref{inclusion} applied to $(A \cup B)\cap D$ provides that $h^1(\Ii _{(A\cup B)\cap D,D}(1)) \leq h^1 (\Ii _{(A\cup B)\cap D}(1)) \leq h^1 (\Ii _{A\cup B}(1))$, so we obtain $h^1(\calI_{A \cup B}(1)) = h^1 (\Ii _{(A\cup B)\cap D}(1))$.

We conclude $\dim (\langle A \rangle \cap \langle B \rangle) - \dim (\langle A \cap D \rangle \cap \langle B \cap D\rangle) =  - h^1 (\calI_A(1)) + h^1 (\calI_{A \cap D}(1)) \leq 0$ and therefore $\langle A \rangle \cap \langle B \rangle = \langle A \cap D \rangle \cap \langle B \cap D\rangle$. This shows $p \in \langle A \cap D \rangle$ and $p \in \langle B \cap D\rangle$ and from the minimality hypothesis we conclude $A \cap D = A$ and $B \cap D = B$ so $A \cup B \subseteq D$.
\end{proof}

We point out that the hypothesis $h^1(\calI_B(1))= 0$ in the hypothesis of Lemma \ref{lemma: 5.1 plus} is necessary and does not follow from the other hypothesis of the lemma. In fact, the condition that $B$ minimally spans $p$ does not guarantee $h^1(\calI_B(1)) = 0$ as shown in Example \ref{example: minimally spanning with h1} in Appendix \ref{section: minimally spanning}.

We will also need the following result. 

\begin{lemma}[Lemma 1, \cite{bb1}]\label{lemma: union minimals have h^1}
Let $p\in \PP^n$ and let $A,B$ be zero-dimensional schemes in $X$ such that $p \in \langle A \rangle$, $p \in \langle B \rangle$ and there are no $A' \subsetneq A$ and $B' \subsetneq B$ with $p \in \langle A'\rangle$ or $p \in \langle B' \rangle$. Then $h^1( \calI_{A \cup B}(1)) > 0$.
\end{lemma}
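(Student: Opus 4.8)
The plan is to argue by contradiction: assume $h^1(\calI_{A\cup B}(1)) = 0$ and deduce $A = B$. Note first that the statement is really about two \emph{distinct} minimal schemes (the role played by the hypothesis $S \neq A$ in Lemma \ref{lemma: 5.1 in our form}): if $A = B$ were allowed, a linearly independent $A$ would already violate the conclusion. So I take $A \neq B$ and aim to contradict this. The whole argument is a variant of the linear-algebra bookkeeping already carried out in the proof of Lemma \ref{lemma: 5.1 plus}, now combined with minimality.

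First I would recall the span--cohomology dictionary used in the excerpt: for a zero-dimensional scheme $Z$ one has $\dim\langle Z\rangle = \deg Z - 1 - h^1(\calI_Z(1))$, so $h^1(\calI_Z(1)) = 0$ is exactly linear independence of $Z$. Applying Remark \ref{lemma: h1 - h1 vs deg - deg} to the inclusions $A, B, A\cap B \subseteq A\cup B$, the vanishing $h^1(\calI_{A\cup B}(1)) = 0$ forces $h^1(\calI_A(1)) = h^1(\calI_B(1)) = h^1(\calI_{A\cap B}(1)) = 0$ as well; hence $A$, $B$ and $A\cap B$ are all linearly independent.

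Next I would run the Grassmann computation. Using $\langle A\cup B\rangle = \langle A\rangle + \langle B\rangle$, the degree identity $\deg(A\cup B) = \deg A + \deg B - \deg(A\cap B)$, and the linear independence of $A$, $B$, $A\cup B$, I obtain
\[
\dim(\langle A\rangle \cap \langle B\rangle) = \dim\langle A\rangle + \dim\langle B\rangle - \dim\langle A\cup B\rangle = \deg(A\cap B) - 1 = \dim\langle A\cap B\rangle.
\]
Since $\langle A\cap B\rangle \subseteq \langle A\rangle \cap \langle B\rangle$ and the two spaces have equal dimension, they coincide, so $\langle A\rangle \cap \langle B\rangle = \langle A\cap B\rangle$.

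The final step is where minimality is used. As $p \in \langle A\rangle$ and $p \in \langle B\rangle$, we get $p \in \langle A\rangle \cap \langle B\rangle = \langle A\cap B\rangle$. Because $A\cap B \subseteq A$ and no proper subscheme of $A$ spans $p$, we must have $A\cap B = A$, i.e. $A \subseteq B$; symmetrically $B \subseteq A$, whence $A = B$, contradicting $A \neq B$. (The degenerate case $A\cap B = \emptyset$ is even quicker: the two spans would then be disjoint yet both contain $p$.) This contradiction yields $h^1(\calI_{A\cup B}(1)) > 0$. I do not expect any serious geometric obstacle here: the proof is a direct computation through the span--$h^1$ dictionary, and the only points needing care are the implicit hypothesis $A \neq B$ and the handling of $A \cap B = \emptyset$.
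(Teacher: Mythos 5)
The paper does not actually prove this lemma---it is imported verbatim from \cite{bb1}---but your argument is correct and is essentially the same proof as in that source: assuming $h^1(\calI_{A\cup B}(1))=0$, monotonicity of $h^1$ plus Grassmann's formula force $\langle A\rangle \cap \langle B\rangle = \langle A\cap B\rangle$, and then minimality of $A$ and $B$ gives $A = A\cap B = B$, a contradiction. Your observation that the hypothesis $A \neq B$ is implicitly required is also correct (it appears explicitly in the statement of Lemma 1 of \cite{bb1}); as transcribed in this paper the lemma would be false for a linearly independent scheme $A = B$ minimally spanning $p$, so flagging and using that hypothesis is exactly the right thing to do.
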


In Figure \ref{figure:rank W3}, we can schematically observe the effect of Lemma \ref{lemma: union minimals have h^1}: $\nu_3(Z)$ and $S = \{q_1,q_2,q_3\}$ both minimally span $W_3$; the zero-dimensional scheme consisting of the union $Z \cup S$ is not linearly independent and we can verify $h^1(\calI_{Z \cup S} (1)) > 0$ (see also Remark \ref{remark: sylvester thm} and Remark \ref{remark: basics on rhoX}).

\begin{figure}[!htp]
\begin{tikzpicture}
     \node[anchor=south west,inner sep=0] at (0,0) {\includegraphics[width=\textwidth]{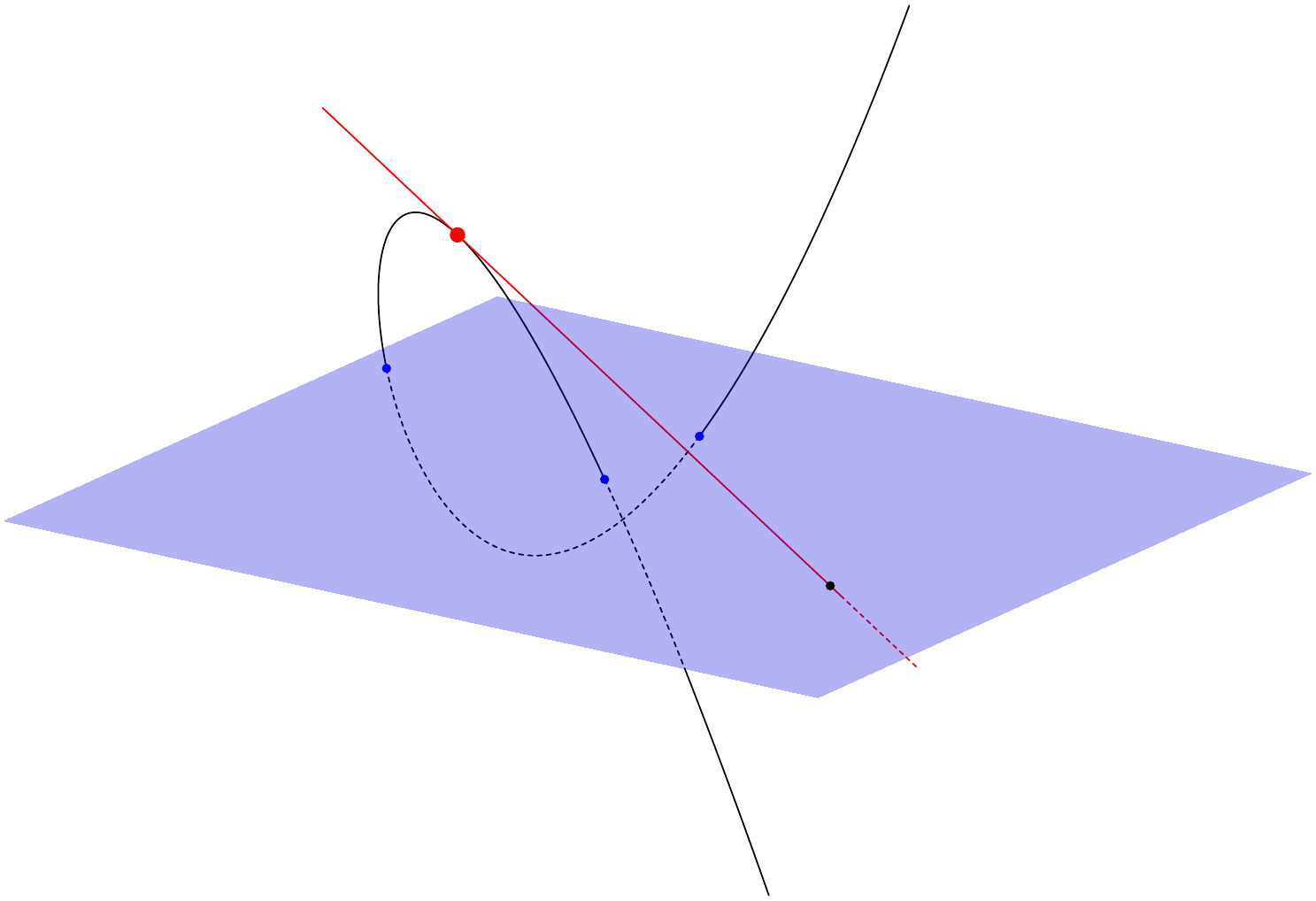}};
    \node at (4.75,6.1)  (q1)  {$q_1$};
    \node at (6.7,4.6)  (q2)  {$q_2$};
    \node at (8.3,5.2)  (q3)  {$q_3$};  
    \node at (9.8,3.7)  (W)  {$W$};
    \node at (5.85,7.7)  (v3)  {$\nu_3(Z)$};
\end{tikzpicture}

\caption{Cactus rank and symmetric rank of $W_3 = x^2y$. The black curve represents $\nu_3(\bbP^1)$. The zero-dimensional scheme $\nu_3(Z)$ has degree $2$ and it is supported at $[x^3] \in \nu_3(\bbP^1)$. The point $W_3$ lies on the span of $\nu_3(Z)$, that is the red line (tangent to $\nu_3(\bbP^1)$), and on the span of the three points $\{q_1,q_2,q_3\}$, that is the blue plane. We have $c_3(W_3) = 2$ and $R_3(W_3) = 3$.}\label{figure:rank W3}
\end{figure}

\section{Upper bounds for the partially symmetric rank of tensors}\label{section: upper bounds}

In this section, we provide upper bounds for the partially symmetric rank of certain tensors. Some of the results show that submultiplicativity of rank occurs frequently in this setting. In particular, we exploit upper bounds on the generic rank to obtain upper bounds on the rank showing that submultiplicativity occurs whenever the ranks are significantly larger than the generic. We show stronger upper bounds for the product of $W$-states and for partially symmetric tensors whose factors are bivariate monomials (the so-called Dicke states in the quantum information literature).

\subsection{Bounds via genericity arguments}\label{subsec: generic upper bounds}
In the case of tensors in $S^{d} \bbC^2$ having rank higher than the generic rank, then submultiplicativity is frequent. The reason is the following result that gives an absolute bound on the rank of a partially symmetric tensor.

\begin{proposition}\label{oo2}
 Let $k\geq 2$ and $d_1 \leq \cdots \leq d_k $ be nonnegative integers different from the following
 \begin{itemize}
\dotitem $k=2$, $d_1=2$ and $d_2$ even;
\dotitem $k=3$, $d_1=d_2=1$ and $d_3$ even;
\dotitem $k=3$, $d_1=d_2=d_3=1$;
\dotitem $k=4$, $d_1=d_2=d_3=d_4=1$.
 \end{itemize}
Let $N = \prod_i (d_i + 1)$. Then for every $T \in S^{d_1} \bbC^2 \ootimes S^{d_k} \bbC^2$, we have 
\[
 R_{d_1 \vvirg d_k}(T) \leq 2 \lceil N / (k+1) \rceil.
\]
\end{proposition}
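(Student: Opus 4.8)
The plan is to reduce the statement to a fact about the Segre--Veronese variety $\scrV = \scrV_{d_1 \vvirg d_k} \subseteq \bbP^{N-1}$: namely, I will bound the maximal $\scrV$-rank of any point by \emph{twice} the value $g := \lceil N/(k+1)\rceil$, and then recognize $g$ as (an upper bound for) the generic rank, i.e. show that $\sigma_g(\scrV) = \bbP^{N-1}$. The excluded families will enter precisely at this second step, as the cases where this secant variety fails to fill the ambient space.

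First I would isolate the following soft general-position lemma, valid for any irreducible nondegenerate $X \subseteq \bbP^M$: if $\sigma_g(X) = \bbP^M$, then every point has $X$-rank at most $2g$. Indeed, in that case $\sigma_g^\circ(X)$ is a dense constructible subset of $\bbP^M$, hence contains a dense open set $U$. Given an arbitrary $p$, pick $u \in U$ and let $\ell$ be the line through $p$ and $u$; since $\ell \not\subseteq \bbP^M \setminus U$, the intersection $\ell \cap U$ is cofinite in $\ell$, so I may choose two distinct points $a,b \in \ell \cap U$. Both have $X$-rank at most $g$, and $p \in \ell = \langle a,b\rangle$, so concatenating a length-$g$ decomposition of $a$ with one of $b$ gives $R_X(p) \leq R_X(a) + R_X(b) \leq 2g$. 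Applied to $X = \scrV$, this yields $R_{d_1 \vvirg d_k}(T) \leq 2g$ for every $T$, provided $\sigma_g(\scrV) = \bbP^{N-1}$.

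It then remains to check $\sigma_g(\scrV) = \bbP^{N-1}$ for $g = \lceil N/(k+1)\rceil$. Since $\dim \scrV = k$, the secant variety $\sigma_g(\scrV)$ has expected dimension $\min(g(k+1) - 1,\, N-1)$, and the choice of $g$ forces $g(k+1) - 1 \geq N-1$, so the expected dimension is $N-1$. As $\sigma_g(\scrV)$ is irreducible and closed, having the full expected dimension is equivalent to $\sigma_g(\scrV) = \bbP^{N-1}$; hence it suffices that $\sigma_g(\scrV)$ be non-defective. This is exactly the non-defectivity of the secant varieties of Segre--Veronese embeddings of products of $\bbP^1$, and the four listed families are precisely the defective cases of $\sigma_{\lceil N/(k+1)\rceil}$ in the known classification. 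I would invoke this classification directly to conclude.

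The main obstacle is the second step: the determination that $\sigma_{\lceil N/(k+1)\rceil}(\scrV)$ fills the ambient space rests on the full classification of defective Segre--Veronese varieties of $(\bbP^1)^k$, which is the genuinely deep input and dictates exactly which $(k; d_1 \vvirg d_k)$ must be excluded. By comparison, the first step is purely formal. Some care is also needed at the boundary (for instance factors with $d_i = 0$, which collapse the corresponding $\bbP^1$ to a point and reduce $\scrV$ to a Segre--Veronese of fewer factors), and in matching the four excluded cases against the defective list; but once non-defectivity is granted outside those cases, the bound $R_{d_1 \vvirg d_k}(T) \leq 2\lceil N/(k+1)\rceil$ is immediate.
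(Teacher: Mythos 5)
Your proposal is correct and takes essentially the same route as the paper's proof: both reduce the statement to (i) the fact that $\sigma_{\lceil N/(k+1)\rceil}(\scrV_{d_1 \vvirg d_k})$ fills the ambient space, resting on the classification of defective Segre--Veronese embeddings of $(\bbP^1)^k$ (the paper's references \cite{cgg}, \cite{bd}, \cite{ab3}), and (ii) the bound that maximum rank is at most twice the generic rank. The only difference is that the paper simply cites \cite{bt} for step (ii), whereas you reprove it via the line-through-a-generic-point argument, which is precisely the argument of that reference.
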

\begin{proof}
Fix $T \in S^{d_1} \bbC^2 \ootimes S^{d_k} \bbC^2$. The list of exceptions for the values of $k$ and $d_i$ guarantees that $\uR_{d_1 \vvirg d_k}(T) \leq \lceil N / (k+1) \rceil$ (see  \cite{cgg},  \cite[Theorem 1.1]{bd}, \cite[Theorem 3.1]{ab3}) because $\lceil N / (k+1) \rceil$ is the generic partially symmetric rank in $\bbP ( S^{d_1} \bbC^2 \ootimes S^{d_k} \bbC^2)$. By \cite{bt}, the maximum rank is bounded from above by twice the generic rank, therefore we conclude.
\end{proof}

Proposition \ref{oo2} implies that if $q_i \in S^{d_i} \bbC^2$ have sufficiently large ranks (for $i = 1 \vvirg k$), then
\[
R_{d_1 \vvirg d_k} ( q_1 \ootimes q_k) < R_{d_1} (q_1) \cdots  R_{d_k} (q_k).
\]
In particular, partially symmetric rank is strictly submultiplicative. More precisely, we have the following corollary:

\begin{corollary}\label{corol: bound for maximal rank tensors}
 Fix integers $k$,  $d_1 \vvirg d_k$ as in Proposition \ref{oo2}. For $i =1 \vvirg k$, let $q_i \in S^{d_i} \bbC^2$ with $R_{d_i} (q_i) = r_i$. If $r_1 \cdots r_k > \frac{2}{k+1} \prod_i (d_i + 1)$, then 
 \[
R_{d_1 \vvirg d_k}(q_1 \ootimes q_k) <  r_1 \cdots r_k.
 \]
\end{corollary}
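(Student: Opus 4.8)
The plan is to derive Corollary \ref{corol: bound for maximal rank tensors} directly from Proposition \ref{oo2} by comparing the absolute upper bound it provides against the naive multiplicative bound coming from the hypotheses. First I would invoke Proposition \ref{oo2}, whose hypotheses on $k$ and the $d_i$ are inherited verbatim from the corollary, to obtain the absolute bound
\begin{equation*}
R_{d_1 \vvirg d_k}(q_1 \ootimes q_k) \leq 2 \left\lceil \textfrac{N}{k+1} \right\rceil,
\end{equation*}
where $N = \prod_i (d_i + 1)$. This bound does not depend on the particular tensor, only on the format, so in particular it applies to $q_1 \ootimes q_k$.

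The remaining step is purely arithmetic: I would show that under the hypothesis $r_1 \cdots r_k > \frac{2}{k+1} \prod_i (d_i+1)$ the absolute bound $2\lceil N/(k+1)\rceil$ is strictly smaller than $r_1 \cdots r_k$. The only subtlety is the ceiling. I would handle it by noting that, since the $r_i$ are integers, so is $r_1 \cdots r_k$, and hence the strict inequality $r_1 \cdots r_k > \frac{2N}{k+1}$ together with integrality lets me absorb the ceiling. Concretely, $2\lceil N/(k+1)\rceil < 2(N/(k+1) + 1) = \frac{2N}{k+1} + 2$, so it suffices to check that the product $r_1 \cdots r_k$ dominates this quantity; more carefully, one argues that $\lceil N/(k+1)\rceil \leq \frac{r_1\cdots r_k}{2}$ would follow if $\frac{2}{k+1}\prod_i(d_i+1)$ and $r_1\cdots r_k$ were far enough apart, but since the gap guaranteed by the hypothesis is only strict rather than by a fixed margin, the clean statement is obtained by combining the strict inequality with the fact that an integer strictly exceeding a real number $\alpha$ must be at least $\lceil \alpha \rceil$ when $\alpha$ is an integer, or at least $\lfloor \alpha\rfloor + 1$ in general.

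The main obstacle I anticipate is precisely this interplay between the ceiling on the left and the strictness of the hypothesis on the right, which prevents a one-line cancellation. The cleanest route is to observe that the conclusion we want, $R_{d_1 \vvirg d_k}(q_1\ootimes q_k) < r_1\cdots r_k$, follows once we establish $2\lceil N/(k+1)\rceil < r_1 \cdots r_k$; and since $r_1\cdots r_k$ is a positive integer, it is enough to verify $2\lceil N/(k+1)\rceil \leq r_1 \cdots r_k$ cannot fail given the strict hypothesis. I would verify this by casework on the parity/divisibility of $N$ modulo $k+1$: when $(k+1) \mid N$ the ceiling is exact and the hypothesis gives the conclusion immediately, whereas in the remaining case $\lceil N/(k+1)\rceil = (N + s)/(k+1)$ for some $1 \leq s \leq k$, and I would check that $2(N+s)/(k+1) < r_1\cdots r_k$ still follows from the hypothesis using that $r_1 \cdots r_k$ is an integer exceeding $2N/(k+1)$. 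This reduces the whole corollary to an elementary estimate, with Proposition \ref{oo2} supplying all the geometric content.
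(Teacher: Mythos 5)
Your overall route is the same as the paper's: there the corollary is stated as an immediate consequence of Proposition \ref{oo2}, with no further argument, so all the geometric content is identical and the only thing to check is the arithmetic. The trouble is that the arithmetic step you yourself identify as the crux does not go through as you describe. In the non-divisible case you write $\lceil N/(k+1)\rceil = (N+s)/(k+1)$ with $1\leq s\leq k$ and claim that $2(N+s)/(k+1) < r_1\cdots r_k$ ``still follows from the hypothesis using that $r_1\cdots r_k$ is an integer exceeding $2N/(k+1)$.'' This is false: the interval $\left(2N/(k+1),\ 2\lceil N/(k+1)\rceil\right]$ has length strictly less than $2$, so it may contain one or two integers, and nothing in the hypothesis prevents $r_1\cdots r_k$ from being one of them. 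Concretely, take $k=2$, $(d_1,d_2)=(3,9)$, $q_1=W_3$, $q_2=W_9$, so that $(r_1,r_2)=(3,9)$ by Sylvester's theorem (Remark \ref{remark: sylvester thm}). Then $N=40$, the hypothesis reads $27 > 80/3 \approx 26.7$ and holds, but Proposition \ref{oo2} only gives $R_{3,9}(q_1\otimes q_2) \leq 2\lceil 40/3\rceil = 28$, which does not imply $R_{3,9}(q_1\otimes q_2) < 27$; both $27$ and $28$ lie in the bad interval, so integrality cannot bridge the gap. (A smaller slip: your remark that it ``is enough to verify $2\lceil N/(k+1)\rceil \leq r_1\cdots r_k$'' would anyway not suffice, since both sides are integers and a weak inequality does not upgrade to the strict one you need.) Your first case, where $(k+1)$ divides $N$, is handled correctly.

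In fairness, the paper itself is silently subject to the same issue, since it derives the corollary from Proposition \ref{oo2} in one line; the derivation is clean exactly when $(k+1)\mid N$, or more generally when the hypothesis forces $r_1\cdots r_k > 2\lceil N/(k+1)\rceil$. In the edge cases above the conclusion is still true, but it needs input beyond Proposition \ref{oo2}: in the instance $(d_1,d_2)=(3,9)$, binary forms of maximal rank are $GL_2$-equivalent to $W$-states, and Theorem \ref{bo1} gives $R_{3,9}(W_3\otimes W_9) \leq 2(3+9) = 24 < 27$. So a watertight proof of the corollary as stated must either absorb the ceiling by a stronger hypothesis or treat the near-extremal products $r_1\cdots r_k \in \{2\lceil N/(k+1)\rceil -1,\, 2\lceil N/(k+1)\rceil\}$ separately by such arguments; the casework you propose would fail precisely there.
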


The hypotheses of Corollary \ref{corol: bound for maximal rank tensors} are satisfied for instance when $r_i = d_i$, namely when $q_i = W_{d_i}$. However, Theorem \ref{bo1} will provide a stronger upper bound for product of $W$-states.

\subsection{Bounds for products of $W$-states}\label{subsec: upper bounds W}

In this section, we provide upper bounds for the partially symmetric rank of the tensor product of $W$-states. We point out that these bounds hold for tensor rank as well. In particular, the following result generalizes the expressions for $W_3 ^{\otimes 2}$ given in \cite{cjz} and for $W_3^{\otimes 3}$ given in \cite{cf} and answers Question 5 in Open Problems 16 of \cite{cf} in the setting of partially symmetric tensors.

\begin{theorem}\label{thm: rank upper bound W3 tatata W3}
 For every $k$, we have $R_{3\vvirg 3} (W_3^{\otimes k}) \leq (2+k) 2^{k-1}$.
\end{theorem}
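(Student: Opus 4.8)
The plan is to argue by induction on $k$, reducing the statement to the recursive bound
\[
R_{3\vvirg 3}(W_3^{\otimes(k+1)}) \leq 2\,R_{3\vvirg 3}(W_3^{\otimes k}) + 2^k .
\]
Setting $f(k) := (2+k)2^{k-1}$, one checks $f(1)=3$ and $f(k+1)=2f(k)+2^k$, so together with the base case $R_3(W_3)=3$ (which is Sylvester's Theorem, as recorded in the introduction) this recursion yields exactly the claimed bound. The engine of the induction is the border rank $2$ structure of $W_3$: fixing a parameter $t\neq 0$ and writing $c=t^2/3$, the odd part of the cube gives the single-variable identity
\[
x^2y = \tfrac{1}{6t}(x+ty)^3 - \tfrac{1}{6t}(x-ty)^3 - c\,y^3 ,
\]
so that the rank $2$ binary form $\phi := x^2y + c\,y^3 = \tfrac{1}{6t}\bigl[(x+ty)^3-(x-ty)^3\bigr]$ recovers $x^2y$ up to the single cube $c\,y^3$.

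For the inductive step I would peel off the last factor and substitute this identity there, obtaining
\[
W_3^{\otimes(k+1)} = W_3^{\otimes k}\otimes \phi \;-\; c\,\bigl(W_3^{\otimes k}\otimes y^3\bigr).
\]
Feeding an optimal decomposition of $W_3^{\otimes k}$ into the first summand and using $R(\phi)=2$ produces $2\,R_{3\vvirg 3}(W_3^{\otimes k})$ rank one terms, which accounts for the $2f(k)$ part of the recursion. The remaining task is to realize the correction $c\,(W_3^{\otimes k}\otimes y^3)$ with only $2^k$ further rank one terms. This is where the cactus rank should enter: by Lemma \ref{lemma: cactus of Wd1 tatata Wdk} the scheme $Z_k$ has degree $2^k$ and $W_3^{\otimes k}\in\langle \nu_{3\vvirg 3}(Z_k)\rangle$, whence $W_3^{\otimes k}\otimes y^3 \in \langle \nu_{3\vvirg 3}(Z_k\times\{[y]\})\rangle$, a scheme of degree $2^k$.

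The hard part, and the genuine source of strict submultiplicativity here, is converting this last cactus bound into a bound on the actual rank. As a standalone tensor $W_3^{\otimes k}\otimes y^3$ has rank equal to $R_{3\vvirg 3}(W_3^{\otimes k})=f(k)$, which is strictly larger than $2^k$; so the correction cannot be decomposed independently into $2^k$ rank one terms. The $2^k$ extra points must instead do double duty, simultaneously supplying the missing $y^3$ component in the last slot and merging with the two layers $W_3^{\otimes k}\otimes(x\pm ty)^3$. Concretely, one has to displace the last coordinate of these points away from $[y]$ so that the non-reduced scheme $Z_k\times\{[y]\}$ is effectively smoothed against the $(x\pm ty)$ layers, rather than realized on its own. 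Pinning down this configuration explicitly is, I expect, the main obstacle.

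To construct and certify the configuration I would pass to apolarity. A finite set $S\subset(\bbP^1)^{k+1}$ yields a rank decomposition of $W_3^{\otimes(k+1)}$ precisely when $I_S\subseteq (W_3^{\otimes(k+1)})^\perp$, and since the apolar algebra of a product is the tensor product of the apolar algebras, this annihilator is the complete intersection $(\partial_{y_i}^2,\partial_{x_i}^3 : 1\le i\le k+1)$. I would first solve the containment in the case $k=2$, recovering the rank $8$ decomposition of $W_3\otimes W_3$, and then in the case $k=3$; the resulting closed form for $W_3^{\otimes 3}$ is the explicit decomposition that the theorem promises, and propagating the same point pattern through the recursion gives the general bound. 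The delicate step throughout is verifying that the $2^k$ correction points, chosen jointly with the main $2f(k)$ points, satisfy the apolarity containment — this combinatorial verification is the true content of the proof.
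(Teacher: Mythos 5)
Your reduction to the recursion $R_{3\vvirg 3}(W_3^{\otimes(k+1)}) \leq 2R_{3\vvirg 3}(W_3^{\otimes k}) + 2^k$ is arithmetically consistent with the claimed bound (writing $f(k):=(2+k)2^{k-1}$, indeed $f(k+1)=2f(k)+2^k$), and your identity $W_3^{\otimes(k+1)} = W_3^{\otimes k}\otimes\phi - c\,(W_3^{\otimes k}\otimes y^3)$ is correct. But the argument has a genuine gap, which you yourself flag: the correction term $c\,(W_3^{\otimes k}\otimes y^3)$ has rank equal to $R_{3\vvirg 3}(W_3^{\otimes k})$, which exceeds $2^k$, so it cannot be decomposed on its own into $2^k$ rank-one terms, and the claim that $2^k$ points can ``do double duty'' by merging with the $2f(k)$ terms coming from the first summand is never substantiated. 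The cactus-rank observation ($W_3^{\otimes k}\otimes y^3$ lies in the span of the degree-$2^k$ scheme $\nu_{3\vvirg 3}(Z_k\times\{[y]\})$) does not convert into a rank statement, and no mechanism for the proposed ``smoothing against the $(x\pm ty)$ layers'' is given. There is also a structural problem with the induction itself: the inductive hypothesis only asserts the \emph{existence} of some decomposition of $W_3^{\otimes k}$ with $f(k)$ terms, with no control over its geometry, so the joint choice of the $2^k$ correction points cannot even be set up without strengthening the hypothesis to carry an explicit decomposition --- at which point the induction is doing no work and one needs a global construction anyway.

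The paper's proof supplies exactly the missing construction, and does so non-inductively: it writes $W_3^{\otimes k} = G - \sum_{i=1}^k H_i$ in one stroke, where $G = \bigotimes_i (W_3^{(i)} + y_i^3)$ has rank at most $2^k$ (each factor is a cubic with distinct linear factors, hence of rank $2$ by Sylvester), and $H_i$ replaces the $i$-th factor by $y_i^3$ and every other factor by a shifted state $W_3^{(j)} + a_{ij}y_j^3$, giving rank at most $2^{k-1}$ each. The entire content of the theorem is the choice of constants $a_{ij} = \xi_i/(\xi_i-\xi_j)$ and the verification --- via a divisibility and unique-factorization argument on the numerators --- that all the cross-terms involving at least one $y_j^3$ cancel, i.e.\ that the system \eqref{eqn: conditions on aij} holds. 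This cancellation identity is precisely the ``combinatorial verification'' you defer to the end of your proposal without carrying out; solving the apolarity containment for $k=2$ and $k=3$, even if done explicitly, would not by itself yield the statement for general $k$.
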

\begin{proof}
Fix $k$ and use variables $x_i,y_i$, $i=1, \ldots , k$, as basis of the $i$-th copy of $\bbC^2$. In particular $T = W_3^{(1)} \ootimes W_3^{(k)} = x_1^2y_1 \ootimes x_k^2y_k \in S^3 \bbC^2 \ootimes S^3 \bbC^2$. We will prove $R_{3 \vvirg 3} (T)  \leq (2+k)2^{k-1}$.

We determine an expression 
\begin{equation}\label{eqn: R333 expression for W3 tatata W3}
 T = G - \textsum_{i=1}^k H_i,
\end{equation}
where $R_{3 \vvirg 3} (G) \leq  2^k$ and $R_{3 \vvirg 3} (H_i) \leq 2^{k-1}$ for every $i$. Define
\begin{align*}
G = & (W_3^{(1)} + y_1^3) \ootimes (W_3^{(k)} + y_k^3), \\
H_i = & (W_3^{(1)} + a_{i1} y_1^3) \ootimes  (W_3^{(i-1)} + a_{i,i-1}y_{i-1}^3) \otimes y_i^3 \otimes \\
 & \otimes (W_3^{(i+1)} + a_{i,i+1}y_{i+1}^3) \ootimes (W_3^{k} + a_{ik}y_{k}^3),
\end{align*}
where $a_{ij} = \xi_i / (\xi_i-\xi_j)$ for some choice of distinct constants $\xi_i \neq 0,1$.

We claim that with this choice of $a_{ij}$, \eqref{eqn: R333 expression for W3 tatata W3} holds. Indeed, \eqref{eqn: R333 expression for W3 tatata W3} is true if and only if the coefficients $a_{ij}$ satisfy the following set of polynomial equations:
\begin{equation}\label{eqn: conditions on aij}
\left\{ 
\begin{array}{l}
\displaystyle \sum_{p=1}^\ell \Biggl(\prod_{\substack{ q = 1 \vvirg \ell \\ q \neq p } } a_{s_p s_q} \Biggr) -1 =0,\\
\text{for every $ 2\leq \ell \leq k$ and every $s_1 \vvirg s_\ell \in \{ 1 \vvirg k\}$ distinct.}
\end{array}
\right.
\end{equation}
Fix $\ell$ and without loss of generality consider the condition $\sum_{p=1}^\ell \left(\prod_{q \neq p} a_{p q} \right) -1 = 0$. For our choice of $a_{ij}$, the monomial $ \prod_{q \neq p} a_{p q}$ (with a fixed $p$) is
\[
\frac{\xi _p}{(\xi_p - \xi_1)} \cdots \frac{\xi_p}{(\xi_p -\xi_{p-1})} \cdot \frac{\xi_p}{(\xi_p -\xi_{p+1})} \cdots \frac{\xi_p}{(\xi_p -\xi_\ell)};
\]
regarding the $\xi_i$'s as variables, the least common denominator of these monomials is (up to scale) $\prod_{1 \leq \alpha < \beta \leq \ell} (\xi_\alpha - \xi_\beta)$, which has degree $\binom{\ell}{2}$ in the $\xi_j$'s. The $p$-th monomial in the numerator of the expression $\sum_{p=1}^\ell \left(\prod_{q \neq p} a_{p q} \right)$ is 
\begin{equation}\label{eqn: pth summand in numerator of upper bound condition}
(-1)^{p-1} \xi_p^\ell \prod_{\substack{1 \leq \alpha < \beta \leq \ell \\ \alpha,\beta \neq p}} (\xi_\alpha - \xi_\beta).
\end{equation}
\begin{claim*}
 Fix $\bar{\gamma},\bar{\delta}$ and $\ell$. Then the numerator of $\sum_{p=1}^\ell \left(\prod_{q \neq p} a_{p q} \right)$ is divisible by $(\xi_{\bar{\gamma}} - \xi_{\bar{\delta}})$.
\end{claim*}
\begin{quote}
\begin{proof}[Proof of Claim]
Suppose $\bar{\gamma} < \bar{\delta}$. If $p \neq \bar{\gamma},\bar{\delta}$, then the $p$-th summand in the numerator is divisible by $(\xi_{\bar{\gamma}} - \xi_{\bar{\delta}})$ as it appears in the product in \eqref{eqn: pth summand in numerator of upper bound condition}.

From \eqref{eqn: pth summand in numerator of upper bound condition} with $p = \bar{\gamma}$, we obtain that the $\bar{\gamma}$-th summand is
\begin{align*}
 &(-1)^{\bar{\gamma}-1} \xi_{\bar{\gamma}}^\ell \prod_{\substack{1 \leq \alpha < \beta \leq \ell \\ \alpha,\beta \neq \bar{\gamma}}} (\xi_\alpha - \xi_\beta) = \\
 &(-1)^{\bar{\gamma}-1}\xi_{\bar{\gamma}}^\ell  \prod_{\substack{1 \leq \alpha < \beta \leq \ell \\ \alpha,\beta \neq \bar{\gamma}, \bar{\delta}}} (\xi_\alpha - \xi_\beta) \cdot 
 \prod_{1 \leq \alpha \leq \bar{\gamma}-1 } (\xi_{\alpha} - \xi_{\bar{\delta}}) \cdot 
 \prod_{\bar{\gamma}+1 \leq \alpha \leq \bar{\delta}-1 } (\xi_{\alpha} - \xi_{\bar{\delta}}) \cdot 
\prod_{\bar{\delta}+1 \leq \beta \leq \ell } (\xi_{\bar{\delta}} - \xi_\beta).
 \end{align*}
 Similarly from \eqref{eqn: pth summand in numerator of upper bound condition} with $p = \bar{\delta}$, the $\bar{\delta}$-th summand is
 \begin{align*}
(-1)^{\bar{\delta}-1}\xi_{\bar{\delta}}^\ell  \prod_{\substack{1 \leq \alpha < \beta \leq \ell \\ \alpha,\beta \neq \bar{\gamma}, \bar{\delta}}} (\xi_\alpha - \xi_\beta) \cdot 
 \prod_{1 \leq \alpha \leq \bar{\gamma}-1 } (\xi_{\alpha} - \xi_{\bar{\gamma}}) \cdot 
 \prod_{\bar{\gamma}+1 \leq \alpha \leq \bar{\delta}-1 } (\xi_{\bar{\gamma}} - \xi_{\beta}) \cdot 
\prod_{\bar{\delta}+1 \leq \beta \leq \ell } (\xi_{\bar{\gamma}} - \xi_\beta). 
 \end{align*}
Specializing to $\xi_{\bar{\gamma}} = \xi_{\bar{\delta}} = \tilde{\xi}$, we can factor out of the sum of these two terms the product  
\[
\tilde{\xi} ^\ell \prod_{\substack{1 \leq \alpha < \beta \leq \ell \\ \alpha,\beta \neq \bar{\gamma}, \bar{\delta}}} (\xi_\alpha - \xi_\beta) \cdot 
  \prod_{1 \leq \alpha \leq \bar{\gamma}-1 } (\xi_{\alpha} - \tilde{\xi}) \cdot 
  \prod_{\bar{\delta}+1 \leq \beta \leq \ell } (\tilde{\xi} - \xi_\beta)  
\]
obtaining
\begin{align*}
\left( (-1)^{\bar{\gamma}-1} \prod_{\bar{\gamma}+1 \leq \alpha \leq \bar{\delta}-1 } (\xi_{\alpha} - \tilde{\xi})\right) + \left(  (-1)^{\bar{\delta}-1} \prod_{\bar{\gamma}+1 \leq \beta \leq \bar{\delta}-1 } (\tilde{\xi} - \xi_{\beta}) \right) = \\  
\left[ (-1)^{\bar{\gamma}-1} \cdot (-1)^{(\bar{\delta} -1) - (\bar{\gamma}+1) + 1}  + (-1)^{\bar{\delta}-1} \right] \cdot \prod_{\bar{\gamma}+1 \leq \beta \leq \bar{\delta}-1 } (\tilde{\xi} - \xi_{\beta}) = 0.
\end{align*}
This shows that the numerator is divisible by $\xi _ {\bar{\gamma}} - \xi _ {\bar{\delta}}$.
\end{proof}
\end{quote}

Unique factorization implies that the numerator is the same as the denominator up to a constant factor. Checking this constant factor shows that the conditions in \eqref{eqn: conditions on aij} hold.

Recall that if $g \in S^3 \bbC^2$ is a binary cubic with distinct linear factors, then $R_3(g) = 2$ (this is a classical fact due to Sylvester's Theorem, see e.g. \cite[Ex. 3.10]{cgo:four}). This shows that $R_{3 \vvirg 3} (G) \leq 2^k$, because the factors of $G$ are cubic forms with distinct linear factors. Moreover, if $\xi_i \neq 0,1$ for every $i$, then $a_{ij} \neq 0,1$ for every $i,j$ and therefore we have $R_{3 \vvirg 3} (H_i) \leq 2^{k-1}$ for every $i$, because $k-1$ of its factors are cubic forms with distinct linear factors and the $i$-th factor is $y_i^3$ which has rank $1$.

We conclude $R_{3 \vvirg 3}(T) = R_{3 \vvirg 3}(G - \sum_i H_i) \leq 2^k + k 2^{k-1} = (2+k)2^{k-1}$. 
\end{proof}

\begin{remark}
 The argument that is used in Theorem \ref{thm: rank upper bound W3 tatata W3} to write $W^{(1)}_3 \ootimes W^{(k)}_3 = G - \sum_1^k H_i$ works in much higher generality. Fix $d_1 \vvirg d_k$ and let $g_1 \vvirg g_k$ be binary forms with $g_i \in S^{d_i} \bbC^2$. Then, we can write $  W_{d_1} \ootimes W_{d_k} = G - \sum_1 ^d H_i$ where
 \begin{align*}
 G = & (W_{d_1} + g_1) \ootimes (W_{d_k} + g_k) \\
 H_i = &(W_{d_1} + a_{i1} g_1) \ootimes (W_{d_{(i-1)}} + a_{i,i-1}g_{i-1}) \otimes g_i \otimes \\ 
 & \otimes (W_{d_{(i+1)}} + a_{i,i+1}g_{i+1}) \ootimes (W_{d_k} + a_{ik}g_k),
 \end{align*}
and the constants $a_{ij}$ are chosen in the same way as in Theorem \ref{thm: rank upper bound W3 tatata W3}. This argument can be used to obtain an upper bound for $R_{d_1 \vvirg d_k}( W_{d_1} \ootimes W_{d_k})$ but this bound is worse than the bound that we provide in Theorem \ref{bo1}. For this reason, we do not give the details of this construction.
\end{remark}

The next two results provide upper bounds on the partially symmetric rank of products of $W$-states for $d_i \geq 3$. The argument is essentially the same for the two results: it is based on determining a collection of rational normal curves in $\nu_{d_1 \vvirg d_k} ((\bbP^1)^{k})$ intersecting at the point $o$ with high multiplicity, so that their union contains the zero-dimensional scheme $Z$. The upper bound on the rank follows from the upper bound on the dimension of the span of the union of the rational curves.

In order to gain intuition for this procedure, we first prove the upper bound in the case $k=2$. In particular the following result improves the upper bound of Corollary \ref{corol: bound for maximal rank tensors} which was obtained via a genericity argument.

\begin{proposition}\label{prop: upper bound Wd1 ot Wd2}
For all positive integers $d_1,d_2 \geq 3$, we have $R_{d_1,d_2}(W_{d_1}\otimes W_{d_2})\le 2d_1+2d_2-1$. 
\end{proposition}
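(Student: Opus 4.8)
The plan is to realize $T$ in the span of a union of rational normal curves, following the strategy announced before the statement: I would exhibit a reduced curve $D\subseteq(\bbP^1)^2$, a union of three rational curves through $o$, such that $\nu_{d_1,d_2}(D)$ is a union of three rational normal curves whose linear span has dimension exactly $2d_1+2d_2-1$ and contains $\nu_{d_1,d_2}(Z_2)$; the bound then follows from this dimension count.

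Concretely, take $D=L_1\cup L_2\cup C$, where $L_1=\pi_1^{-1}(o_1)$ and $L_2=\pi_2^{-1}(o_1)$ are the two axes through $o$ and $C$ is the graph of an automorphism $\phi$ of $\bbP^1$ with $\phi(o_1)=o_1$ (a curve of bidegree $(1,1)$ through $o$ with ``diagonal'' tangent direction, since $\phi'(o_1)\neq 0$). Restricting $\calO(d_1,d_2)$ to each component, the images $\nu_{d_1,d_2}(L_1)$, $\nu_{d_1,d_2}(L_2)$, $\nu_{d_1,d_2}(C)$ are rational normal curves of degrees $d_2$, $d_1$ and $d_1+d_2$, all lying on the Segre--Veronese variety and hence consisting of rank-one tensors; and $\phi(o_1)=o_1$ forces $C$ to meet each axis only at $o$, so that the three curves are pairwise disjoint away from $o$. (The hypothesis $d_1,d_2\ge 3$ guarantees that these are genuine rational normal curves of the stated degrees and that the span computation below holds.)

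The crux is the scheme-theoretic containment $Z_2\subseteq D$. Granting it, Lemma \ref{lemma: cactus of Wd1 tatata Wdk} gives $T\in\langle\nu_{d_1,d_2}(Z_2)\rangle\subseteq\langle\nu_{d_1,d_2}(D)\rangle=\langle\nu_{d_1,d_2}(L_1)\rangle+\langle\nu_{d_1,d_2}(L_2)\rangle+\langle\nu_{d_1,d_2}(C)\rangle$. To verify the containment I would pass to the affine coordinates $u=y_1/x_1$, $v=y_2/x_2$ at $o$, in which $I_{Z_2}=(u^2,v^2)$ and $L_1,L_2,C$ are cut out by $u$, $v$ and $v-cu+(\text{higher order})$ with $c=\phi'(o_1)\neq 0$. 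As $(\bbP^1)^2$ is a smooth surface and the three tangent directions are pairwise distinct, the reduced union $D$ is locally the divisor of the product $g=uv\,(v-cu)+\cdots$, and one checks directly that $g\in(u^2,v^2)$: its lowest-order part $uv^2-c\,u^2v$ lies in $(u^2,v^2)$, and so does every monomial of degree $\ge 3$. Hence $I_D\subseteq I_{Z_2}$, i.e. $Z_2\subseteq D$. This is the step I expect to be the main obstacle, both in getting the scheme inclusion right and in choosing $C$ so that its union with the two axes captures the ``mixed'' direction $uv$ of $Z_2$, which $L_1\cup L_2$ alone misses (indeed $(L_1\cup L_2)\cap Z_2$ is the double point $2o\subsetneq Z_2$).

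Finally, I would bound $R_{d_1,d_2}(T)$ by $\dim\langle\nu_{d_1,d_2}(D)\rangle$. Decomposing $T=T_1+T_2+T_3$ with $T_i$ in the span of the $i$-th curve, Sylvester's Theorem (recalled in the proof of Theorem \ref{thm: rank upper bound W3 tatata W3}; see also the Appendix, Section \ref{appendix}) bounds $R_{d_1,d_2}(T_i)$ by the degree of that curve, for a crude total of $2d_1+2d_2$. The saving of one comes from the rank-one point $p_0=[x_1^{d_1}\otimes x_2^{d_2}]=\nu_{d_1,d_2}(o)$ lying on all three curves: the span of $\nu_{d_1,d_2}(L_1)$ and $\nu_{d_1,d_2}(L_2)$ meets the span of $\nu_{d_1,d_2}(C)$ in a projective line (through $p_0$), so that $\dim\langle\nu_{d_1,d_2}(D)\rangle=2d_1+2d_2-1$; arranging the decomposition of $T$ to use $p_0$ on each curve then lets the redundant copies of $p_0$ be merged, saving at least one term and giving $R_{d_1,d_2}(T)\le 2d_1+2d_2-1$. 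The same scheme of argument, with more rational normal curves meeting at $o$, is what one would push through for the general bound of Theorem \ref{bo1}.
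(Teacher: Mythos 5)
Your construction is the same as the paper's: your curve $D=L_1\cup L_2\cup C$ is exactly the curve used there (the two rulings through $o$ together with a smooth $(1,1)$-curve through $o$), and your local verification that $I_D\subseteq(u^2,v^2)$, hence $Z_2\subseteq D$, is correct. In fact it is simpler than you make it: any $(1,1)$-curve through $o$ works, whatever its tangent direction, since every monomial of $uv\,h$ with $h(o)=0$ is divisible by $u^2$ or by $v^2$ (the paper proves the same inclusion by a residual-scheme degree count). Your crude bound $R_{d_1,d_2}(T)\le d_2+d_1+(d_1+d_2)=2d_1+2d_2$, via Sylvester on each of the three rational normal curves, is also sound. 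Your assertion that $\langle\nu_{d_1,d_2}(L_1)\rangle+\langle\nu_{d_1,d_2}(L_2)\rangle$ meets $\langle\nu_{d_1,d_2}(C)\rangle$ in a line is stated without proof, but it is true and fixable: the tangent vector to $\nu_{d_1,d_2}(C)$ at $p_0$ is $d_1W_{d_1}\otimes x_2^{d_2}+c\,d_2\,x_1^{d_1}\otimes W_{d_2}$, which lies in $S^{d_1}\bbC^2\otimes x_2^{d_2}+x_1^{d_1}\otimes S^{d_2}\bbC^2$; this recovers the paper's computation, done there via K\"unneth and the restriction sequence, that $\dim\langle\nu_{d_1,d_2}(D)\rangle\le 2d_1+2d_2-1$.

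The genuine gap is the last step. Knowing that the span has dimension $2d_1+2d_2-1$ only yields rank at most $2d_1+2d_2$, and your device for saving one term --- ``arranging the decomposition of $T$ to use $p_0$ on each curve'' and merging the copies of $p_0$ --- fails as stated. A point in the span of a rational normal curve need not admit a decomposition of length equal to the degree that contains a prescribed point of the curve: for instance, no expression $W_d=\lambda x^d+\sum_{i=1}^{d-1}\ell_i^d$ exists, because $W_d-\lambda x^d=x^{d-1}(y-\lambda x)$ has rank $d$ by Sylvester's Theorem, not at most $d-1$. So points lying on tangent (or osculating) spaces at $p_0$ --- which is precisely the kind of summand that arises here, since $T\in\langle\nu_{d_1,d_2}(Z_2)\rangle$ is spanned by $p_0$ and derivative vectors at $p_0$ --- obstruct the merging, and you neither exhibit a decomposition $T=T_1+T_2+T_3$ avoiding the obstruction nor give a mechanism that would. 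The paper closes exactly this gap by a different argument: since the curve is reduced and connected, $h^1(\calI_{\nu_{d_1,d_2}(D),E})=0$ where $E$ is the span; a generic hyperplane $H\subseteq E$ through $T$ meets $\nu_{d_1,d_2}(D)$ in $\deg\nu_{d_1,d_2}(D)=2d_1+2d_2$ points, and the vanishing $h^0(\calI_{\nu_{d_1,d_2}(D),E}(1))=0$ pushed through the residual sequence of $H$ shows these points span $H$; as $\dim H=2d_1+2d_2-2$, they are linearly dependent, so already $2d_1+2d_2-1$ of them span $H\ni T$. You would need this hyperplane-section argument, or an explicit decomposition of $T$ into osculating points of the three curves, to legitimately obtain the $-1$.
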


\begin{proof}
Let $L_1=\{o_1\}\times \mathbb{P}^1$, $L_2=\mathbb{P}^1\times \{o_1\}$ and $Z = Z_2$ as in Section \ref{section: Preliminaries} (Notation). Let $D\in |\Oo _{\PP^1\times \PP^1}(1,1)|$ be any smooth curve passing through $o$. Let $C:= L_1 L_2 D \in \vert \calO_{\bbP^1 \times \bbP^1} (2,2) \vert$.

\begin{claim*}
 $Z\subset C$. 
\end{claim*}

\begin{quote}
\begin{proof}[Proof of Claim]
Observe first that the double point $2o$ is contained in the intersection $Z \cap (L_1 \cup L_2)$. Indeed $L_1 \cup L_2$ is singular at $o$, therefore $2o \subseteq L_1 \cup L_2$. Moreover $2o \subseteq Z$: using local coordinates $\eta_1,\eta_2$ in a neighborhood of $o$, we have $I_Z = (\eta_1^2 , \eta_2^2)$ and $I_{2o} = (\eta_1,\eta_2)^2 = (\eta_1^2 , \eta_1\eta_2, \eta_2^2)$, so that $I_Z \subseteq I_{2o}$ therefore $2o \subseteq Z$.  Since $\deg(2o) = 3$, we have $\deg(Z\cap (L_1\cup L_2))\geq 3$ and by Eqn. \eqref{eqn: degree as residue plus intersection}, we obtain $\deg (\Res _{L_1 L_2}(Z)) \leq 1$ because $\deg(Z) = 4$. This shows $\Res _{L_1 L_2}(Z)) \subseteq \{ o \}$ and since $o \in D$, we conclude.
\end{proof}\end{quote}

The curve $C$ is reduced and connected: it is reduced, because $L_1$ and $L_2$ are distinct and they are not contained in $D$ because $D$ is smooth; it is connected because the three components are connected and they intersect at $o$.  Consider the exact sequence
\[
0\to \Oo _{\PP^1\times \PP^1}(d_1-2,d_2-2)\xto{\cdot C} \Oo _{\PP^1\times \PP^1}(d_1,d_2)\to \Oo _{C}(d_1,d_2) \to 0. 
\]
By K\"unneth's formula (see e.g. p. 58 in \cite{GrifHar:PrinciplesAlgebraicGeometry}) we have $h^1(\Oo _{\PP^1\times \PP^1}(d_1-2,d_2-2)) = h^1(\Oo _{\PP^1}(d_1-2))h^0(\Oo _{\PP^1}(d_2-2)) + h^0(\Oo _{\PP^1}(d_1-2)) h^1(\Oo _{\PP^1}(d_2-2)) = 0$, because $h^1(\Oo _{\PP^1}(d)) = 0$ if $d \geq 0$ and by hypothesis $d_1,d_2 \geq 3$. So the long exact sequence in cohomology of the sequence above provides $h^0(\Oo _C(d_1,d_2)) = h^0(\Oo _{\PP^1\times \PP^1}(d_1,d_2))- h^0(\Oo _{\PP^1\times \PP^1}(d_1-2,d_2-2)) = (d_1+1)(d_2+1) -(d_1-1)(d_2-1) = 2d_1+2d_2$. The linear span $E:= \langle \nu_{d_1,d_2}(C)\rangle$ has dimension at most $2d_1+2d_2-1$. This argument proves the upper bound $R_{d_1,d_2}(W_{d_1}\otimes W_{d_2})\le 2d_1+2d_2$.

The rest of the argument will provide the additional increment by $1$. Since $Z\subset C$ and $W_{d_1}\otimes W_{d_2}\in \langle \nu _{d_1,d_2}(Z)\rangle$, we compute $R_{\nu_{d_1,d_2}(C)}(W_{d_1}\otimes W_{d_2})$; since $\nu_{d_1,d_2}(C)\subset \nu_{d_1,d_2}(\mathbb{P}^1\times \mathbb{P}^1)$, we obtain the upper bound $R_{d_1,d_2}(W_{d_1}\otimes W_{d_2}) \leq R_{\nu_{d_1,d_2}(C)}(W_{d_1}\otimes W_{d_2})$. Let $H$ be a generic hyperplane in $E$ through $W_{d_1} \otimes W_{d_2}$: since $\nu_{d_1,d_2}(C)$ is a curve, by Bezout's Theorem on $\bbP^1 \times \bbP^1$ (see e.g. \cite{EisHar:3264}, Ch. 1) we have that $H \cap \nu_{d_1,d_2}(C)$ is a collection of $\deg(\nu_{d_1,d_2}(C)) = 2d_1 + 2d_2$ points. We want to show this choice of $H$ is generic enough so that $\langle H \cap  \nu_{d_1,d_2}(C) \rangle = H$. Observe that $h^1 (\Ii _{\nu_{d_1,d_2}(C),E} ) = 0$: this follows from the restriction exact sequence 
\[
 0 \to \calI_{\nu_{d_1,d_2}(C),E} \to \calO_E \to \calO_{\nu_{d_1,d_2}(C)} \to 0
\]
since $h^0(\calO_{\nu_{d_1,d_2}(C)}) = 1$ (because $C$ is reduced and connected) and the restriction map $H^0(\calO_E) \to H^0(\calO_{\nu_{d_1,d_2}(C)})$ is surjective. Now consider the exact sequence in $E$:
\begin{equation}\label{eqn: exact sequence in Prop Rd1d2}
 0\to \Ii _{\nu_{d_1,d_2}(C),E} \xto{\cdot H} \Ii _{\nu_{d_1,d_2}(C) ,E}(1)\to \Ii _{H \cap \nu_{d_1,d_2}(C),H}(1)\to 0.
\end{equation}
Since $E$ is defined as the span of $\nu_{d_1,d_2}(C)$, we have that $\nu_{d_1,d_2}(C)$ does not have linear equations in $E$, namely $h^0 (\Ii _{\nu_{d_1,d_2}(C) ,E}(1)) = 0$. The long exact sequence in cohomology of \eqref{eqn: exact sequence in Prop Rd1d2} provides that $h^0 (\Ii _{H \cap \nu_{d_1,d_2}(C),H}(1)) = 0$, which means that $H \cap \nu_{d_1,d_2}(C)$ has no linear equations in $H$, therefore $\langle H \cap  \nu_{d_1,d_2}(C) \rangle = H$. Now, since $\dim H = 2d_1 + 2d_2 - 2$, the points of $H \cap \nu_{d_1,d_2}(C) $ are linearly dependent. This proves that every point of $E$ has $\nu_{d_1,d_2}(C)$-rank at most $2d_1 + 2d_2 -1$, and this concludes the proof.
\end{proof}

Fix integers $k\ge 2$ and $d_i\ge 3$, $1\le i \le k$ and let $T = W_{d_1} \ootimes W_{d_k}$. We generalize the construction of Proposition \ref{prop: upper bound Wd1 ot Wd2} to give an upper bound for $R_{d_1 \vvirg d_k}(T)$, which improves the bound in \cite[Cor. 11]{cjz} by roughly a factor of $2$. The argument is similar to the first part of Proposition \ref{prop: upper bound Wd1 ot Wd2}, where the bound $2d_1+2d_2$ is provided. However, the more general setting makes it hard to use a geometric argument to prove the analog of the inclusion $Z \subseteq C$ of the Claim of Proposition \ref{prop: upper bound Wd1 ot Wd2}: in the proof of the next result we use a Gr\"obner degeneration argument, which exploits the combinatorics of the ideals involved, making it possible to prove the desired inclusion in general.

\begin{theorem}\label{bo1}
 For every $k \geq 2$, every $d_1 \vvirg d_k \geq 3$, we have
 \[
 R_{d_1, \ldots , d_k}(W_{d_1}\otimes \cdots \otimes W_{d_k}) \leq 2^{k-1}(d_1 + \cdots + d_k).
\]
\end{theorem}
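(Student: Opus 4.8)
The plan is to imitate the first part of Proposition \ref{prop: upper bound Wd1 ot Wd2}, replacing the single divisor $C\in|\calO(2,2)|$ (which is a curve only because $k=2$) by a union of $2^k-1$ rational curves whose total span I can control. For each nonempty $S\subseteq\{1\vvirg k\}$, let $\gamma_S\subseteq(\bbP^1)^k$ be the rational curve of multidegree $\mathbf 1_S$ (the indicator vector of $S$) obtained by fixing the $j$-th coordinate equal to $o_1$ for $j\notin S$ and taking the small diagonal $\{p_i=p_j:i,j\in S\}$ in the remaining factors; thus $\gamma_S$ passes through $o$ with tangent direction $\mathbf 1_S$. Set $C=\bigcup_S\gamma_S$, a reduced connected curve with $c:=2^k-1$ smooth rational components, singular only at $o$ (a direct check gives $\gamma_S\cap\gamma_{S'}=\{o\}$ for $S\neq S'$). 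First I would record the two numerical inputs. Since $\nu_{d_1\vvirg d_k}(\gamma_S)$ has degree $\sum_{i\in S}d_i$ and each index $i$ lies in exactly $2^{k-1}$ subsets, $\deg\calO_C(d_1\vvirg d_k)=\sum_S\sum_{i\in S}d_i=2^{k-1}(d_1+\cdots+d_k)$. Moreover $p_a(C)\geq 1$: the $c=2^k-1$ tangent directions $\mathbf 1_S$ span only the $k$-dimensional tangent space at $o$, so beyond the $c-1$ conditions matching the values of sections at $o$ the dependencies impose further gluing (concretely $\delta_o(C)\geq(c-1)+(c-k)$, whence $p_a=\delta_o-c+1\geq c-k=2^k-1-k\geq 1$ for $k\geq 2$).

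The main obstacle is the analogue of the Claim in Proposition \ref{prop: upper bound Wd1 ot Wd2}, the scheme-theoretic inclusion $Z_k\subseteq C$; for $k\geq 3$ there is no ambient divisor to residuate against, so I would argue locally at $o$ by a Gr\"obner/combinatorial degeneration. In the affine chart at $o$ with coordinates $t_1\vvirg t_k$ the curve $\gamma_S$ is \emph{exactly} the line $\ell_S$ through the origin in direction $\mathbf 1_S$, so $C$ is locally the reduced union of lines $\bigcup_S\ell_S$, whose homogeneous ideal is the set of polynomials vanishing at every point $\mathbf 1_S$. I would then prove $I_{\bigcup\ell_S}\subseteq(t_1^2\vvirg t_k^2)=I_{Z_k}$: for a homogeneous $f=\sum_\alpha c_\alpha t^\alpha$ with $f(\mathbf 1_S)=0$ for all $S$, the quantities $F(S):=f(\mathbf 1_S)=\sum_{\op{supp}(\alpha)\subseteq S}c_\alpha$ all vanish, and M\"obius inversion over the Boolean lattice gives $\sum_{\op{supp}(\alpha)=S}c_\alpha=0$ for every $S$; taking $|S|$ equal to $\deg f$ forces the square-free coefficient $c_{\mathbf 1_S}=0$, i.e. $f\in(t_1^2\vvirg t_k^2)$. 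Hence $Z_k\subseteq C$, and since $T=W_{d_1}\ootimes W_{d_k}\in\langle\nu_{d_1\vvirg d_k}(Z_k)\rangle$ by Lemma \ref{lemma: cactus of Wd1 tatata Wdk}, we obtain $T\in E:=\langle\nu_{d_1\vvirg d_k}(C)\rangle$.

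It remains to bound $\dim E$, and I would do this through $h^0(\calO_C(d_1\vvirg d_k))$. Twisting the normalization sequence $0\to\calO_C\to\bigoplus_S\calO_{\gamma_S}\to\calO_o^{\oplus\delta_o}\to 0$ by $\calO_C(d_1\vvirg d_k)$, each component has positive degree so $h^1(\calO_{\gamma_S}(\cdots))=0$; furthermore the part of the gluing sheaf supported in jet-order $n$ is represented by the components $\gamma_S$ with $|S|>n$, and such components have degree $\sum_{i\in S}d_i\geq|S|>n$, hence carry the required order-$n$ jets. Thus the evaluation map is surjective jet-order by jet-order and $h^1(\calO_C(d_1\vvirg d_k))=0$. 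By Riemann--Roch on the (singular) curve $C$ this yields $h^0(\calO_C(d_1\vvirg d_k))=\deg\calO_C(d_1\vvirg d_k)+1-p_a(C)\leq 2^{k-1}(d_1+\cdots+d_k)$, and therefore $\dim E\leq h^0(\calO_C(d_1\vvirg d_k))-1\leq 2^{k-1}(d_1+\cdots+d_k)-1$.

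Finally I would convert the span bound into the rank bound as in Proposition \ref{prop: upper bound Wd1 ot Wd2}. The curve $\nu_{d_1\vvirg d_k}(C)$ is nondegenerate in $E$ by definition of $E$, so $E$ admits a basis consisting of points of $\nu_{d_1\vvirg d_k}(C)$; consequently every point of $E$, and in particular $T$, is a linear combination of at most $\dim E+1$ such points, giving $\nu_{d_1\vvirg d_k}(C)$-rank at most $\dim E+1$. Since $\nu_{d_1\vvirg d_k}(C)\subseteq\scrV_{d_1\vvirg d_k}$, this bounds the $\scrV_{d_1\vvirg d_k}$-rank as well, so
\[
R_{d_1\vvirg d_k}(T)\leq \dim E+1\leq 2^{k-1}(d_1+\cdots+d_k),
\]
which is the desired inequality. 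I expect the delicate step to be the scheme inclusion $Z_k\subseteq C$ of the second paragraph, where making the local union-of-lines picture rigorous (and, if one prefers general rather than small-diagonal curves, degenerating to it) is precisely what the combinatorial Gr\"obner argument accomplishes.
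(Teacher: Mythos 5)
Your curve $C$ is exactly the curve in the paper's proof of Theorem \ref{bo1}: your $\gamma_S$ are the paper's $B_\Lambda$, and both arguments hinge on the scheme-theoretic inclusion $Z_k\subseteq C$. Your proof of that inclusion, however, is genuinely different and arguably cleaner. The paper computes the full local ideal $I_{C^\circ}=(\eta_i\eta_j(\eta_i-\eta_j))$ by a Gr\"obner degeneration and a Hilbert-polynomial count; you prove only the inclusion $I_{C^\circ}\subseteq(t_1^2\vvirg t_k^2)$ that is actually needed, via M\"obius inversion over the Boolean lattice. This is correct: the local ideal of a cone is homogeneous, vanishing of a homogeneous $f$ on $\ell_S$ is equivalent to $f(\mathbf 1_S)=0$, and the inversion kills every square-free coefficient, which is all that membership in $(t_1^2\vvirg t_k^2)$ requires.

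Where you diverge from the paper is the endgame, and that is also where the one real weakness sits. The paper never touches the cohomology of the singular curve $C$: it writes $\langle\nu_{d_1\vvirg d_k}(C)\rangle=\sum_\Lambda\langle\nu_{d_1\vvirg d_k}(B_\Lambda)\rangle$, decomposes $T=\sum_\Lambda p_\Lambda$ with $p_\Lambda$ in the span of the rational normal curve $\nu_{d_1\vvirg d_k}(B_\Lambda)$, and bounds $R(p_\Lambda)\leq\sum_{i\in\Lambda}d_i$ by Sylvester; summing gives $2^{k-1}\sum d_i$ with no genus or $h^1$ input. You instead need both $p_a(C)\geq 1$ and $h^1(\calO_C(d_1\vvirg d_k))=0$. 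Your $\delta$-invariant count $\delta_o\geq(c-1)+(c-k)$ is correct (values plus tangent directions), so $p_a\geq 2^k-1-k\geq 1$ is fine. But the $h^1$ vanishing rests on the assertion that ``the part of the gluing sheaf supported in jet-order $n$ is represented by the components $\gamma_S$ with $|S|>n$,'' which you do not prove. Unwound, it says precisely that the points $\{\mathbf 1_S:|S|\leq n\}\subseteq\bbP^{k-1}$ impose independent conditions on forms of degree $n$, so that the cokernel of the degree-$n$ evaluation map is covered by tuples supported on the high-degree components. This is true --- for instance, the forms $\bigl(\prod_{i\in S}t_i\bigr)\bigl(\sum_i t_i\bigr)^{n-|S|}$ evaluate at the points $\mathbf 1_T$ in a triangular matrix with nonzero diagonal --- and it is the same Boolean-lattice trick you already used for $Z_k\subseteq C$, but as written it is a gap. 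Once that lemma is inserted your proof is complete; the comparison shows what the paper's decomposition trick buys: it sidesteps Riemann--Roch and $h^1$-vanishing on a curve with a rather bad singularity at $o$ entirely.
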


\begin{proof}
For every $\Lambda \subseteq \{ 1 \vvirg k\}$, let $\delta_\Lambda: \bbP^1 \to \bbP^1 \ttimes \bbP^1$ be the embedding defined by $\delta_\Lambda(z) = (\zeta_1 \vvirg \zeta_k)$ where $\zeta_i = z$ if $i \in \Lambda$ and $\zeta_i = o_1\in \mathbb{P}^1$ if $i \notin \Lambda$. For every $\Lambda$, let $B_\Lambda$ be the image of $\delta_\Lambda$: $B_\Lambda$ embeds in $\bbP(S^{d_1} \bbC^2 \ootimes S^{d_k} \bbC^2)$ via $\nu_{d_1 \vvirg d_k}$ and its image is a rational normal curve of degree $\sum_{i \in \Lambda} d_i$. In particular, for every $\Lambda$, $\dim (\langle \nu_{d_1 \vvirg d_k} (B_\Lambda) \rangle) \leq \sum_{i \in \Lambda} d_i$.

Let $C = \bigcup_{\Lambda \subseteq \{ 1 \vvirg k\}} B_\Lambda$. Notice that $B_{\Lambda_1} \neq B_{\Lambda_2}$ if $\Lambda_1 \neq \Lambda_2$, so $C$ is a reduced curve having $2^{k}-1$ irreducible components (note that $B_\emptyset = \{o\}$ is not a curve); moreover since $o \in B_\Lambda$ for every $\Lambda$, we have that $C$ is connected.  Our goal is to prove that $Z \subseteq C$. To do this, we work locally.

We consider local coordinates on $\bbP^1 \ttimes \bbP^1$ as follows: let $\xi_i,\eta_i$ be the basis dual to $x,y$ on the $i$-th copy of $\bbP^1$; after the dehomogenization $\xi_1 = \cdots = \xi_k = 1$ we consider $\eta_1 \vvirg \eta_k$ local coordinates on $\bbA^k = \bbC^k = (\bbP^1 \ttimes \bbP^1) \setminus \{( y \vvirg y) \}$. Let $R = \bbC[\bbA^k] = \bbC[\eta_1 \vvirg \eta_k]$ and write $R_t$ for the homogeneous component of degree $t$ and $R_{\leq t} = R/R_{t+1}$, the latter being a zero-dimensional ring, which is also a finite dimensional vector space. In these coordinates $Z$ is the scheme cut out by the ideal $I_Z = (\eta_1^2 \vvirg \eta_k^2)$ and the point $o$ is cut out by the ideal $I_{Z_{\red}} = \sqrt{I_Z} = (\eta_1 \vvirg \eta_k)$.  The coordinate ring $\bbC[Z] = R/ I_Z$ has a basis given by (the images in the quotient of) the square-free monomials of $R$; if $\Lambda \subseteq \{ 1 \vvirg k \}$, we denote $\eta^\Lambda = \prod_{i \in \Lambda} \eta_i$. 

Locally, the map $\delta_\Lambda $ embeds the affine line $\bbA^1$ (with a coordinate $\eta$) to the diagonal of the coordinate plane defined by $\Lambda$, namely $\delta(\eta) = (\zeta_1 \vvirg \zeta_k)$ where $\zeta_i = \eta$ if $i \in \Lambda$ and $\zeta_i = 0$ if $i \notin \Lambda$. Denote by $B_\Lambda^{\circ}$ the image of $\delta_\Lambda$ in $\bbA^k$, so that $I_{B_\Lambda^{\circ}} = (\eta_i : i \notin \Lambda)$; let $C^\circ = \bigcup_{\Lambda \subseteq \{ 1 \vvirg k\}} B_\Lambda ^\circ$.

\begin{claim*}
 We have $I_{C^\circ} = ( \eta_i \eta_j (\eta_i - \eta_j) : i,j = 1 \vvirg k)$.
\end{claim*}
\begin{quote}
\begin{proof}
Let $J = ( \eta_i \eta_j (\eta_i - \eta_j) : i,j = 1 \vvirg k) \subseteq R$. First we prove $\sqrt{J} = \sqrt{I_{C^\circ}}$, namely that $J$ and $I_{C^\circ}$ define the same variety set theoretically. Let $p \in C^\circ$: then we have $p \in B_\Lambda$ for some $\Lambda$, so that $\eta_i(p) = 0$ if $i \neq \Lambda$ and $\eta_i(p) = \eta_j(p)$ if $i,j \in \Lambda$; in particular, the generators of $J$ vanish at $p$. Conversely if $p$ is a point in the (support of the) scheme defined by $J$, then for every two indices $i,j$, we have $\eta_i(p) \eta_j(p) (\eta_i(p) - \eta_j(p)) = 0$, so either $\eta_i(p) = 0$, or $\eta_j(p) = 0$ or $\eta_i(p) = \eta_j(p)$; this shows $p \in C^\circ$. Now, since $C^\circ$ is reduced, we have $\sqrt{I_{C^\circ}} = I_{C^\circ}$, so $\sqrt{J} = I_{C^\circ}$ and therefore $J \subseteq I_{C^\circ}$. 

Since both $I_{C^\circ}$ and $J$ are homogeneous, we regard them as ideals of schemes in $\bbP^{k-1} = \bbP \bbC^k$. In particular, the projectivization of $C^\circ$ is the set of points $\{ (\zeta_1 \vvirg \zeta_k) \in \bbP^{k-1}: \zeta_i \in \{ 0,1 \}\}$ which is a zero-dimensional (smooth) scheme of degree $\deg(C^\circ) = 2^k -1$. So the Hilbert polynomial of $I_{C^\circ}$ is $\HP_{I_{C^\circ}} = 2^k-1$. Since $J \subseteq I_{C^\circ}$, we have $\HP_J \geq \HP_{I_{C^\circ}}$. We will prove that the opposite inequality holds as well, providing equality of Hilbert polynomials, and therefore of the ideals.

Consider the lexicographic monomial order on $\bbC[\eta_1 \vvirg \eta_k]$ (ordered according to the indices): denote by $\LT(J)$ the monomial ideal of leading terms of $J$ and by $U$ the monomial ideal generated by the leading terms of the binomials $\eta_i \eta_j (\eta_i - \eta_j) $ for every $i,j$. In particular $U = ( x_i^2 x_j : 1 \leq i< j \leq k )$ and $U \subseteq \LT(J)$. By \cite{CLO}, Prop. 4 in Ch.9, \S3, we have the equality of Hilbert polynomials $\HP_{\LT(J)} = \HP_{J}$ and since $U \subseteq \LT(J)$, we have $\HP_U \geq \HP_{\LT(J)}$. We will show that $U$ has constant Hilbert polynomial $\HP_U = 2^k-1$; this will conclude the proof.

Fix $t \gg k$. We prove $\HP_U(t) = 2^k-1$. Let $\alpha$ be a multi-index with $\vert \alpha \vert = t$ and let $\eta^\alpha$ be the corresponding monomials. We have $\eta^\alpha \in U_t$ if and only if there are $i,j$ with $i<j$ such that $\alpha_i \geq 2$ and $\alpha_j \geq 1$. In particular, $\HP_U(t) = \dim \bbC[\eta_1 \vvirg \eta_k]_t / U_t$ is equal to the cardinality of the set $\calA_t = \bigcup_{\ell = 1 \vvirg k} \calA_{t,\ell}$, where
\[
  \calA_{t,\ell} = \{ \alpha \in \bbN^k : \alpha_i = 0,1 \text{ if $i< \ell$}, \ \alpha_\ell \geq 2 \text{ and } \alpha_i = 0 \text{ if $i > \ell$} \}.
\]
It is clear that the $\calA_{t,\ell}$'s are disjoint. The elements of $\calA_{t,\ell}$ are in one-to-one correspondence with the subsets of $\{ 1 \vvirg \ell-1\}$, so $\vert \calA_{t,\ell} \vert = 2^{\ell-1}$. We conclude $\vert \calA_t \vert = \sum_{\ell = 1}^k  \vert \calA_{t,\ell} \vert = \sum_{\ell = 1}^k 2^{\ell -1} = 2^k -1$. This shows $\HP_U = 2^k-1$ and we conclude.
\end{proof}
\end{quote}

From the Claim above, notice that $I_Z \supseteq I_{C^\circ}$, so we obtain $Z \subseteq C^\circ \subseteq C$. We obtain $T \in \langle \nu_{d_1 \vvirg d_k}(C) \rangle$ and therefore $R_{d_1 \vvirg d_k}(T) \leq R_{\nu_{d_1\vvirg d_k}(C)}(T)$. Moreover $\langle  \nu_{d_1 \vvirg d_k}(C) \rangle$ \- $  = $ $\sum_{\Lambda \in \{ 1 \vvirg k\} } \langle  \nu_{d_1 \vvirg d_k}(B_\Lambda) \rangle$. In particular, for every $\Lambda$, there exists $p_\Lambda \in \langle \nu_{d_1 \vvirg d_k}(B_\Lambda) \rangle$ such that $T \in \langle p_\Lambda : \Lambda \subseteq \{ 1 \vvirg k\} \rangle$.

For every $\Lambda$, we have $R_{\nu_{d_1 \vvirg d_k}(C)} (p_\Lambda) \leq R_{\nu_{d_1 \vvirg d_k}(B_\Lambda)} (p_\Lambda)\leq \sum_{i \in \Lambda} d_i$, because $\nu_{d_1 \vvirg d_k}(B_\Lambda)$ is a rational curve of degree $\sum_{i \in \Lambda} d_i$. We conclude 
\[
 R_{\nu_{d_1 \vvirg d_k}(C)} (T) \leq \sum_{\Lambda \subseteq \{ 1 \vvirg k\}} R_{\nu_{d_1 \vvirg d_k}(C)} (p_\Lambda) \leq \sum_{\Lambda \subseteq \{ 1 \vvirg k\}} \left( \textsum_{i \in \Lambda} d_i \right) = 2^{k-1}(d_1 + \cdots + d_k).
\]
\end{proof}

In the case $d_i=d$ for all $i$, we obtain the following result.

\begin{corollary}\label{bo3}
It $k \geq 2$ and $d \geq3$, then $R_{d \vvirg d}(W_d^{\otimes k})\leq 2^{k-1} k d$. 
\end{corollary}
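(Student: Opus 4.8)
The plan is to derive this statement as a direct specialization of Theorem \ref{bo1}. Setting $d_1 = \cdots = d_k = d$, the hypotheses $k \geq 2$ and $d_1 \vvirg d_k \geq 3$ of the theorem reduce exactly to $k \geq 2$ and $d \geq 3$, so they are satisfied. Under this substitution the tensor $W_{d_1} \ootimes W_{d_k}$ is precisely $W_d^{\otimes k}$, and the quantity controlling the bound becomes $d_1 + \cdots + d_k = kd$. Plugging into the conclusion $R_{d_1 \vvirg d_k}(W_{d_1} \ootimes W_{d_k}) \leq 2^{k-1}(d_1 + \cdots + d_k)$ of Theorem \ref{bo1} immediately yields $R_{d \vvirg d}(W_d^{\otimes k}) \leq 2^{k-1} kd$, as desired. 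Since the result follows from the already-established theorem by pure substitution, there is no genuine obstacle to overcome here; all of the real work has already been carried out in the proof of Theorem \ref{bo1}, in particular in the Gr\"obner degeneration argument establishing the inclusion $Z \subseteq C$.

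To make the arithmetic transparent, one may also recount the bound directly from the construction in the proof of Theorem \ref{bo1}. There the rank is estimated by summing, over all subsets $\Lambda \subseteq \{ 1 \vvirg k\}$, the degree $\sum_{i \in \Lambda} d_i$ of the rational normal curve $\nu_{d_1 \vvirg d_k}(B_\Lambda)$. When every $d_i = d$, each such degree is $d \lvert \Lambda \rvert$, so the total is
\[
\sum_{\Lambda \subseteq \{ 1 \vvirg k\}} d \lvert \Lambda \rvert = d \sum_{m=0}^{k} m \binom{k}{m} = d \cdot k 2^{k-1} = 2^{k-1} k d,
\]
using the standard identity $\sum_{m=0}^k m \binom{k}{m} = k 2^{k-1}$. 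This recovers the stated bound and confirms the specialization.
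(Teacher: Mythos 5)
Your proof is correct and is exactly the paper's own route: the paper states Corollary \ref{bo3} as an immediate specialization of Theorem \ref{bo1} with $d_1 = \cdots = d_k = d$, giving $2^{k-1}(d_1+\cdots+d_k) = 2^{k-1}kd$, and provides no separate argument. Your supplementary count $\sum_{\Lambda \subseteq \{1 \vvirg k\}} d\lvert\Lambda\rvert = dk2^{k-1}$ is a correct (if unnecessary) re-verification of the final summation already carried out in the proof of Theorem \ref{bo1}.
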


We point out that part of the proof of Theorem \ref{bo1} does not require the underlying field to be $\bbC$. In particular we have

\begin{remark}\label{bo2}
The first part of the proof of Theorem \ref{bo1} and the Claim is valid on every field $\bbF$. The last part of the proof requires the underlying field to be $\bbC$ because it uses that if $B$ is a rational normal curve of degree $d$ then every element of $\langle B \rangle$ satisfies $R_B(p) \leq d$. However, we can use a slightly different argument to obtain almost the same upper bound if $\vert \bbF \vert \geq d_1 + \cdots + d_k + 1$. Indeed, for every $\Lambda \subseteq \{ 1 \vvirg k\}$, let $d_\Lambda = \sum_{i \in \Lambda}d_i$ and let $\Omega_\Lambda = \{ q^{(\Lambda)}_0 \vvirg q^{(\Lambda)}_{d_\Lambda}\}$ be $d_\Lambda+1$ distinct points of $\nu_{d_1 \vvirg d_k}(B_\Lambda)$ with $q_0^{(\Lambda)} = o$. Then $W_{d_1} \ootimes W_{d_k} \in \left\langle \bigcup_{\Lambda} \Omega_\Lambda \right\rangle$ and the union contains at most $1 + 2^{k-1} (d_1 + \cdots + d_k)$ points because $o \in \Omega_\Lambda$ for every $\Lambda$. We conclude $R_{d_1 \vvirg d_k}( W_{d_1} \ootimes W_{d_k}) \leq 1 + 2^{k-1}(d_1 + \cdots + d_k)$ over every field $\mathbb{F}$ with $\vert \bbF \vert \geq d_1 + \cdots + d_k + 1$.
\end{remark}

\subsection{Other tensors}\label{subsec: upper bounds others}

The next result deals with tensors of low cactus rank spanned by zero-dimensional schemes supported at $o_1$. For every $b$, denote by $Z[b]$ the zero-dimensional scheme of degree $b$ in $\bbP^1$, supported at $o_1$. In particular, in the coordinates $\partial_x,\partial_y$ dual to $x,y$, we have $I_{Z[b]} = (\partial_y^b)$; moreover for every $d \geq b$, we have $h^0( \calI_{Z[b]}(d)) = d-b+1$ and $h^1 (\calI_{Z[b]}(d)) = 0$. 

Fix $d$ and $b$ with $b \leq \lfloor \frac{d}{2} \rfloor$ and let $T_{d,b}$ be a tensor in $S^d \bbC^2$ with cactus rank $c_d(T_{d,b}) = b$ and $T_{d,b} \in \langle \nu_{d} (Z[b]) \rangle$. For instance $T_{d,2} = W_d$ (from Lemma \ref{lemma: cactus of Wd1 tatata Wdk}) and $T_{d,3} = x^{d-2} y^2$ (see e.g. Remark \ref{remark: sylvester thm}). By Sylvester Theorem, we have $R_{d}(T_{d,b}) = d+2 -b$.

The argument followed in the proof of the next result is similar to the one of Proposition \ref{prop: upper bound Wd1 ot Wd2} and Theorem \ref{bo1}: we determine a curve in $\bbP^1 \times \bbP^1$ containing the zero-dimensional scheme spanning the tensor of interest, and we give a bound on the rank providing a spanning set of the linear span of such curve.

\begin{proposition}\label{b1}
Fix integers $d_1,d_2,b_1,b_2$ with $2 \leq b_i \leq \frac{d_i}{2}$. Let $T_i = T_{d_i,b_i}$ and $T = T_1 \otimes T_2$. Then $R_{d_1,d_2}(T)\leq  d_1 b_2 + d_1 b_2 + b_1b_2-1$.
\end{proposition}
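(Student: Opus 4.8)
The plan is to follow the strategy of Proposition~\ref{prop: upper bound Wd1 ot Wd2}: exhibit a reduced and connected curve $C \subseteq \bbP^1 \times \bbP^1$ containing the zero-dimensional scheme that spans $T$, and then bound $R_{\nu_{d_1,d_2}(C)}(T)$ by the dimension of the linear span $\langle \nu_{d_1,d_2}(C)\rangle$. First I would identify the spanning scheme. Since $T_i \in \langle\nu_{d_i}(Z[b_i])\rangle$ by definition of $T_{d_i,b_i}$, exactly as in the proof of Lemma~\ref{lemma: cactus of Wd1 tatata Wdk} one has $\langle\nu_{d_1}(Z[b_1])\rangle \otimes \langle\nu_{d_2}(Z[b_2])\rangle = \langle\nu_{d_1,d_2}(W)\rangle$, where $W := Z[b_1]\times Z[b_2]$; hence $T \in \langle\nu_{d_1,d_2}(W)\rangle$. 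In the local coordinates $\eta_1,\eta_2$ at $o$ used in Theorem~\ref{bo1}, the scheme $W$ is cut out by $I_W = (\eta_1^{b_1},\eta_2^{b_2})$ and $\deg(W) = b_1 b_2$.

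Next I would produce the curve. A natural choice is the divisor $C = \{\xi_2^{b_2}\eta_1^{b_1} + \xi_1^{b_1}\eta_2^{b_2} = 0\}\in \vert\calO_{\bbP^1\times\bbP^1}(b_1,b_2)\vert$, where $\xi_i,\eta_i$ are the bihomogeneous coordinates dual to $x,y$ on the $i$-th factor. Dehomogenizing at $o$, its local equation is $\eta_1^{b_1}+\eta_2^{b_2}\in (\eta_1^{b_1},\eta_2^{b_2}) = I_W$, so $W \subseteq C$; since the partial derivatives of the defining form vanish only at finitely many points, the singular locus of $C$ is finite, so $C$ is reduced, and $h^0(\calO_C) = 1$ shows it is connected. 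Because $C$ is a divisor of bidegree $(b_1,b_2)$ we have $\calI_C(d_1,d_2) = \calO_{\bbP^1\times\bbP^1}(d_1-b_1,d_2-b_2)$, and the hypothesis $b_i \le d_i/2$ guarantees $d_i - b_i \ge 0$, so $h^1(\calI_C(d_1,d_2)) = 0$ and $h^0(\calI_C(d_1,d_2)) = (d_1-b_1+1)(d_2-b_2+1)$. Consequently
\[
\dim\langle\nu_{d_1,d_2}(C)\rangle = (d_1+1)(d_2+1)-1-(d_1-b_1+1)(d_2-b_2+1) = b_1 d_2 + b_2 d_1 + b_1 + b_2 - b_1 b_2 - 1
\]
(as a sanity check, for $b_1=b_2=2$ this recovers the value $2d_1+2d_2-1$ of Proposition~\ref{prop: upper bound Wd1 ot Wd2}).

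Finally I would run the rank estimate as in Proposition~\ref{prop: upper bound Wd1 ot Wd2}. Set $E = \langle\nu_{d_1,d_2}(C)\rangle$. Since $C$ is reduced and connected, $h^0(\calO_{\nu_{d_1,d_2}(C)}) = 1$ and the restriction exact sequence of $\nu_{d_1,d_2}(C)$ in $E$ yields $h^1(\calI_{\nu_{d_1,d_2}(C),E}) = 0$. For a general hyperplane $H \subseteq E$ through $T$ the analogue of the exact sequence \eqref{eqn: exact sequence in Prop Rd1d2}, together with $h^0(\calI_{\nu_{d_1,d_2}(C),E}(1)) = 0$ (as $E$ is the span), gives $h^0(\calI_{H\cap\nu_{d_1,d_2}(C),H}(1)) = 0$, i.e. $H\cap\nu_{d_1,d_2}(C)$ spans $H$. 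Choosing $\dim E$ of these points that still span $H$ and using $T \in \langle\nu_{d_1,d_2}(W)\rangle \subseteq E$ with $T\in H$, we conclude $R_{\nu_{d_1,d_2}(C)}(T)\le \dim E$. As $\nu_{d_1,d_2}(C)\subseteq \nu_{d_1,d_2}(\bbP^1\times\bbP^1)$, this yields $R_{d_1,d_2}(T)\le b_1 d_2 + b_2 d_1 + b_1 + b_2 - b_1 b_2 - 1$, which is at most the claimed $d_1 b_2 + d_2 b_1 + b_1 b_2 - 1$ because $b_1+b_2 \le 2 b_1 b_2$.

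The main obstacle is the verification that $C$ contains $W$ scheme-theoretically while staying reduced and connected --- the analogue of the Claim $Z\subseteq C$ in Proposition~\ref{prop: upper bound Wd1 ot Wd2} --- together with the vanishing $h^1(\calI_C(d_1,d_2))=0$ needed to pin down $\dim E$, which is precisely where the hypothesis $b_i\le d_i/2$ enters. If one prefers to avoid the explicit equation, the same conclusion follows by taking a general member of the $(b_1+b_2+1)$-dimensional linear system of bidegree-$(b_1,b_2)$ forms vanishing on $W$ and checking reducedness and connectedness; either way the remaining cohomological bookkeeping is routine and parallels Proposition~\ref{prop: upper bound Wd1 ot Wd2}.
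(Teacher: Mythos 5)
Your proof is correct, and it deviates from the paper's in one substantive way: where you exhibit the explicit curve $C=\{\xi_2^{b_2}\eta_1^{b_1}+\xi_1^{b_1}\eta_2^{b_2}=0\}$ and check reducedness by computing that its singular locus is the finite set $\{([x],[x]),([y],[y])\}$, the paper instead takes $C$ to be a \emph{generic} member of the linear system $\vert \calI_{Z[b_1]\times Z[b_2]}(b_1,b_2)\vert$ (of projective dimension $b_1+b_2$, by K\"unneth), so that containment of the spanning scheme is automatic, and then proves reducedness via Bertini's theorem combined with an analysis of the base locus (a multiple component would have to be $L_1^e$ or $L_2^e$, which is excluded by a dimension count on the residual system). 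Your explicit construction is more elementary --- it avoids Bertini entirely and makes connectedness transparent via $h^0(\calO_C)=1$ --- at the cost of a direct computation with partial derivatives; the paper's generic choice is shorter to state but hides the work in the base-locus Claim. The remaining steps (span of $Z[b_1]\times Z[b_2]$ via the K\"unneth-type factorization as in Lemma \ref{lemma: cactus of Wd1 tatata Wdk}, computation of $\dim\langle\nu_{d_1,d_2}(C)\rangle$ from the restriction sequence, and the generic-hyperplane-section argument of Proposition \ref{prop: upper bound Wd1 ot Wd2} giving rank at most the span dimension) coincide with the paper's. One further point in your favor: your arithmetic is the correct one. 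The paper asserts $h^0(\calO_C(d_1,d_2)) = d_1b_2+d_2b_1+b_1b_2$, but expanding $(d_1+1)(d_2+1)-(d_1-b_1+1)(d_2-b_2+1)$ gives $d_1b_2+d_2b_1+b_1+b_2-b_1b_2$, exactly your value; since $b_1+b_2\leq 2b_1b_2$, your bound $R_{d_1,d_2}(T)\leq d_1b_2+d_2b_1+b_1+b_2-b_1b_2-1$ is slightly stronger than, and implies, the stated inequality.
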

\begin{proof}
Let $B_i = Z[b_i]$ be a zero-dimensional scheme evincing $c_{d_i}(T_i)$. Let $B = B_1 \times B_2 \in \bbP^1 \times \bbP^1$.  By K\"unneth's formula, we have $h^0(\calI_{B}(b_1,b_2)) = b_1 + b_2 + 1$. Let $C$ be a generic element in $\vert \calI_{B}(b_1,b_2) \vert$. 

\begin{claim*} $C$ is reduced.
\end{claim*}
\begin{quote}
\begin{proof}[Proof of the Claim]
Since $C$ is effective and Cartier, if $C$ is not reduced, then it has at least one irreducible component $D'$ appearing with multiplicity $e\ge 2$. Since $\bbP^1 \times \bbP^1$ is smooth, by Bertini's Theorem (see e.g. \cite[Corollary III.10.9 and Remark III.10.9.2]{hart}), $C$ is smooth away from the base locus of $|\Ii _B(b_1,b_2)|$; since $C$ is not smooth along $D'^e$, we deduce that $D'^e$ is contained in the base locus. Now, $L_1^{b_1} L_2^{b_2}\in |\Ii _B(b_1,b_2)|$; by definition of base locus, we have that $L_1^{b_1}L_2^{b_2}$ contains the base locus, and in particular $D'^e \subseteq  L_1^{b_1} L_2^{b_2}$. This shows $D'^e = L_1^{c_1} L_2^{c_2}$ for some $c_1,c_2 \geq 0$ with $c_1 + c_2 = e$.

Since $D'$ is irreducible, we deduce that either $D' =L_1$ and $e = c_1 \le b_1$ or $D'= L_2$ and $e = c_2 \leq b_2$. Without loss of generality, assume $D' = L_1$. We obtain that $L_1^e$ is contained in the base locus of $|\Ii _B(b_1,b_2)|$, namely $|\Ii _B(b_1,b_2)| = L_1^e \cdot \vert \Ii _{\Res_{L_1^e}(B)} (b_1-e,b_2) \vert$. Notice that $\Res_{L_1^e}(B) = \Res_{L_1^e}(Z[b_1] \times Z[b_2]) = Z[b_1-e] \times Z[b_2]$. Again by K\"unneth's formula, we have $h^0 (\Ii _{\Res_{L_1^e}(B)} (b_1-e,b_2)) = b_1 -e + b_2 + 1$; but since $h^0 (\Ii _B(b_1,b_2)) = h^0( \Ii _{\Res_{L_1^e}(B)} (b_1-e,b_2))$, we obtain $b_1 - e + b_2 + 1 = b_1  + b_2 + 1$, providing $e = 0$ which is a contradiction. 
\end{proof}\end{quote}

As in the proof of Theorem \ref{bo1}, we have that $R_{d_1 , d_2}(T) \leq R_{\nu_{d_1,d_2} ( D )}(T) \leq \dim \langle \nu_{d_1,d_2} ( D )\rangle$. Using $h^1(\Oo _{\bbP^1 \times \bbP^1}(d_1-b_1,d_2-b_2)) =0$, the restriction exact sequence of $D$ in $\bbP^1 \times \bbP^1$ provides $1 + \dim \langle \nu_{d_1,d_2} ( D )\rangle = h^0(\Oo _D(d_1,d_2)) = h^0(\Oo _{\bbP^1 \times \bbP^1}(d_1,d_2)) -h^0(\Oo _{\bbP^1 \times \bbP^1}(d_1- b_1,d_2-b_1)) = (d_1+1)(d_2+1)-(d_1-b_1+1)(d_2-b_2+1) = d_1 b_2 + d_2 b_1 + b_1b_2$.
Hence, $R_{d_1 ,d_2}(T)\le d_1 b_2 + d_1 b_2 + b_1b_2-1$. 
\end{proof}

\section{Lower bounds on the partially symmetric rank}\label{section: lower bounds}

In this section, we provide lower bounds on partially symmetric rank of products of $W$-states.

In the following lemma, we show that if $A \subseteq (\bbP^1)^k$ is a zero-dimensional scheme such that $W_{d_1} \ootimes W_{d_k} \in \langle \nu_{d_1 \vvirg d_k}(A) \rangle$, then $A$ and $Z$ have the same equations up to multidegree $(d_1-1 \vvirg d_k-1)$. This can be interpreted as a minimality result of $Z$ among zero-dimensional schemes evincing the cactus rank of products of $W$-states.
\begin{lemma}\label{lemma: I222 of Z is maximal among schemes spanning T}
Let $k$ be a positive integer and let $d_1 \vvirg d_k \geq 3$ be nonnegative. Let $T:= W_{d_1} \ootimes W_{d_k}$. Let $A \subseteq \bbP^1 \ttimes \bbP^1$ be a zero-dimensional scheme such that $T \in \langle \nu_{d_1 \vvirg d_k}(A) \rangle$. Then $\vert \calI_A(a_1 \vvirg a_k ) \vert \subseteq \vert \calI_Z(a_1 \vvirg a_k ) \vert $ for every choice of $a_i$ with $0 \leq a_i \leq d_i-1$. 
\end{lemma}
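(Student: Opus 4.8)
The plan is to translate the statement into apolarity and reduce it to a linear-algebra bookkeeping in the apolarity pairing. With respect to the apolarity pairing between $S^{d_1}\bbC^2 \ootimes S^{d_k}\bbC^2$ and its dual $S^{d_1}\bbC^{2*} \ootimes S^{d_k}\bbC^{2*}$, the linear forms vanishing on $\nu_{d_1 \vvirg d_k}(A)$ are exactly the multidegree $(d_1 \vvirg d_k)$ part of $I_A$. Hence the hypothesis $T \in \langle \nu_{d_1 \vvirg d_k}(A)\rangle$ is equivalent to $(I_A)_{(d_1 \vvirg d_k)} \subseteq (\ann T)_{(d_1 \vvirg d_k)}$, i.e. $\langle D, T\rangle = 0$ for every form $D$ of multidegree $(d_1 \vvirg d_k)$ in $I_A$. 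On the target side, in the dual coordinates $\partial_{x_i},\partial_{y_i}$ on the $i$-th factor we have $I_Z = (\partial_{y_1}^2 \vvirg \partial_{y_k}^2)$. Thus, writing a form of multidegree $(a_1 \vvirg a_k)$ as $F = \sum_{e} c_e \prod_{i=1}^k \partial_{x_i}^{a_i - e_i}\partial_{y_i}^{e_i}$ with $0 \leq e_i \leq a_i$, membership $F \in (I_Z)_{(a_1 \vvirg a_k)}$ is equivalent to the vanishing of the coefficients $c_e$ for all multi-indices $e$ with $e_i \leq 1$ (the monomials that are square-free in each $\partial_{y_i}$); every other monomial already lies in $I_Z$. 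So the goal becomes: for $F \in (I_A)_{(a_1 \vvirg a_k)}$, show $c_e = 0$ for all $e \in \{0,1\}^k$.

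The computational engine is the factorization of the pairing. Since $T = W_{d_1} \ootimes W_{d_k}$ with $W_{d_i} = x_i^{d_i-1}y_i$, for any monomial $\prod_i \partial_{x_i}^{p_i}\partial_{y_i}^{q_i}$ with $p_i + q_i = d_i$ one has $\langle \prod_i \partial_{x_i}^{p_i}\partial_{y_i}^{q_i}, T\rangle = \prod_i \langle \partial_{x_i}^{p_i}\partial_{y_i}^{q_i}, x_i^{d_i-1}y_i\rangle$, and each factor is nonzero if and only if $(p_i,q_i)=(d_i-1,1)$, in which case it equals $(d_i-1)!$. The selection mechanism is then the following: set $\beta_i := d_i - a_i$, so that $\beta_i \geq 1$ by the hypothesis $a_i \leq d_i-1$, and for a fixed target $e \in \{0,1\}^k$ introduce the multiplier $N(e) := \prod_{i=1}^k \partial_{x_i}^{\beta_i - 1 + e_i}\partial_{y_i}^{1-e_i}$. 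The inequalities $0 \leq \beta_i - 1 + e_i \leq \beta_i$ show that $N(e)$ is a genuine monomial of multidegree $(\beta_1 \vvirg \beta_k)$.

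The argument then runs as follows. Fix $F \in (I_A)_{(a_1 \vvirg a_k)}$ and fix $e \in \{0,1\}^k$. The product $F\cdot N(e)$ has multidegree $(d_1 \vvirg d_k)$ and lies in $(I_A)_{(d_1 \vvirg d_k)}$ because $I_A$ is an ideal, so $\langle F\cdot N(e), T\rangle = 0$. Expanding, the monomial of $F$ with index $e'$ multiplied by $N(e)$ becomes $\prod_i \partial_{x_i}^{d_i - 1 - e_i' + e_i}\partial_{y_i}^{e_i' + 1 - e_i}$, whose pairing against $T$ is nonzero only if the $\partial_{y_i}$-exponent equals $1$ for every $i$, i.e. $e_i' = e_i$. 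Hence the only surviving term is $e' = e$, and $0 = \langle F\cdot N(e), T\rangle = c_e \prod_i (d_i-1)!$. As $\prod_i (d_i-1)! \neq 0$, we conclude $c_e = 0$. Letting $e$ range over $\{0,1\}^k$ kills all $y$-square-free coefficients of $F$, giving $F \in (I_Z)_{(a_1 \vvirg a_k)}$, which is exactly the inclusion $\vert \calI_A(a_1 \vvirg a_k)\vert \subseteq \vert \calI_Z(a_1 \vvirg a_k)\vert$.

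The crux of the proof — and the only place where the degree restriction $a_i \leq d_i - 1$ enters — is the selection step: one must check that $N(e)$ is a legitimate monomial of the complementary multidegree (so that $F\cdot N(e)$ is forced into the top-degree piece $(I_A)_{(d_1 \vvirg d_k)}$ on which the apolarity hypothesis bites) and that it isolates precisely the single coefficient $c_e$. The condition $\beta_i \geq 1$ is exactly what guarantees both that $\partial_{y_i}^{1-e_i}$ and $\partial_{x_i}^{\beta_i-1+e_i}$ carry nonnegative exponents; were some $a_i$ equal to $d_i$, the required $\partial_{y_i}$-exponent would become negative for $e_i = 1$ and the clean isolation would break down. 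Everything else is routine multilinear bookkeeping, and notably the argument needs no hypothesis on the geometry of $A$ (such as linear independence or $h^1$-vanishing) beyond $T \in \langle \nu_{d_1 \vvirg d_k}(A)\rangle$.
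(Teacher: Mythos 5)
Your proof is correct, but it takes a genuinely different route from the paper's. The paper argues geometrically: after reducing to a minimal $A$, it treats the top case $a_i = d_i - 1$ by applying its cohomological Lemma \ref{lemma: 5.1 plus} (residual exact sequences plus $h^1$-vanishing) to a divisor $D \in \vert\calI_A(d_1 - 1 \vvirg d_k - 1)\vert$ and the scheme $\nu_{d_1\vvirg d_k}(Z)$, concluding $Z \subseteq D$; the case of general $a_i$ then follows by multiplying $D$ by an auxiliary divisor $G$ of complementary multidegree chosen to avoid $A \cup Z$. You instead translate everything through multigraded apolarity --- $T \in \langle \nu_{d_1\vvirg d_k}(A)\rangle$ iff $(I_A)_{(d_1 \vvirg d_k)}$ annihilates $T$ --- and then exploit the monomial structure of $T$ to isolate, for each $y$-square-free index $e$, the coefficient $c_e$ of $F \in (I_A)_{(a_1\vvirg a_k)}$ by multiplying with the monomial $N(e)$ of complementary multidegree. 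Your computation is sound (the isolation step checks out, and the condition $\beta_i \geq 1$ is indeed exactly the hypothesis $a_i \leq d_i - 1$); your argument needs no minimality reduction and no sheaf cohomology, handles all $a_i$ at once, and makes visible why the bound $a_i \leq d_i-1$ is sharp, since for $a_i = d_i$ the selection monomial would need a negative exponent and the statement in fact fails there. What the paper's route buys is uniformity with the rest of its toolkit: Lemma \ref{lemma: 5.1 plus} is reused elsewhere, and the cohomological argument does not depend on $T$ being a product of monomials, whereas your selection mechanism uses $W_{d_i} = x_i^{d_i-1}y_i$ in an essential way. The only ingredient you assert without justification is the apolarity dictionary itself, namely that the linear forms vanishing on the scheme $\nu_{d_1\vvirg d_k}(A)$ correspond exactly to $H^0(\calI_A(d_1 \vvirg d_k))$, the multidegree-$(d_1 \vvirg d_k)$ part of the saturated ideal; this is standard (it is the Segre--Veronese analogue of the classical apolarity lemma, and is consistent with the paper's convention that the span of a scheme is cut out by the degree-one part of its ideal), but a line of proof or a citation, e.g.\ to the multigraded apolarity results in \cite{gal}, should be added.
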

\begin{proof}
If $A' \subseteq A$ then $\vert \calI_A(a_1 \vvirg a_k) \vert \subseteq \vert \calI_{A'}(a_1 \vvirg a_k)\vert$, so, without loss of generality, we may assume that $A$ is minimal, in the sense that there is no $A' \subsetneq A$ with $T \in \langle \nu_{3 \vvirg 3}(A') \rangle$.

First assume $a_i = d_i-1$ for every $i$. If $\vert \calI_A (d_1-1 \vvirg d_k-1) \vert = \emptyset$, then there is nothing to prove. Let $D \in \vert \calI_A (d_1-1 \vvirg d_k-1) \vert$; in particular we have $\Res_D(A \cup Z) = \Res_D(Z)$.

We have $h^1(\calI_Z(1 \vvirg 1)) = 0$, therefore $h^1(\calI_{\Res_D(Z)}(1 \vvirg 1) ) = 0$ as well. Since $\calI_D \simeq \calO_{(\bbP^1)^{\times k}}(1-d_1 \vvirg 1-d_k)$, we deduce $\calI_{\Res_D(Z)}(1 \vvirg 1) \simeq  \calI_{\Res_D(Z)} (d_1 \vvirg d_k) \otimes \calI_D$. Moreover $\calI_{\Res_D(Z)} (d_1 \vvirg d_k) \simeq \calI_{\nu_{d_1 \vvirg d_k}(\Res_D(Z))}(1)$. Since we have the equality $\nu_{d_1 \vvirg d_k} (\Res_D (Z)) = \Res_{\nu_{d_1 \vvirg d_k}(D)}(\nu_{d_1 \vvirg d_k}(Z))$, we apply Lemma \ref{lemma: 5.1 plus} with $\nu_{d_1 \vvirg d_k}(Z)$ playing the role of $B$ and $\nu_{d_1 \vvirg d_k}(D)$ playing the role of $D$. We obtain $\nu_{d_1 \vvirg d_k}(Z) \subseteq \nu_{d_1 \vvirg d_k}(D)$, and equivalently $Z \subseteq D$, providing $D \in \vert \calI_Z( d_1 -1 \vvirg d_k-1)\vert$.

Now, suppose $a_i \leq d_i-1$ arbitrary. Let $D \in \vert \calI_A (a_1 \vvirg a_k) \vert$. Since $\vert \calO_{\bbP^1 \ttimes \bbP^1}(d_1-1-a_1 \vvirg d_k-1-a_k) \vert $ has no base points and $A \cup Z$ has finite support, there exists $G \in \vert \calO_{\bbP^1 \ttimes \bbP^1}(d_1-1 - a_1 \vvirg d_k-1-a_k) \vert$ such that $G \cap (A \cup Z) = \emptyset$. Now, $DG \in \vert\calI_A(d_1-1 \vvirg d_k-1) \vert$ and we conclude using the first part of the proof.
\end{proof}

The following result is a weaker version of Theorem 10 in \cite{cf}: it gives the same lower bound as \cite{cf} for the tensor product $W_3 \otimes W_3$, but only restricting to the partially symmetric case. Our proof uses completely different techniques: we essentially perform a case by case analysis on the different possible bi-degrees of a divisor on $\bbP^1 \times \bbP^1$; some of the arguments that we use are completely general and may be found useful to address other problems in the partially symmetric setting.

\begin{proposition}\label{fingercrossed}
We have $R_{3,3}(W_3\otimes W_3) = 8$.
\end{proposition}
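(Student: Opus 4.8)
The upper bound $R_{3,3}(W_3\otimes W_3)\le 8$ is already in hand: it is the case $k=2$ of Theorem \ref{thm: rank upper bound W3 tatata W3}, which gives $(2+2)\cdot 2^{2-1}=8$. So the plan is to establish the matching lower bound $R_{3,3}(W_3\otimes W_3)\ge 8$. I would argue by contradiction: suppose $r:=R_{3,3}(T)\le 7$ for $T=W_3\otimes W_3$, and fix a set $S\subseteq\bbP^1\times\bbP^1$ of $r$ distinct points, minimal with respect to the property $T\in\langle\nu_{3,3}(S)\rangle$ (so no proper subset of $S$ spans $T$). Recall from Lemma \ref{lemma: cactus of Wd1 tatata Wdk} that $T\in\langle\nu_{3,3}(Z)\rangle$ with $Z=Z_2$ of degree $4$, and that $Z$ minimally spans $T$. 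Since $r\ge c_{3,3}(T)=4$, the (reduced) scheme $S$ and the (non-reduced) scheme $Z$ are distinct.

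Applying Lemma \ref{lemma: union minimals have h^1} to the two minimally spanning schemes $Z$ and $S$ (under the identification of $\calO(1)$ on the Segre--Veronese variety with $\calO(3,3)$ on $\bbP^1\times\bbP^1$) yields $h^1(\calI_{Z\cup S}(3,3))>0$. Set $W:=Z\cup S$, so that $\deg W\le \deg Z+\deg S\le 4+7=11$, which is strictly smaller than $h^0(\calO(3,3))=16$. The whole strategy is then to contradict $h^1(\calI_W(3,3))>0$ by exhibiting an effective divisor $D$ for which the residual exact sequence \eqref{eqn: residual exact sequence general}
\[
0\to\calI_{\Res_D(W)}(3,3)\otimes\calI_D\to\calI_W(3,3)\to\calI_{W\cap D,D}(3,3)\to 0
\]
forces $h^1(\calI_W(3,3))=0$, by making both the residual term $h^1(\calI_{\Res_D(W)}(3-a,3-b))$ and the restriction term $h^1(\calI_{W\cap D,D}(3,3))$ vanish, where $(a,b)$ is the bidegree of $D$.

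The core of the argument is a \emph{case analysis} on the bidegree $(a,b)$ of a divisor $D$ concentrating the points of $W$. The control comes from two facts. First, restricting $\calO(3,3)$ to a ruling line $R$ (bidegree $(1,0)$ or $(0,1)$) gives $\calO_{\bbP^1}(3)$ with $h^0=4$, so the restriction term vanishes as soon as $\deg(W\cap R)\le 4$, by \eqref{eqn: degree as residue plus intersection} and the $h^1$ estimate following the restriction sequence. Second, Lemma \ref{lemma: I222 of Z is maximal among schemes spanning T} shows that any divisor of bidegree $(a,b)$ with $a,b\le 2$ through $S$ must also contain $Z$, tying the location of the points of $S$ to the non-reduced structure of $Z$ at $o$. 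Using $\deg W\le 11$ together with the explicit ideal $I_Z=(\eta_1^2,\eta_2^2)$ (so that both $Z\cap L_i$ and $\Res_{L_i}(Z)$ have degree $2$), I would run through the possible bidegrees $(1,0),(0,1),(1,1),(2,0),(0,2),(2,1),(1,2),(2,2)$ for $D$: in each case, bound how many points of $W$ lie on $D$ via B\'ezout on $\bbP^1\times\bbP^1$, residuate, and invoke Remark \ref{inclusion} and Remark \ref{lemma: h1 - h1 vs deg - deg} to reduce to strictly smaller bidegree, until reaching a base case where a ruling line would be forced to carry at least $5$ points of $W$ --- incompatible with $\deg W\le 11$, the degree-$2$ contribution of $Z$ to any ruling line through $o$, and the minimality of $S$.

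The main obstacle I anticipate lies in the genuinely special configurations, in particular the case where the offending divisor is a smooth $(1,1)$-conic, or where several points of $S$ align on a single ruling line through $o$. These resist an immediate ``many points on a line'' contradiction: a $(1,1)$-conic meets each ruling in a single point, yet $\calO(3,3)$ restricts to $\calO_{\bbP^1}(6)$ on it, so carrying $\ge 8$ points of $W$ already forces $h^1>0$. Handling these requires applying Lemma \ref{lemma: 5.1 plus} (with $\nu_{3,3}(Z)$ in the role of $B$, using $h^1(\calI_{\nu_{3,3}(Z)}(1))=0$) together with Lemma \ref{lemma: I222 of Z is maximal among schemes spanning T} to transfer the spanning condition from $S$ onto the fat scheme $Z$, while carefully tracking whether $S$ meets the support $o$ of $Z$ (which alters $\deg(Z\cap S)$ and hence $\deg W$). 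It is precisely this bookkeeping across the mixed bidegrees, rather than any single decisive step, that forms the technical heart of the proof.
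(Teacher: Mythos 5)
Your upper bound is fine (Theorem \ref{thm: rank upper bound W3 tatata W3} with $k=2$ gives $8$, matching the paper's citation of \cite{cjz}), and your general strategy for the lower bound --- a case analysis over bidegrees of a low-bidegree auxiliary divisor, driven by Lemma \ref{lemma: I222 of Z is maximal among schemes spanning T}, Lemma \ref{lemma: 5.1 plus}, minimality of $S$, and B\'ezout --- is the same in spirit as the paper's. But there is a genuine gap at the center: you never produce the divisor $D$, nor establish any disjunction that your list of bidegrees exhausts. Announcing a case analysis on ``a divisor $D$ concentrating the points of $W$'' presupposes exactly what must be proved, namely that the failure of $h^1(\calI_W(3,3))=0$ forces $W$ to sit on a curve of bidegree at most $(2,2)$; no such concentration lemma is in the paper, and you do not prove one. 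The paper manufactures the divisor concretely: since $\deg S=7<8=\dim\vert\calO_{\bbP^1\times\bbP^1}(2,2)\vert$, the system $\vert\calI_S(2,2)\vert$ is at least a pencil; by Lemma \ref{lemma: I222 of Z is maximal among schemes spanning T} every member contains $E=Z\cup S$, whose degree is at least $10$; and two $(2,2)$-curves meet in a scheme of degree $2\cdot 2+2\cdot 2=8<10$ unless they share a component. Hence the base locus of $\vert\calI_S(2,2)\vert$ contains a divisor $B\in\vert\calO(a,b)\vert$ with $a,b\le 2$, and the five cases are the possible values of this specific, well-defined $(a,b)$. Without this step the Horace-style bookkeeping you defer to has nothing to iterate on; it is the key idea, not a technicality.

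A second gap is your starting point. The paper first imports $R_{1,1,1}(W_3\otimes W_3)\ge 7$ from \cite{YCGD10}, so it only needs to exclude a set $S$ of exactly $7$ points; you instead assume only $r\le 7$, so you must also exclude $r=4,5,6$. This is not cosmetic: the divisor-producing mechanism above needs $\deg(Z\cup S)\ge 9$, i.e.\ $\deg S\ge 6$, so for $r\le 5$ it breaks down and your sketch offers no substitute. (Small $r$ can be handled --- e.g.\ an argument in the style of Proposition \ref{corr1} shows any spanning scheme of degree $\le 5$ must contain the non-reduced scheme $Z$, impossible for a reduced $S$ --- but that argument appears later in the paper and would have to be supplied, not assumed.) As written, the proposal flags the hard configurations and correctly names the lemmas that resolve them, but the two ingredients that make the contradiction run --- the construction of $B$ and the baseline that pins $\deg S$ --- are missing.
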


\begin{proof}

By \cite{cjz}, we have $R_{3,3}(T) \leq 8$.

In \cite{YCGD10}, the bound $R_{1,1,1}( W_3 \otimes W_3) \geq 7$ is given, hence we have $R_{3,3} (T) \geq 7$. Suppose by contradiction $R_{3,3} (T) = 7$ and let $S$ be the set of 7 distinct points in $\PP^1\times \PP^1$ computing the rank of $T$. Let $E = Z \cup S$. From Lemma \ref{lemma: I222 of Z is maximal among schemes spanning T}, we have $\vert \calI_S(2,2) \vert \subseteq \vert \calI_Z(2,2) \vert$, and therefore $\vert \calI_S(2,2) \vert = \vert \calI_E(2,2) \vert$ since $E = Z \cup S$.

Observe that $\deg(E) \geq 10$. Indeed, $Z$ is supported at the single point $o$, and either $o \in S$ or $o \notin S$; in the first case we have $\deg(E) = 10$ and in the second case we have $\deg(E) = 11$.

By Bezout's Theorem in $\bbP^1 \times \bbP^1$, two elements in $\vert \calO_{\bbP^1 \times \bbP^1}(2,2) \vert$ either intersect in a zero-dimensional scheme of degree $2 \cdot 2 + 2\cdot 2  = 8$ or their intersection contains a divisor. The intersection of any two elements of $\vert \calI_S(2,2) \vert$ contains $E$ and $\deg(E) \geq 10$, so $\vert \calI_S(2,2)\vert$ has positive dimensional base locus. Let $B$ be the union of the positive dimensional components of the base locus of $\vert \calI_S(2,2) \vert$; in particular $B$ is a divisor on $\bbP^1 \times \bbP^1$, therefore $B \in \vert \calO_{\bbP^1 \times \bbP^1}(a,b) \vert$ for some $a,b$ such that $ a,b \leq 2$. In particular, we have $\vert \calI_B(2,2) \vert = B \cdot \vert \calO_{\bbP^1 \times \bbP^1} (2-a,2-b)\vert$ and $\vert \calI_E( 2,2) \vert = B \cdot \vert \calI_{\Res_B(E)}(2-a,2-b) \vert$. By definition of $B$, $\vert \calI_{\Res_B(E)}(2-a,2-b)\vert $ does not contain any divisor in its base locus. Again by Bezout's Theorem in $\bbP^1 \times \bbP^1$, two generic elements in $\vert \calO_{\bbP^1 \times \bbP^1}(2-a,2-b) \vert$ intersect in a zero-dimensional scheme of degree $ 2(2-a)(2-b)$, therefore $\deg (\Res_B(E)) \leq 2(2-a)(2-b)$.

For every possible pair $(a,b)$, we find a contradiction and this will conclude the proof. Let $L_1,L_2$ as in Section \ref{section: Preliminaries} (Notation).

\begin{enumerate}
\item $(a,b) = (2,2)$. In this case, we have $\{ B\} = \vert \calI_B(2,2) \vert = \vert \calI_E(2,2) \vert = \vert \calI_S(2,2) \vert$. On the other hand $\dim \vert \calO_{\bbP^1 \times \bbP^1}(2,2) \vert = 8$ and $\deg(S) = 7$, so $\dim \vert \calI_S(2,2) \vert \geq 1$. This gives a contradiction to $\{ B\} = \vert \calI_B(2,2) \vert$ as $\dim\{ B \} = 0$.

 \item $(a,b) \in \{ (2,1) ,(1,2) \}$. Up to exchanging the roles of the two factors, assume $(a,b) = (2,1)$. We obtain $\deg (\Res_B(E) ) = 0$, so $E \subseteq B$. Since the double point $2o$ is contained in $Z \subseteq E \subseteq B$, we deduce that $B$ is singular at $o$. Elements of $\vert \calO_{\bbP^1 \times \bbP^1} (2,1) \vert$ that are singular at $o$ are of one of the following forms: either $L_1^2  L_2'$ for some $L_2' \in \vert \calO_{\bbP^1 \times \bbP^1}(0,1) \vert$ or $L_1L_1'L_2$ for some $L' \in \vert \calO_{\bbP^1 \times \bbP^1} (1,0)\vert$. However, since $Z \subseteq B$, the second case can only occur if $L_1' = L_1$ because $Z$ is not contained in $L_1 L_2$; therefore in either case, we have $B = L_1^2 L_2'$ for some $L_2' \in \vert \calO_{\bbP^1 \times \bbP^1}(0,1) \vert$ (possibly $L_2' = L_2$). Since $S$ is reduced, we have $S \subseteq B_{\red} = L_1 L_2'$. Since $\deg(L_1 \cap Z) = 2$, we have $\deg( \Res_{L_1 L_2'} (Z) ) \leq 2$, and therefore $\deg( \Res_{L_1  L_2'} (E)) \leq 2$, that provides $h^1(\Ii _{\Res _{L_1 L_2'}(E)}(2,2)) =0$ (see e.g. \cite[Prop. 7.3]{hart}). By Lemma \ref{lemma: 5.1 plus}, we conclude $Z \subseteq L_1 L_2'$, which is a contradiction because the zero-dimensional scheme supported at $o$ contained in $L_1 L_2'$ is either $\{o \}$ itself (if $L_2 \neq L_2'$) or $2o$ (if $L_2 = L_2'$).
 
 \item $(a,b) = (1,1)$. We obtain $\deg(\Res_B(E)) = 2$ and therefore $h^1(\Ii _{\Res_B(E)}(2,2))=0$ (again by \cite[Prop. 7.3]{hart}). By Lemma \ref{lemma: 5.1 plus} we have $Z\subset B$, which is false, because $h^0(\Ii _Z(1,1)) =0$.

 \item $(a,b) \in \{(2,0),(0,2)\}$. Up to exchanging the roles of the two factors, assume $(a,b) = (2,0)$. We obtain $\deg(\Res_B(E)) = 0$, namely $E \subseteq B$ and therefore $Z \subseteq B$. This implies $B = L_1^2$, and since $S$ is reduced, we obtain $S \subseteq B_{\red} = L_1$, that gives $\langle S \rangle = L_1$, and in particular $T \in \langle \nu_{3,3}(L_1)\rangle$, which is false because elements in $\langle \nu_{3,3}(L_1)\rangle$ are of the form $[F \otimes x^3]$ for some $F \in S^3 \bbC^2$. 
 
 \item $(a,b) \in \{ (1,0),(0,1)\}$. Up to exchanging the roles of the two factors, assume $(a,b) = (1,0)$. We obtain $\deg(\Res_B(E)) = 4$. Since $10 \leq \deg (E) \leq 11$, we obtain $6 \leq \deg(\Res_B(E)) \leq 7$. By minimality of $S$, we have $\deg(S \cap B) \leq 4$ because any subset $S'$ in $B$ has $h^1(\calI_{S'}(3,3)) > 0$, whereas $h^1(\calI_S(3,3)) = 0$. This shows $B \cap Z \neq \emptyset$, and therefore $B = L_1$. We have $\deg(Z \cap L_1) = 2$, so $\Res_{L_1} (Z) = 2$. On the other hand, since $\deg(L_1 \cap S) \leq 4$, we have $\deg( \Res_{L_1}(S)) \geq 3$ and $o \notin  \Res_{L_1}(S)$ because $o \in L_1$. This shows $\Res_{L_1} (E) = \Res_{L_1}(S) \sqcup \Res_{L_1}(Z)$ and passing to the degrees $\deg(\Res_{L_1} (E)) = \deg(\Res_{L_1}(S)) + \deg(\Res_{L_1}(Z)) \geq 3 + 2 = 5$, whereas $\deg(\Res_B(E)) = 4$. This gives a contradiction.
 \end{enumerate}
 \end{proof}

The lower bound in the following proposition applies to tensor rank.
 
 \begin{proposition}\label{lower1}
Fix nonnegative integers $d_i$, $1\le i \le k$. Then 
\[
R_{1 \vvirg 1}(W_{d_1}\otimes \cdots \otimes W_{d_k}) \geq d_1+\cdots +d_k-k+1 .  
\]
\end{proposition}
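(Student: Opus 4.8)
The plan is to prove this lower bound on the tensor rank $R_{1 \vvirg 1}$ by the \emph{substitution method} combined with an induction on $d_1 + \cdots + d_k$. A flattening argument alone cannot work here: every flattening of a single $W$-state $W_d$ has rank $2$, so flattenings only recover its border rank and never detect that its tensor rank equals $d$. The substitution method instead peels off one tensor factor at a time, decreasing the rank by exactly $1$ at each step. The starting observation is the identity, valid at the level of tensors in $(\bbC^2)^{\otimes d}$,
\[
W_d = x \otimes W_{d-1} + y \otimes x^{\otimes(d-1)},
\]
obtained by separating the summands of $W_d = \textsum_{j=1}^d x^{\otimes(j-1)} \otimes y \otimes x^{\otimes(d-j)}$ according to whether the first tensor slot is $x$ or $y$.

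For the inductive step I may assume, after reordering the factors, that $d_1 \geq 2$ (if all $d_i = 1$ we are in the base case). Writing $T = W_{d_1} \ootimes W_{d_k}$ and applying the identity above to the first slot of the first block, I obtain $T = x \otimes M_x + y \otimes M_y$ with
\[
M_x = W_{d_1 - 1} \otimes W_{d_2} \ootimes W_{d_k}, \qquad M_y = x^{\otimes(d_1 - 1)} \otimes W_{d_2} \ootimes W_{d_k}.
\]
Since $M_y \neq 0$, the substitution method yields $R_{1 \vvirg 1}(T) \geq 1 + \min_{\lambda \in \bbC} R_{1 \vvirg 1}(M_x - \lambda M_y)$. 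The crucial point is that the residual tensor is, for \emph{every} value of $\lambda$, a product of $W$-states up to a linear change of coordinates: indeed $M_x - \lambda M_y = (W_{d_1-1} - \lambda x^{\otimes(d_1-1)}) \otimes W_{d_2} \ootimes W_{d_k}$, and the first factor corresponds to the binary form $x^{d_1 - 2}(y - \lambda x)$, which is sent to $x^{d_1-2}y = W_{d_1-1}$ by the invertible substitution $y \mapsto y - \lambda x$ applied simultaneously to each of the $d_1 - 1$ slots of the first block. As tensor rank is invariant under the action of $\mathrm{GL}_2$ on each factor, we get $R_{1 \vvirg 1}(M_x - \lambda M_y) = R_{1 \vvirg 1}(W_{d_1 - 1} \otimes W_{d_2} \ootimes W_{d_k})$ for all $\lambda$, so the induction hypothesis applies to this tensor, whose total degree is $\textsum_i d_i - 1$.

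Chaining the inequalities then gives
\[
R_{1 \vvirg 1}(T) \geq 1 + R_{1 \vvirg 1}(W_{d_1-1} \otimes W_{d_2}\ootimes W_{d_k}) \geq 1 + \bigl( (d_1 - 1) + d_2 + \cdots + d_k - k + 1\bigr) = d_1 + \cdots + d_k - k + 1.
\]
The base case is $d_1 = \cdots = d_k = 1$, where $T = y^{\otimes k}$ has rank $1 = \textsum_i d_i - k + 1$. I expect the main obstacle to be the careful application of the substitution method to \emph{multipartite} tensor rank (rather than to a single bipartite flattening), together with the verification that $W_{d_1-1} - \lambda x^{\otimes(d_1-1)}$ is a genuine $W$-state up to change of basis \emph{uniformly in $\lambda$}; it is precisely this uniformity that lets the induction hypothesis be invoked without knowing which scalar $\lambda$ the substitution method produces.
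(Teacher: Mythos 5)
Your proof is correct, and it takes a genuinely different route from the paper's. The paper's proof is a one-shot reduction: it defines a linear map $\phi: \bbC^2 \otimes \bbC^2 \to \bbC^2$ and $\Phi = \id_{\bbC^2}^{\otimes d_1-1} \otimes \phi \otimes \id_{\bbC^2}^{\otimes d_2-1}$ merging the last slot of the first block with the first slot of the second, checks that $\Phi$ maps the Segre variety into the Segre variety (so tensor rank cannot increase under $\Phi$), computes $\Phi(W_{d_1} \otimes W_{d_2}) = W_{d_1+d_2-1}$, and then invokes the known fact $R_{1 \vvirg 1}(W_N) = N$; the general case follows by induction on $k$, merging adjacent blocks. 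Your substitution-method induction instead peels off one slot at a time and never uses the known rank of a single $W$-state; indeed, for $k=1$ your argument re-derives $R_{1 \vvirg 1}(W_d) \geq d$ from scratch. Both proofs are at bottom ``rank does not increase under linear maps'' arguments, but the paper's map is fixed in advance, while yours depends on the unknown optimal decomposition through the scalar $\lambda$ --- which is exactly why your uniformity observation (that $W_{d_1-1} - \lambda x^{\otimes(d_1-1)}$ is $\mathrm{GL}_2$-equivalent to $W_{d_1-1}$ for every $\lambda$) is the crux, and it is correct. (A trivial normalization point: with the paper's convention $W_{d-1} \leftrightarrow x^{d-2}y$, the tensor $W_{d_1-1} - \lambda x^{\otimes(d_1-1)}$ corresponds to the form $x^{d_1-2}\bigl(y - \tfrac{\lambda}{d_1-1}x\bigr)$ rather than $x^{d_1-2}(y-\lambda x)$; this is harmless since $\lambda$ is arbitrary.) The multipartite substitution lemma you flag as the main obstacle does hold verbatim for any number of factors: if $T = \textsum_{j=1}^r u_j \otimes b_j$ is an optimal decomposition with $u_j = \alpha_j x + \beta_j y$ and $\beta_r \neq 0$, then $\lambda = \alpha_r/\beta_r$ kills the $r$-th term of $M_x - \lambda M_y$, leaving a sum of $r-1$ rank-one tensors in the remaining factors. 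What the paper's route buys is brevity (given the cited rank of $W$-states) plus the extra insight that a product of $W$-states restricts to a single long $W$-state; what yours buys is a self-contained elementary argument that needs no external input.
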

\begin{proof}
Fix $k = 2$ and consider $W_{d_1} \otimes W_{d_2} \in S^{d_1} \bbC^2 \otimes S^{d_2} \bbC^2 \subseteq (\bbC^2)^{\otimes d_1} \otimes (\bbC^2)^{\otimes d_2}$. Define a linear map $\phi: \bbC^2 \otimes \bbC^2 \to \bbC^2 $ via $\phi(\ell_1 \otimes \ell_2) = (\partial_{x_1} \ell_1 \partial_{y_2} \ell_2 + \partial_{y_1} \ell_1 \partial_{x_2} \ell_2 )  x + (\partial_{y_1} \ell_1 \partial_{y_2} \ell_2) y$ where we consider basis $x_i,y_i$ on the two copies of $\bbC^2$ defining the domain and $x,y$ for the codomain.

Let $\Phi : (\bbC^2)^{\otimes d_1} \otimes (\bbC^2)^{\otimes d_2} \to (\bbC^2)^{\otimes d_1-1} \otimes \bbC \otimes (\bbC^2)^{\otimes d_2-1}$ be defined by $\Phi = \id_{\bbC^2}^{\otimes d_1-1} \otimes \phi \otimes  \id_{\bbC^2}^{\otimes d_2-1}$, namely just performing $\phi$ on the tensor product of the last factor of $(\bbC^2)^{\otimes d_1}$ and the first factor of $(\bbC^2)^{\otimes d_2}$. It is immediate that the Segre variety $\nu_{1 \vvirg 1} ( (\bbP^1 )^{d_1 + d_2}) \subseteq (\bbC^2)^{\otimes d_1 + d_2}$ is mapped via $\Phi$ to the Segre variety $\nu_{1 \vvirg 1} ( (\bbP^1 )^{d_1 + d_2-1})$ in $(\bbC^2)^{\otimes d_1 + d_2 -1}$.

Write $W_{d_1} \otimes W_{d_2} = \left(W_{d_1 - 1} \otimes x_1 + x_1^{\otimes d_1-1} \otimes y_1 \right) \otimes \left(x_2 \otimes W_{d_2-1} + y_2 \otimes x_2^{\otimes d_2-1}\right)$. We obtain
\[
 \Phi (W_{d_1} \otimes W_{d_2}) = \left( W_{d_1 - 1} \otimes x \otimes x_2^{\otimes d_2-1} \right)+ \left( x_1^{\otimes d_1-1} \otimes x \otimes W_{d_2-1} \right)+ \left( x_1^{\otimes d_1-1} \otimes y \otimes x_2^{\otimes d_2-1}\right).
\]
Notice that $\Phi (W_{d_1} \otimes W_{d_2})$ is a symmetric tensor in $(\bbC^{2})^{\otimes d_1 +d_2 -1}$. After the identification $x_1 \leftrightarrow x_2 \leftrightarrow x$, $y_1 \leftrightarrow y_2 \leftrightarrow y$, we have $\Phi (W_{d_1} \otimes W_{d_2}) = W_{d_1 + d_2 - 1} \in S^{d_1 + d_2 - 1} \bbC^2 \subseteq ( \bbC^2 ) ^{\otimes d_1 + d_2 -1}$. In particular, the tensor rank of $\Phi (W_{d_1} \otimes W_{d_2})$ is $R_{1 \vvirg 1}( W_{d_1+d_2-1}) = d_1 + d_2 -1$. This provides the lower bound $R_{1 \vvirg 1}(W_{d_1} \otimes W_{d_2}) \geq d_1 + d_2 -1$. The general result is obtained by induction on $k$.
\end{proof}

We conclude this section with a multiplicativity result. 

\begin{proposition}\label{prop: multiplicative with W2}
We have $R_{2,d}(W_2 \otimes W_d) =2d$ for every $d\geq 2$.
\end{proposition}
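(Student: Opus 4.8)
The plan is to prove the two inequalities $R_{2,d}(W_2\otimes W_d)\le 2d$ and $R_{2,d}(W_2\otimes W_d)\ge 2d$ separately. The upper bound is immediate from submultiplicativity: since $R_2(W_2)=2$ and $R_d(W_d)=d$, the trivial bound of the Introduction gives $R_{2,d}(W_2\otimes W_d)\le R_2(W_2)\cdot R_d(W_d)=2d$ (concretely, writing $W_2=\tfrac14((x+y)^2-(x-y)^2)$ and distributing a Sylvester decomposition $W_d=\sum_{i=1}^d\lambda_i v_i^{\otimes d}$). The whole content is therefore the lower bound. Note that symmetric flattenings of $W_2\otimes W_d$ only recover the cactus rank bound $4$ of Lemma \ref{lemma: cactus of Wd1 tatata Wdk}, so the argument must exploit that we decompose into \emph{reduced} points; it must be of Sylvester type.

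First I would set up a structural reduction. Let $S$ be a set of $r=R_{2,d}(T)$ distinct points computing the rank, so $T=\sum_{i=1}^r u_i^{\otimes 2}\otimes v_i^{\otimes d}$. Grouping the summands according to the distinct classes $[w_1]\vvirg[w_m]\in\bbP^1$ of the second factors, I rewrite $T=\sum_{j=1}^m Q_j\otimes w_j^{\otimes d}$ with $Q_j\in S^2\bbC^2$ and the $[w_j]$ distinct. Minimality forces the size of the $j$-th group to equal $R_2(Q_j)\in\{1,2\}$, so each $Q_j\ne 0$ and $r=\sum_{j=1}^m R_2(Q_j)$. The key identity is obtained by contracting the first factor: for every $\omega\in S^2\bbC^{2*}$,
\[
\sum_{j=1}^m \langle\omega,Q_j\rangle\, w_j^{\otimes d}=\langle\omega,W_2\rangle\, W_d .
\]
Whenever $\langle\omega,W_2\rangle\ne 0$ the right-hand side is a nonzero multiple of $W_d$, so by Sylvester's Theorem ($R_d(W_d)=d$) at least $d$ of the coefficients $\langle\omega,Q_j\rangle$ are nonzero.

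The core is then to show $\sum_j R_2(Q_j)\ge 2d$. Write $P$ for the set of indices with $R_2(Q_j)=1$ (so $Q_j=\ell_j^{\otimes 2}$ is a square) and $R$ for those with $R_2(Q_j)=2$; then $r=|P|+2|R|$, and I must prove the degenerate terms are few, namely $|P|\le 2(m-d)$. The clean case is when the rank-one directions $[\ell_j]$ take at most two values $[\ell],[\ell']$, splitting $P=P_\ell\sqcup P_{\ell'}$. For each I choose $\omega$ vanishing on $\ell^{\otimes2}$ (resp.\ $\ell'^{\otimes 2}$) but not on $W_2$ (possible since $W_2$ has rank $2$); applying the identity and Sylvester annihilates exactly the corresponding rank-one terms and yields $|P_{\ell'}|+|R|\ge d$ and $|P_{\ell}|+|R|\ge d$, whose sum is precisely $|P|+2|R|\ge 2d$.

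The main obstacle is the remaining case, in which the squares $\ell_j^{\otimes2}$ point in three or more directions. Then the $Q_j$ span all of $S^2\bbC^2$, which forces $m\ge d+3$, and a single linear functional can annihilate at most two of these directions (a line meets the conic of squares in $\le 2$ points), so summing two Sylvester inequalities no longer suffices. I expect to settle this case either by the degree bound obtained from expressing the discriminants $\mathrm{disc}(Q_j)$ as the values at the $w_j$ of a single polynomial of degree $2(m-d)$ whose roots contain $P$, or—in the spirit of Proposition \ref{fingercrossed}—by a cohomological case analysis of the base locus of $\vert\calI_S(2,d)\vert$ on $\bbP^1\times\bbP^1$, residuating along the rulings $L_i$ and invoking Lemma \ref{lemma: 5.1 plus} together with Lemma \ref{lemma: I222 of Z is maximal among schemes spanning T}.
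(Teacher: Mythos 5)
Your upper bound, the grouping reduction ($T=\sum_{j=1}^m Q_j\otimes w_j^{\otimes d}$ with group sizes equal to $R_2(Q_j)$, hence $r=|P|+2|R|$), the contraction identity, and the case where the rank-one terms $\ell_j^{\otimes 2}$ involve at most two directions are all correct. But the proof stops exactly at the main difficulty: when the $\ell_j$ point in three or more directions you write only that you ``expect to settle this case'' by one of two unexecuted strategies. This is a genuine gap, not a deferred verification, because --- as you yourself note --- a functional $\omega\in S^2\bbC^{2*}$ vanishes on at most two points of the conic of squares, so the Sylvester-contraction mechanism powering the rest of your argument is structurally incapable of closing this case; a new ingredient is required, and it is the whole content of the lower bound. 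The discriminant idea can in fact be pushed through, but the sentence you devote to it conceals everything that needs proof. Writing $w_j=t_jx+y$ and $h(t)=\prod_j(t-t_j)$, and $Q_j=\alpha_jx^2+\beta_jxy+\gamma_jy^2$, matching coefficients against $W_2\otimes W_d=xy\otimes W_d$ gives $\sum_j\alpha_jw_j^{\otimes d}=\sum_j\gamma_jw_j^{\otimes d}=0$ and $\sum_j\beta_jw_j^{\otimes d}=W_d$. One must then show (i) every relation $(a_j)$ among the $w_j^{\otimes d}$ has the form $a_j=q(t_j)/h'(t_j)$ with $\deg q\le m-d-2$ (a residue/interpolation computation), and (ii) $\beta_j=q_\beta(t_j)/h'(t_j)$ with $\deg q_\beta\le m-d$ and nonzero leading coefficient, so that $D=q_\beta^2-4q_\alpha q_\gamma$ is a nonzero polynomial of degree exactly $2(m-d)$ whose roots contain the $t_j$ with $j\in P$, giving $|P|\le 2(m-d)$. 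Step (ii) is true only because $W_d$ lies in $\langle x^d,x^{d-1}y\rangle$ (its degree-two cactus scheme is supported at $[x]$) \emph{and} only after placing $[x]$ at $t=\infty$: with $[x]$ at a finite point the required degree of $q_\beta$ jumps to $m-2$ and the bound fails for $d>2$. One must also treat separately the degenerate possibility that some $w_j=[x]$. None of this analysis appears in the proposal.

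For comparison, the paper's proof is entirely different and much shorter: for $d\ge3$, if $\deg(S)\le 2d-1$ then $\vert\calI_S(1,d-1)\vert\neq\emptyset$ since $h^0(\calO_{\bbP^1\times\bbP^1}(1,d-1))=2d$; Lemma \ref{lemma: I222 of Z is maximal among schemes spanning T} forces $\vert\calI_S(1,d-1)\vert\subseteq\vert\calI_Z(1,d-1)\vert=L_2^2\cdot\vert\calO_{\bbP^1\times\bbP^1}(1,d-3)\vert$, so any divisor through $S$ of bidegree $(1,d-1)$ is divisible by the double ruling $L_2^2$; since $S$ is reduced it lies in the reduced support, which is again a divisor through $S$ of lower bidegree, and repeating the argument contradicts reducedness (the case $d=2$ is handled by $c_{2,2}(W_2\otimes W_2)=4$ from Lemma \ref{lemma: cactus of Wd1 tatata Wdk}). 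The ``cohomological alternative'' you mention in passing is essentially this argument, and note it runs on Lemma \ref{lemma: I222 of Z is maximal among schemes spanning T} rather than on a base-locus case analysis as in Proposition \ref{fingercrossed}. So either carry out the residue computation sketched above in full, or adopt the divisorial route; as written, the statement is unproved.
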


\begin{proof}
By submultiplicativity, $R_{2,d}(W_2\otimes W_d)\le 2d$. For $d = 2$, the statement is true, because $c_{2,2} (W_2 \otimes W_2) = 4$ by Lemma \ref{lemma: cactus of Wd1 tatata Wdk}.

Let $d \geq 3$ and assume by contradiction $R_{2,d}(W_2\otimes W_d)<2d$. Let $S \subseteq \bbP^1 \times \bbP^1$ be a finite set of points evincing $R_{2,d}(W_2\otimes W_d)$.

Since $\deg(S) \le 2d-1 = \dim \vert \Oo _{\bbP^1 \times \bbP^1}(1,d-1) \vert$, we have $\vert \Ii _S(1,d-1) \vert \neq \emptyset$. Moreover, Lemma \ref{lemma: I222 of Z is maximal among schemes spanning T}, implies $\vert \Ii _S(1,d-1) \vert \subseteq \vert \Ii _Z(1,d-1) \vert$. We claim that $\vert \calI_{Z}(1,d-1) \vert = L_2^2 \cdot \vert \calO_{\bbP^1 \times \bbP^1} (1,d-3) \vert$: one inclusion is clear because $Z \subseteq L_2^2$; the other inclusion follows by a dimension count, since $\deg(Z) = 4$, $h^1(\Ii _Z(1,d-1)) =0$, and $h^0(\Oo _{\bbP^1 \times \bbP^1}(1,d-1)) - h^0(\Oo _{\bbP^1 \times \bbP^1}(1,d-3)) =4$.

Let $G \in \vert\calI_S(1,d-1) \vert \subseteq \vert\calI_Z(1,d-1) \vert$ and write $G = L_2^e \cdot F$ for some uniquely determined $e \geq 2$ and $F \in \vert \calO_{\bbP^1 \times \bbP^1} (1,d-1-e) \vert$ with $F \nsupseteq L_2$. Since $S$ is reduced, we have $S\subset G_{\red} = F_{\red} \cdot L_2 \in \vert \calI_S (1,d-1-a)\vert$ for some $a \geq 1$. By Lemma \ref{lemma: I222 of Z is maximal among schemes spanning T}, $ \vert \calI_S (1,d-1-a)\vert \subseteq  \vert \calI_Z(1,d-1-a)\vert$, and therefore we obtain $G_{\red} \in \vert \calI_Z (1,d-1-a)\vert$; by the same argument that we used above, we have $\vert \calI_Z (1,d-1-a)\vert = L_2^2 \cdot \vert \calI_Z (1,d-3-a)\vert$, but this gives a contradiction because $G_{red}$ is reduced but $L_2^2$ is not.
\end{proof}

We mention that since all elements of $S^2 \bbC^2$ are equivalent up to the action of $GL_2$, the $W_2$ in Proposition \ref{prop: multiplicative with W2} can be replaced with $x^2 + y^2$: in particular Proposition \ref{prop: multiplicative with W2} answers, in the partially symmetric setting, the case of Open Problems 16.1 in \cite{cf} where (in the notation of \cite{cf}) $d = k = 2$.

\section{Uniqueness results}\label{section: uniqueness}

In this section, we study uniqueness properties of $Z$ among zero-dimensional schemes whose linear span contains the product of $W$-states.

\begin{theorem}\label{thm: Z unique}
Let $k \geq 1$ and $d_i\ge 3$, $1\le i \le k$. Let $T = W_{d_1}\otimes \cdots \otimes W_{d_k}$. Then $Z_k$ is the only scheme evincing the cactus rank of $T$.
\end{theorem}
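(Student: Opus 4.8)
The plan is to show that any zero-dimensional scheme $A$ evincing the cactus rank of $T$ must coincide with $Z$ (we drop the index $k$). By Lemma \ref{lemma: cactus of Wd1 tatata Wdk} the cactus rank equals $2^k$, so such an $A$ is precisely a scheme with $\deg(A) = 2^k$ and $T \in \langle \nu_{d_1 \vvirg d_k}(A) \rangle$; the goal is to prove $A = Z$. I would set $a = (d_1-1 \vvirg d_k-1)$, noting that $a_i \geq 2$ since $d_i \geq 3$. The starting point is Lemma \ref{lemma: I222 of Z is maximal among schemes spanning T}, which yields the inclusion of linear systems $\vert \calI_A(a) \vert \subseteq \vert \calI_Z(a) \vert$, equivalently $H^0(\calI_A(a)) \subseteq H^0(\calI_Z(a))$ inside $H^0(\calO(a))$.

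First I would upgrade this inclusion to an equality by a dimension count. Since $Z = Z_1 \ttimes Z_1$, the restriction map $H^0(\calO(a)) \to H^0(\calO_Z(a))$ factors by K\"unneth as a tensor product of the maps $H^0(\calO_{\bbP^1}(a_i)) \to H^0(\calO_{Z_1}(a_i))$; each of these is surjective because $a_i \geq 1$ and the double point $Z_1$ imposes two independent conditions on binary forms of degree at least $1$. Hence $h^1(\calI_Z(a)) = 0$ and $\dim H^0(\calI_Z(a)) = \prod_i (a_i+1) - 2^k$. On the other hand, the restriction exact sequence of $A$ together with $h^1(\calO(a)) = 0$ and $h^0(\calO_A(a)) = \deg(A) = 2^k$ gives $\dim H^0(\calI_A(a)) = \prod_i(a_i+1) - 2^k + h^1(\calI_A(a))$. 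Comparing with the inclusion above forces $h^1(\calI_A(a)) = 0$ and $H^0(\calI_A(a)) = H^0(\calI_Z(a))$.

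To conclude, I would identify the scheme-theoretic base locus of this common linear system. Since each of $A$ and $Z$ is contained (scheme-theoretically) in the base locus of the sections vanishing on it, both are contained in the common base scheme $W$ of $\vert \calI_A(a)\vert = \vert \calI_Z(a)\vert$, and it remains to check $W = Z$. Recall that $Z = \bigcap_{i=1}^k Q_i$ with $Q_i \in \vert \calO_{(\bbP^1)^k}(0 \vvirg 0,2,0\vvirg 0)\vert$, and that the local description $I_Z = (\eta_1^2 \vvirg \eta_k^2)$ shows these $Q_i$ generate the ideal sheaf $\calI_Z$. Multiplying each $Q_i$ by the globally generated line bundle obtained from $\calO(a)$ by lowering its $i$-th exponent by $2$ (all exponents are $\geq 0$ precisely because $d_i \geq 3$) produces sections in $H^0(\calI_Z(a))$ whose associated ideal sheaf is already $\sum_i (Q_i) = \calI_Z$; since conversely every section of $H^0(\calI_Z(a))$ vanishes on $Z$, the base scheme of $\vert \calI_Z(a)\vert$ is exactly $Z$. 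Therefore $A \subseteq W = Z$, and as $\deg(A) = \deg(Z) = 2^k$ we obtain $A = Z$. The main obstacle is this last step: proving $W = Z$ \emph{scheme-theoretically} rather than merely set-theoretically, which is exactly where the hypothesis $d_i \geq 3$ enters, guaranteeing that the twisting bundles are globally generated so that the $Q_i$ already cut out $Z$ within multidegree $a$.
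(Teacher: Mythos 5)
Your proof is correct, but it concludes by a genuinely different route than the paper. Both arguments hinge on Lemma \ref{lemma: I222 of Z is maximal among schemes spanning T}, but the paper works in the minimal multidegree $(1 \vvirg 1)$ and argues by contradiction: assuming $A \neq Z$ (so w.l.o.g.\ $A \not\subseteq Q_k$), it invokes Lemma \ref{lemma: 5.1 plus} via the residual exact sequence with respect to $Q_k$ to force $h^1(\calI_A(1 \vvirg 1)) > 0$, hence $\vert \calI_A(1\vvirg 1)\vert \neq \emptyset$, contradicting $\vert \calI_A(1\vvirg 1)\vert \subseteq \vert \calI_Z(1\vvirg 1)\vert = \emptyset$; the case $k=1$ is handled separately by Sylvester's Theorem. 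You instead work at the top admissible multidegree $(d_1-1 \vvirg d_k-1)$, upgrade the inclusion of linear systems to an equality by the K\"unneth/dimension count (which correctly forces $h^1(\calI_A(a)) = 0$ --- note this equality is genuinely needed, since the inclusion alone gives the base-locus containment in the wrong direction), and then identify the scheme-theoretic base locus of $\vert \calI_Z(a)\vert$ as exactly $Z$: the sections $s_{Q_i}\cdot H^0(\calO(a-2e_i))$ generate $\sum_i \calI_{Q_i} = \calI_Z$ because the twisting bundles are globally generated (this is where $d_i \geq 3$ enters, exactly as you say), so $A \subseteq Z$ and the degree count finishes. What your approach buys: it is direct rather than by contradiction, it avoids the residual-sequence machinery of Lemma \ref{lemma: 5.1 plus} entirely, it treats $k=1$ uniformly without appealing to Sylvester, and it yields the extra structural fact that $Z$ is cut out scheme-theoretically by the system $\vert \calI_Z(d_1-1\vvirg d_k-1)\vert$. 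What the paper's approach buys: given that Lemma \ref{lemma: 5.1 plus} is already developed and reused elsewhere, its argument is shorter, and the emptiness of $\vert \calI_Z(1\vvirg 1)\vert$ gives a particularly crisp contradiction without any base-locus analysis.
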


\begin{proof}
Let $Z := Z_k$ and let $A$ be a zero-dimensional scheme in $(\bbP^1)^k$ with $\deg(A) = 2^k$ and $T \in \langle \nu_{d_1 \vvirg d_k}(A) \rangle$. Our goal is to prove that $A =Z$. Let $Q_i \in \vert \calO_{(\bbP^1)^k}(0 \vvirg 0,2,0 \vvirg 0) \vert$ be as in Section \ref{section: Preliminaries} (Notation).

The theorem is true for $k=1$ by Sylvester's Theorem (see \cite{cs,bgi,ik}).  Assume $k\ge 2$ and suppose $A \neq Z$. Since $Z = \bigcap_{i} Q_i$, we may assume $A \not \subseteq Q_k$. 

Since $Z \subseteq Q_k \in \vert \calI_Z(0 \vvirg 0,2)$, by Lemma \ref{lemma: 5.1 plus}, we have $h^1 (\calI_{\Res_{Q_k}(A)}(d_1 \vvirg d_{k-1},d_k-2)) > 0$ and therefore $h^1 (\calI_{\Res_{Q_k}(A)}(1 \vvirg 1)) > 0$. This provides $h^1 (\calI_{A}(1 \vvirg 1)) > 0$ and since $\deg(A) = 2^k = h^0(\calO_{(\bbP^1)^k}(1 \vvirg 1)) = 2^k$ we obtain $|\Ii _A(1,\dots ,1)| \ne \emptyset$. 

On the other hand, by Lemma \ref{lemma: I222 of Z is maximal among schemes spanning T}, we have $\vert \Ii _A(1,\dots ,1) \vert \subseteq  \vert \Ii _Z(1,\dots ,1) \vert = \emptyset$. This provides a contradiction and concludes the proof.
\end{proof}

In the case $k = 2$ and $d_i = 3$, we have the following

\begin{proposition}\label{corr1}
Let $T= W_3\otimes W_3 \in S^3 \bbC^2 \otimes S^3 \bbC^2 $. Let $A\subset \PP^1\times \PP^1$ be a zero-dimensional scheme such that $T \in \langle \nu _{3,3}(A)\rangle$ and $\deg (A)\le 5$. Then $Z \subseteq A$.
\end{proposition}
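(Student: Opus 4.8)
The plan is to reduce everything to Theorem \ref{thm: Z unique} by showing that no scheme of degree $5$ other than those containing $Z$ can span $T$. Since $c_{3,3}(T) = 4$ by Lemma \ref{lemma: cactus of Wd1 tatata Wdk}, every $A$ with $T \in \langle \nu_{3,3}(A)\rangle$ has $\deg(A) \geq 4$, and if $\deg(A) = 4$ then $A$ evinces the cactus rank, so $A = Z$ by Theorem \ref{thm: Z unique}. Thus I would argue by contradiction, assuming $Z \not\subseteq A$ and $\deg(A) = 5$. Passing to a minimal subscheme of $A$ still spanning $T$ (it still avoids $Z$, and cannot have degree $4$, else it would equal $Z$), I may assume that $A$ itself minimally spans $T$ and $\deg(A) = 5$.

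The first step is to trap $A$ inside reducible divisors. By K\"unneth, $|\calI_Z(2,1)| = Q_1 \cdot |\calO_{\bbP^1\times\bbP^1}(0,1)|$ and $|\calI_Z(1,2)| = Q_2 \cdot |\calO_{\bbP^1\times\bbP^1}(1,0)|$, while $|\calI_Z(1,1)| = \emptyset$. Since $\deg(A) = 5 < 6 = h^0(\calO(2,1))$, the systems $|\calI_A(2,1)|$ and $|\calI_A(1,2)|$ are nonempty, and by Lemma \ref{lemma: I222 of Z is maximal among schemes spanning T} they are contained in $|\calI_Z(2,1)|$ and $|\calI_Z(1,2)|$; hence $A \subseteq Q_1 \cup M$ for some $M \in |\calO(0,1)|$ and $A \subseteq Q_2 \cup M'$ for some $M' \in |\calO(1,0)|$. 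The same lemma in bidegree $(1,1)$ gives $|\calI_A(1,1)| \subseteq |\calI_Z(1,1)| = \emptyset$, so $A$ lies on no curve of bidegree $(1,1)$.

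Intersecting, $A \subseteq (Q_1 \cup M) \cap (Q_2 \cup M')$, whose components are $Z = Q_1 \cap Q_2$, the point $M \cap M'$, and the lines $L_2$ or $L_1$ precisely when $M = L_2$ or $M' = L_1$. If neither $M$ nor $M'$ passes through $o$, this intersection is the degree-$5$ scheme $Z \cup \{M\cap M'\}$, so $A = Z \cup \{M\cap M'\} \supseteq Z$, contradicting $Z \not\subseteq A$. There remain the degenerate configurations, which up to exchanging the two factors are $A \subseteq Z \cup L_2$ and $A \subseteq Z \cup L_1 \cup L_2$; the latter reduces to the former in the residuation below, since $\Res_{Q_1}(L_1) = \emptyset$.

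To treat $A \subseteq Z \cup L_2$ I would split on $m := \deg(A \cap Q_1) = \deg(A \cap Z)$. If $m \geq 3$, then $\Res_{Q_1}(A) \subseteq L_2$ has degree at most $2$, so $h^1(\calI_{\Res_{Q_1}(A)}(1,3)) = 0$ (restricting to $L_2$, where $\calO(1,3)$ has degree $1$); as $\Res_{Q_1}(Z) = \emptyset$ makes the disjointness hypothesis automatic and $h^1(\calI_Z(3,3)) = 0$, Lemma \ref{lemma: 5.1 plus} with $B = \nu_{3,3}(Z)$ and $D = \nu_{3,3}(Q_1)$ yields $A \subseteq Q_1$, hence $A \subseteq Q_1 \cap (Z \cup L_2) = Z$, against $\deg(A) = 5$. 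The case $m \leq 2$ is where I expect the main obstacle to lie: here $A$ must meet $L_2$ in a long scheme (which, since $A \not\subseteq L_2$ because $T \notin \langle \nu_{3,3}(L_2)\rangle$, is largely reduced) and carry only a short tail off $L_2$ at $o$. I expect the emptiness $|\calI_A(1,1)| = \emptyset$ to be exactly what excludes this: a length count at $o$, using that the part of $A$ lying in $Z$ has colength at most $2$ in the Gorenstein local ring $\calO_{Z,o} \cong \bbC[u_1,u_2]/(u_1^2,u_2^2)$ (whose nonzero ideals all contain the socle $u_1u_2$), should force the off-$L_2$ tail into the double point $2o$, hence $A$ onto the bidegree-$(1,1)$ curve $L_1 \cup L_2$ (or a tilted variant), contradicting $|\calI_A(1,1)| = \emptyset$. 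Making this local analysis precise, and verifying it against every length-$5$ subscheme of $Z \cup L_2$, is the delicate point of the argument.
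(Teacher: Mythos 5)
Your reduction, the trapping of $A$ inside $(Q_1\cup M)\cap(Q_2\cup M')$, the non-degenerate case, and the subcase $\deg(A\cap Z)\geq 3$ (a correct application of Lemma~\ref{lemma: 5.1 plus} with $D=Q_1$) are all fine. The gap is the subcase $m=\deg(A\cap Z)\leq 2$, which you leave as a sketch, and that sketch cannot be completed: the contradiction you aim for (forcing $A$ onto a $(1,1)$-curve) is provably unreachable from the constraints you have assembled. Work in local coordinates $\eta_1,\eta_2$ at $o$, so that $I_{L_2}=(\eta_2)$, $I_Z=(\eta_1^2,\eta_2^2)$ and $I_{Z\cup L_2}=(\eta_1^2\eta_2,\eta_2^2)$. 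Fix $c\neq 0$, let $A_o$ be the length-$4$ curvilinear scheme with $I_{A_o}=(\eta_2-c\eta_1^2,\ \eta_1^4)$ (four points infinitely near $o$ along a parabola tangent to $L_2$), and set $A=A_o\cup\{p\}$ with $p\in L_2\setminus\{o\}$. Then $\deg(A)=5$, $Z\not\subseteq A$, $A\subseteq Z\cup L_2$ (both $\eta_2^2$ and $\eta_1^2\eta_2$ lie in $I_{A_o}$), $\deg(A\cap Z)=2$, and in fact every constraint you derived holds: $\vert \calI_A(2,1)\vert=\{Q_1+L_2\}\subseteq\vert\calI_Z(2,1)\vert$, $\vert\calI_A(1,2)\vert=\vert\calI_Z(1,2)\vert$, and $\vert\calI_A(1,1)\vert=\emptyset$. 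Yet $A$ lies on no $(1,1)$-curve: such a curve through $o$ has local equation $\alpha\eta_1+\beta\eta_2+\gamma\eta_1\eta_2$, which modulo $I_{A_o}$ becomes $\alpha\eta_1+\beta c\eta_1^2+\gamma c\eta_1^3$, nonzero modulo $\eta_1^4$ unless $\alpha=\beta=\gamma=0$. The precise failure of your socle argument is that colength $\leq 2$ of the image of $I_A$ in $\calO_{Z,o}$ only yields $\eta_1\eta_2\in I_A+I_Z$, not $\eta_1\eta_2\in I_A$: on the parabola $\eta_1\eta_2\equiv c\eta_1^3$, which is killed by $I_Z$ but not by $I_{A_o}$. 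So no purely local classification of length-$5$ subschemes of $Z\cup L_2$ can close this case.

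What is missing is a return to the hypotheses that $A$ and $Z$ both minimally span $T$, and this is exactly how the paper avoids your dead end. Instead of using only the nonemptiness of $\vert\calI_A(2,1)\vert$ (a degree count), the paper first proves $h^1(\calI_A(2,1))>0$, via the contrapositive of Lemma~\ref{lemma: 5.1 plus} applied with $D=Q_2$ (this is where $A\not\subseteq Q_2$ and the minimality of both schemes enter). Combined with Lemma~\ref{lemma: I222 of Z is maximal among schemes spanning T}, this forces $\vert\calI_A(2,1)\vert$ to be the \emph{whole} pencil $\vert\calI_Z(2,1)\vert=Q_1\cdot\vert\calO_{\bbP^1\times\bbP^1}(0,1)\vert$, whose base locus is $Q_1$; hence $A\subseteq Q_1$, which kills all the residual configurations at once (your parabola scheme has $\eta_1^2\notin I_{A_o}$, so it does not lie in $Q_1$). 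Note that the $A$ above has $h^1(\calI_A(2,1))=0$, confirming that this cohomological input, not a finer local analysis, is the missing ingredient. Even after $A\subseteq Q_1$ the paper must invoke the spanning hypothesis twice more: the bookkeeping forcing $\deg(A\cap L_1)=4$, which uses $h^1(\calI_{A\cup Z}(3,3))>0$ from Lemma~\ref{lemma: union minimals have h^1}, and the final span-exchange along the rational normal cubic $\nu_{3,3}(L_1)$. So what you left open is the heart of the proof rather than a last verification.
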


\begin{proof}
 \setcounter{claimCount}{0}

We may assume $\deg(A) = 5$, because $c_{3,3}(T) = 4$ and if $\deg(A) = 4$, then $A = Z$ by Theorem \ref{thm: Z unique}. Suppose $A \nsupseteq Z$, which implies that $A$ is minimal, namely that there is no $A' \subsetneq A$ such that $T \in  \langle \nu_{3,3}(A')\rangle$, because if this was the case, then $\deg(A') = 4$ and $A' = Z$.

Since both $A$ and $Z$ are minimal, Lemma \ref{lemma: union minimals have h^1} implies $h^1(\Ii _{A \cup Z}(3,3)) >0$. 

Since $Z \neq A$ and $Z = Q_1 \cap Q_2$, we may assume $A \not\subseteq Q_2$. 

\begin{claim}\label{blabla1}
We have $A \subseteq Q_1$.
\end{claim}
\begin{quote}
\begin{proof}[Proof of Claim \ref{blabla1}]
Since $A \not\subseteq Q_2$, Lemma \ref{lemma: 5.1 plus} provides $h^1( \calI_{\Res_{Q_2}(Z \cup A)} (3,1) ) > 0$, and since $\Res_{Q_2}(Z \cup A) \subseteq A$, we have $h^1( \calI_{A} (3,1) ) > 0$ and therefore $h^1(\calI_A(2,1)) > 0$. Since $\deg(A) = 5$ and $h^1(\calI_A(2,1)) > 0$, the exact sequence 
\[
0 \to \calI_A(2,1) \to \calO_{\bbP^1 \times \bbP^1}(2,1) \to \calO_{\bbP^1 \times \bbP^1}(2,1) \vert_A \to 0 
\]
provides $h^0(\calI_A(2,1)) = 2$. By Lemma \ref{lemma: I222 of Z is maximal among schemes spanning T}, we have $ \vert \calI_A ( 2,1)\vert \subseteq \vert \calI_Z ( 2,1) \vert$. Since $h^1 (\calI_Z ( 2,1)) = 0$ and $\deg(Z) = 4$, we have $h^0(\calI_Z ( 2,1)) = 2$ so that $\vert \calI_A ( 2,1)\vert = \vert \calI_Z ( 2,1) \vert$; finally, since $Z \subseteq Q_1$, we have $\vert \calI_A ( 2,1) \vert = \vert \calI_Z ( 2,1) \vert = Q_1 \cdot \vert \calO_{\bbP^1 \times \bbP^1}(0,1) \vert$. By definition, $A$ is contained in the base locus of $\vert \calI_A ( 2,1) \vert$, and since $\vert \calO_{\bbP^1 \times \bbP^1}(0,1) \vert$ has no base locus, we deduce that $A \subseteq Q_1$.
\end{proof}
\end{quote}

We have $5 = \deg (A) = \deg(A \cap L_1) + \deg( \Res_{L_1}(A))$. Set $e = \deg(A \cap L_1)$: since $Q_1 = L_1^2$, we have $ \Res_{L_1}(A) \subseteq A \cap L_1$, so that we obtain $2e \geq 5$ and clearly $e < 5 = \deg(A)$. This shows $3 \leq e \leq 4$.

 \begin{claim}\label{blabla2}
We have $h^1(\calI_{\Res_{L_1}(A \cup Z)} (2,3)) = 0$.
\end{claim}
\begin{quote}
\begin{proof}[Proof of Claim \ref{blabla2}]
From the exact sequence
\[
0 \to \calI_{\Res_{L_1}(A \cup Z)} (2,3) \to \calO_{\bbP^1 \times \bbP^1}(2,3) \to \calO_{\bbP^1 \times \bbP^1} (2,3) \vert_{\Res_{L_1}(A \cup Z)} \to 0,
\]
we have that $h^1(\calI_{\Res_{L_1}(A \cup Z)} (2,3)) = 0$ is equivalent to the surjectivity of the restriction map $H^0 (\calO_{\bbP^1 \times \bbP^1} (2,3)) \to H^0 (\calO_{\bbP^1 \times \bbP^1} (2,3) \vert_{\Res_{L_1}(A \cup Z)})$. This restriction map is the composition 
\[
H^0 (\calO_{\bbP^1 \times \bbP^1} (2,3)) \to H^0 (\calO_{\bbP^1 \times \bbP^1} (2,3) \vert_{L_1}) \to H^0 (\calO_{\bbP^1 \times \bbP^1} (2,3) \vert_{\Res_{L_1}(A \cup Z)}): 
\]
the first one is surjective because $h^1(\calO_{\bbP^1 \times \bbP^1}(1,3)) = 0$; the second one is surjective because $\deg(\Res_{L_1}(A \cup Z)) \leq 4$ and $\calO_{\bbP^1 \times \bbP^1} (2,3) \vert_{L_1} \simeq \calO_{\bbP^1}(3)$. 
\end{proof}
\end{quote}
Since $h^1(\calI_{\Res_{L_1}(A \cup Z)}(2,3)) = 0$, we deduce that $\Res_{L_1}(A) \cap \Res_{L_1}(Z) \neq \emptyset$: indeed if $\Res_{L_1}(A) \cap \Res_{L_1}(Z) = \emptyset$, Lemma \ref{lemma: 5.1 plus} would imply $A \cup Z \subseteq L_1$, which is false. In particular, $o \in \Res_{L_1}(A)$ so $o$ appears with multiplicity at least $2$ in $A$, and $o \in A \cap L_1$.

Since $h^1(\Ii _{A \cup Z}(3,3)) >0$, the residual exact sequence of $A \cup Z$ with respect to $L_1$ gives $h^1( \calI_{(A \cup Z) \cap L_1 , L_1} (3,3)) > 0$. From this, we deduce $\deg ( (A \cup Z) \cap L_1) \geq 5$ because $L_1 \simeq \bbP^1$ and $h^0( \calO_{\bbP^1 \times \bbP^1} (2,3) \vert _{L_1}) = h^0(\calO_{\bbP^1} (3)) = 4$. Since $ 5 \leq \deg ( (A \cup Z) \cap L_1) \leq \deg (A \cap L_1) + \deg(Z \cap L_1) - \deg(A \cap Z \cap L_1) \leq e + 2 - 1$, we obtain $e = 4$ and $\deg(A \cap Z \cap L_1) = 1$ so $o$ appears with multiplicity $1$ in $A \cap L_1$. 

 Write $A = A_1 \sqcup A_2$, where $A_1$ is a subscheme of degree $3$ on $L_1 \smallsetminus \{o \}$ and $A_2$ is a scheme of degree $2$ supported at $\{o\}$. Let $A_3$ be the zero-dimensional scheme of degree $4$ in $L_1$ supported at $o$. Then $\langle \nu_{3,3} (A_1) \rangle = \langle \nu_{3,3} (A_3) \rangle$ because they are two subschemes of degree $4$ in $L_1$, and $\nu_{3,3}(L_1)$ is a rational normal curve of degree $3$. Therefore
\[
 \langle \nu_{3,3} (A) \rangle = \langle \nu_{3,3} (A_1)  \rangle +  \langle \nu_{3,3} (A_2)  \rangle  = \langle \nu_{3,3} (A_3)  \rangle +  \langle \nu_{3,3} (A_2)  \rangle  = \langle \nu_{3,3} (A_2 \cup A_3)  \rangle.
\]
Let $\tilde{A} = A_2 \cup A_3$. Then $\deg(\tilde{A}) = \deg(A_2) + \deg(A_3) - \deg(A_2 \cap A_3)$. We have $A_2 \cap A_3 = \{o\}$ because $A_2 \not\subseteq L_1$, so $\deg(\tilde{A}) = 5$. Moreover $Z \not\subseteq \tilde{A}$ because $\Res_{L_1}(A) = \{o\}$ whereas $\Res_{L_1}(A) = Z_1 \otimes \{o_1\}$. Therefore $\tilde{A}$ satisfies the same hypothesis as $A$, and in particular we have $h^1( \calI_{(\tilde{A} \cup Z) \cap L_1 , L_1} (3,3)) > 0$. But $(\tilde{A} \cup Z) \cap L_1 = A_3$ and we have $h^1( \calI_{A_3 , L_1} (3,3)) = 0$ because $A_3$ is a scheme of degree $3$ on $\bbP^1$.
\end{proof}

The result of Proposition \ref{corr1} is sharp because of the following remark:

\begin{remark}\label{corr2}
Fix an integer $k\geq 2$ and integers $d_i\ge 3$, $1\le i\le k$. For any $i\in \{1,\dots ,k\}$ there is a zero-dimensional scheme $A\subset (\PP^1)^k$ with $Z_k \not\subseteq A$ such that $\deg (A)=d_i 2^{k-1}$ and $T = W_{d_1} \ootimes W_{d_k} \in \langle \nu _{d_1,\dots ,d_k}(A)\rangle$. Up to a permutation of the factors, we show that this is true for $i = 1$. Since $R_{d_1} (W_{d_1}) = d_1$, there exists a set $S$ contained in the rational normal curve of degree $d_1$
made of $d_1$ distinct points such that $W_{d_1} \in \langle \nu_{d_1}(S) \rangle$. Take $A = S \times Z_1 \ttimes Z_1$. Then $\deg(A) = d_1 2^{k-1}$, $Z_1 \ttimes Z_1 \not \subseteq A$ and $ T \in \langle \nu_{d_1 \vvirg d_k}(A) \rangle$.

In particular, for $k=2$, $d_1 = d_2 = 3$, we obtain $\deg(A) = 6$, showing that Proposition \ref{corr1} is sharp.
\end{remark}

Let $X\subset \PP^N$ be an irreducible and nondegenerate variety. For every $p \in \PP^N$ let $\Ss (p,X)$ denote the set of all subsets of $X$ evincing the $X$-rank of $p$; more precisely, if $r = R_X(p)$
\[
 \calS(p,X) := \{  (x_1 \vvirg x_r) \in X^{(r)} : p \in \langle x_1 \vvirg x_r \rangle\}
\]
where $X^{(r)} = X^{\times r} / \frakS_r$ denotes the $r$-th symmetric power of the variety $X$. The set $\calS(p,X)$ is constructible and we can study its irreducible components and their dimension. Similarly, let $\calZ(p,X)$ be the set of all zero-dimensional schemes evincing the $X$-cactus rank of $p$.

\begin{proposition}\label{inf2} Fix integers $k$ and $d_1 \vvirg d_k \geq 2$. Let $\scrV_{d_1, \ldots ,d_k}$ be the Segre-Veronese variety.  Let $\Gamma$ be any irreducible component of $\Ss (T, \scrV_{d_1 \vvirg d_k})$. Then $\dim ( \Gamma ) \geq k$ and for every $p=(p_1,\dots ,p_k)\in \bbP^1 \ttimes \bbP^1$ with $p_i \ne [x_i],[y_i]$ there exists $A\in \Gamma$ with $p\in A$.
\end{proposition}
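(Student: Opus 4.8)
The plan is to exploit the $k$-dimensional torus $H\cong(\bbC^*)^k$ that stabilizes $[T]$ projectively and hence acts on $\calS(T,\scrV_{d_1\vvirg d_k})$. Concretely, let $H_i\subseteq\mathrm{PGL}_2$ be the image of the diagonal torus acting on the $i$-th factor $\bbP^1$ by $[a:b]\mapsto[\mu a:b]$; since any diagonal change of coordinates sends $W_{d_i}=x_i^{d_i-1}y_i$ to a scalar multiple of itself, each $H_i$ fixes $[W_{d_i}]$, and therefore $H=H_1\ttimes H_k$ fixes $[T]$. Because $\nu_{d_1\vvirg d_k}$ is $H$-equivariant, $H$ permutes $\calS(T,\scrV_{d_1\vvirg d_k})$; as $H$ is connected it preserves every irreducible component $\Gamma$ setwise. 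The two fixed points of $H_i$ on $\bbP^1$ are $[x_i]$ and $[y_i]$, so the open dense $H$-orbit in $(\bbP^1)^k$ is $U:=\prod_i(\bbP^1\setminus\{[x_i],[y_i]\})$, on which $H$ acts simply transitively; note that $U$ is exactly the locus of points $p$ with $p_i\ne[x_i],[y_i]$.

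The crux of the argument, and the only non-formal step, is the claim that every $A\in\calS(T,\scrV_{d_1\vvirg d_k})$ contains a point of $U$; equivalently $T\notin\langle\nu_{d_1\vvirg d_k}((\bbP^1)^k\setminus U)\rangle$, where $(\bbP^1)^k\setminus U=\bigcup_i(\{a_i=[x_i]\}\cup\{a_i=[y_i]\})$. Since the Veronese of a single factor spans its whole symmetric power, I would write this span as $\sum_{i=1}^k M_i$, where $M_i=S^{d_1}\bbC^2\ootimes\langle x_i^{d_i},y_i^{d_i}\rangle\ootimes S^{d_k}\bbC^2$ is obtained by restricting the $i$-th factor to $\langle x_i^{d_i},y_i^{d_i}\rangle$. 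Then I would separate $T$ from this sum by a single functional: take $\phi_i\in(S^{d_i}\bbC^2)^*$ dual to the monomial $x_i^{d_i-1}y_i$, so that $\phi_i(x_i^{d_i})=\phi_i(y_i^{d_i})=0$ (this is where $d_i\ge2$ enters, ensuring $x_i^{d_i-1}y_i$ is a basis monomial distinct from $x_i^{d_i}$ and $y_i^{d_i}$), while $\phi_i(x_i^{d_i-1}y_i)=1$. Setting $\phi=\phi_1\otimes\cdots\otimes\phi_k$, the tensor structure gives $\phi(M_i)=0$ for every $i$, whereas $\phi(T)=\prod_i\phi_i(x_i^{d_i-1}y_i)=1\ne0$. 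Hence $T\notin\sum_i M_i$, so no $A$ contained in $(\bbP^1)^k\setminus U$ can satisfy $T\in\langle\nu_{d_1\vvirg d_k}(A)\rangle$, which proves the claim.

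The two conclusions of the proposition then follow formally. For reachability, given $p\in U$ pick any $A\in\Gamma$ and a point $a\in A\cap U$ (nonempty by the claim); by transitivity of $H$ on $U$ there is $h\in H$ with $ha=p$, and then $hA\in\Gamma$ (as $H$ preserves $\Gamma$) contains $p$. For the dimension bound, I would study the orbit map $H\to\Gamma$, $h\mapsto hA$, whose image $H\cdot A$ is an irreducible subset of $\Gamma$. Its fibres form the set-stabilizer $\mathrm{Stab}_H(A)$, which is finite: if $hA=A$ then $ha$ lies in the finite set $A\cap U$, and by simple transitivity each such target is hit by a unique $h$, so $|\mathrm{Stab}_H(A)|\le\deg(A)$. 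Therefore $\dim(H\cdot A)=\dim H=k$, and $H\cdot A\subseteq\Gamma$ yields $\dim\Gamma\ge k$.

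I expect the main obstacle to be precisely the claim $T\notin\langle\nu_{d_1\vvirg d_k}((\bbP^1)^k\setminus U)\rangle$: every other step is a formal manipulation of the $H$-action, but this is where the product structure of $T$ as a tensor product of $W$-states is genuinely used, and the clean route is the explicit separating functional above rather than any dimension estimate (which would be circular, since $\dim\Gamma$ is what we are bounding). One should also verify the two bookkeeping facts that legitimize the group action, namely that $\nu_{d_1\vvirg d_k}$ is $H$-equivariant so that $H$ really acts on $\calS(T,\scrV_{d_1\vvirg d_k})$, and that connectedness of $H$ forces each component $\Gamma$ to be $H$-invariant.
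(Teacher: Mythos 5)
Your proof is correct, and its group-theoretic frame coincides with the paper's: the same $k$-dimensional torus fixing $[T]$, invariance of each irreducible component under the connected group, transitivity on $U$ for the reachability statement, and finiteness of the set-stabilizer for the dimension bound. Where you genuinely diverge is in the crux, namely the claim that every $A\in\Ss(T,\scrV_{d_1\vvirg d_k})$ meets $U$. The paper proves this with its cohomological machinery: if $S$ were contained in the divisor $L=L_1\cdots L_k$, then $\Res_L(S)=\emptyset$, hence $h^1(\calI_{\Res_L(Z\cup S)}(d_1-1\vvirg d_k-1))=0$, and Lemma \ref{lemma: 5.1 plus} would force $Z\subseteq L$, a contradiction. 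Your route is purely linear-algebraic: you identify $\langle\nu_{d_1\vvirg d_k}((\bbP^1)^k\setminus U)\rangle$ with $\sum_i M_i$ and exhibit the separating functional $\phi_1\otimes\cdots\otimes\phi_k$, which vanishes on each $M_i$ but not on $T$. This buys two things. First, it avoids sheaf cohomology and the residual-scheme lemma entirely, applies verbatim to zero-dimensional schemes rather than just reduced sets, and works over any field. Second, it treats the full complement of the torus orbit: the paper identifies $U$ with $(\bbP^1)^k\setminus L$, but $L$ is only the union of the divisors $\{p_i=[x_i]\}$, so the paper's contradiction argument as written excludes only $S\subseteq L$ and is silent about sets that avoid $U$ by lying on the divisors $\{p_i=[y_i]\}$; your functional kills both families of divisors simultaneously, so your version of the claim is the complete one. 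What the paper's approach buys in exchange is uniformity with the rest of the article, where Lemma \ref{lemma: 5.1 plus} is the standing engine; your argument instead requires the explicit product structure of $T$ as a tensor product of $W$-states, which is exactly the situation at hand.
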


\begin{proof}
The group $SL_2 ^{(1)} \ttimes SL_2^{(k)}$ naturally acts $S^{d_1}\bbC^2 \ootimes S^{d_k} \bbC^2$ and on its projectivization, where $SL_2^{(i)}$ is identified with the group of $2 \times 2$ matrices with determinant $1$ acting on $\bbC^2$ with the given basis $x_i,y_i$. Partially symmetric rank is invariant under the action of the group. For every $i$, let $\Delta_i \subseteq SL_2^{(i)}$ be the subgroup of diagonal matrices, that stabilizes the points $[x_i],[y_i]$ on $\bbP^1$ and therefore the element $[W_{d_i}] \in \bbP S^{d_i} \bbC^2$; for every $i$, we have $\dim ( \Delta_i )= 1$. Therefore $G:= \Delta_1 \ttimes \Delta_k$ stabilizes $T$, and acts on $\calS(T,V_{d_1 \vvirg d_k})$; moreover, since $G$ is connected, it acts on every irreducible component of $\calS(T,\nu_{d_1 \vvirg d_k})$ and in particular on $\Gamma$. The group $G$ has a unique open orbit on $\bbP^1 \ttimes \bbP^1$, that is $U = \{ (p_1\vvirg p_k) : p_i \neq [x_i],[y_i] \} = ( \bbP^1 \ttimes \bbP^1 ) \setminus L$ where $L$ is the divisor defined by $L = L_1 \cdots L_k \in \calO_{\bbP^1 \ttimes \bbP^1} (1 \vvirg 1)$.

Fix $S \in \Gamma$. Notice that $S \cap U \neq \emptyset$: indeed if $S \subseteq L$, then $\Res_L( S)  = \emptyset$ and since $\Res_L(Z) = \{ o \}$, we have $h^1( \calI_{\Res_{L}(Z \cup S)} (d_1 - 1 \vvirg d_k-1)) = 0$. By Lemma \ref{lemma: 5.1 plus} we obtain $Z \subseteq L$ which is false. Therefore $S \cap U \neq \emptyset$ and this implies that no element of $G$ stabilizes $S$. The orbit of $S$ under the action of $G$ is contained in $\Gamma$, and since $\dim G = k$, we conclude.
\end{proof}

\section{Conclusions}

We conclude with a brief discussion on some of the many problems that remain open and on the limits of our techniques.

Proposition \ref{prop: upper bound Wd1 ot Wd2} suggests that the bound of Theorem \ref{bo1} is not tight, but obtaining better bounds using similar techniques seems a hard task. On the other hand, the upper bound of Theorem \ref{thm: rank upper bound W3 tatata W3} is tight for $k=1,2$ and there is numerical evidence that it is tight for $k=3$. We pose the problem of determining expressions of the form \eqref{eqn: R333 expression for W3 tatata W3} in the case of higher $d_i$'s, which would lead to better upper bounds in the general case. 

As far as lower bounds are concerned, it is clear that the result of Proposition \ref{lower1} is far from sharp (even asymptotically). The techniques of Proposition \ref{fingercrossed} may provide results for higher degrees and higher number of factors, but one would need a general framework to avoid the case by case discussion.

In the more general setting of Segre-Veronese varieties, we pose following technical questions, which can be addressed using techniques similar to the ones used in this work.

\begin{question}\label{question: final questions}
Let $T\in \mathbb{P}(S^{d_1} \bbC^{n_1+1} \ootimes S^{d_k} \bbC^{n_k+1})=\mathbb{P}^N$ with
$N = -1 + \prod _{i=1}^{k} \binom{n_i+d_i}{n_i}$ and indicate with $\scrV_{d_1, \ldots , d_k}^{n_1, \ldots , n_k}$ the Segre-Veronese embedding of $\mathbb{P}^{n_1}\ttimes  \mathbb{P}^{n_k}$ of multidegree $(d_1 \vvirg d_k)$ in $\bbP^N$. Let $T \in \bbP^N$.

\begin{enumerate}
\item What is the minimum integer $m_0$ such that there exists a zero-dimensional scheme (resp. a finite set) $A$ in $\scrV_{d_1, \ldots , d_k}^{n_1, \ldots , n_k}$, with $\deg (A)=m_0$, such that $T\in \langle \nu _{d_1 \vvirg d_k}(A)\rangle$ and $A \nsupseteq B$ for any $B\in \Zz (T, \scrV_{d_1, \ldots , d_k}^{n_1, \ldots , n_k})$ (resp. for any $B\in \Ss(T,\scrV_{d_1, \ldots , d_k}^{n_1, \ldots , n_k})$)?
\item What is the minimum integer $m_0$ such that for every $m$ with $ m_0 \leq m \leq N+1$ there exists a zero-dimensional scheme (resp. a finite set) $A_{m}$ such that $\deg(A_m) = m$, $T\in \langle \nu_{d_1\vvirg  d_k}(A_m)\rangle$ and there is no $A' \subseteq A_m$ such that $T\notin \langle \nu _{d_1 \vvirg d_k}(A')\rangle$? 
\item What are necessary conditions (or sufficient conditions) for $T$ to satisfy $|\Zz _X(T)|=1$ or $|\Ss _X(T)|=1$?
\end{enumerate}
\end{question}

\section{Appendix} \label{appendix}
\subsection{Considerations on Sylvester's theorem}

We briefly present Sylvester's Theorem for binary forms (\cite{cs}, \cite{bgi}, \cite[\S 1.3]{ik}). It is a classical result that completely describes the behavior of rank and cactus rank in the setting of complex homogeneous polynomials of degree $d$ in two variables, namely elements of $S^d \bbC^2$.

\begin{remark}\label{remark: sylvester thm}
Fix $d \geq 1$. All integers $r = 1 \vvirg d$ occur as $\scrV_d$-rank of some element in $S^d \bbC^2$ and all integers $r = 1 \vvirg \lceil (d+1)/2\rceil$ occur as $\scrV_d$-cactus rank. For elements of rank $r \leq \lceil(d+1)/2 \rceil$, $\scrV_d$-cactus rank, $\scrV_d$-border rank and $\scrV_d$-border cactus rank coincide (see \cite{bgi}) and the generic $\scrV_d$-rank is $\lceil (d+1)/2\rceil$. For every $f \in S^d \bbC^2$, there is a  unique scheme $A_f \subseteq \bbP^1$ that evinces the cactus rank of $f$, unless $d$ is even and $c_d(f) = \frac{d}{2} +1$; in this case $R_d(f) = \frac{d}{2} +1$ as well, and rank and cactus rank are evinced by infinitely many sets of distinct points (an instance of this phenomenon is in the case $f = x^{d/2}y^{d/2}$). If $A_f$ is reduced, then $A_f$ evinces the rank as well, and $R_d ( f) = c_d(f)$. If $A_f$ is not reduced, then $R_d(f) = d+2 - c_d(f)$. 

Explicitly, for every $r = 1 \vvirg \lceil (d+1)/2\rceil$, the polynomial $ f_r = \sum_1^r (x+ j y)^d$  satisfies $c_d(f_r) = R_d(f) = r$ and $A_{f_r} = \{ [x+ j y] : j = 1 \vvirg r\} \subseteq \bbP^1$. For every $r = \lceil (d+1)/2\rceil +1 \vvirg d$, the polynomial $g_r = x^{d-r+1} y^{r-1}$ satisfies $c_d(g_r) = r$, $R_d(g_r) = d+2 - r$ and $A_{g_r}$ is the zero-dimensional scheme supported at $[x]$ of degree $r$. In particular $R_d(W_d) = d$, $c_d(W_d) = 2$ and $A_{W_d} = Z_1$.

For every $r\in \{1,\dots ,d\}$, the set
$S[r]:=\{f \in \PP^d :  R_{d}(f)=r\}
$
is a constructible set, in the sense of \cite[\S2.C]{Mum:ComplProjVars}. Moreover, it is irreducible of dimension $2d+2-2r$ (see also \cite{bhmt}). 
\end{remark}

We can regard Remark \ref{remark: sylvester thm} as a particular case of the following remark, that applies to every projective variety.

\begin{remark}\label{remark: basics on rhoX}

 Let $X \subseteq \bbP^N$ be a projective nondegenerate variety. Define 
 \begin{align*}
  \rho_X^\circ &= \max\{ r : \text{ for any set $S \subseteq X$  of $r$ distinct points, $\dim \langle S \rangle = r-1$  } \}, \\
  \rho_X  &= \max\{ r :  \text{for any zero-dimensional scheme $A \subseteq X$ of degree $r$, $\dim \langle A \rangle = r-1$  } \}.
 \end{align*}
  
Equivalently, $\rho_X$ is the maximum integer such that any zero-dimensional scheme $A \subseteq X$ with $\deg(A) \leq \rho_X$ satisfies $h^1(\calI_A(1)) = 0$ and similarly in the case of a set of distinct points and $\rho_X^\circ$. Clearly $2 \leq \rho_X \leq \rho_X^\circ$. Moreover $\rho_X = \rho_X^\circ = 2$ whenever $X$ contains a line.

Let $p \in \bbP^N$ and suppose that $A \subseteq X$ is a zero-dimensional scheme such that $p \in \langle A \rangle$ and there does not exist $A' \subseteq A$ such that $p \in \langle A' \rangle$. We have $c_X(p) \leq \deg(A)$.

If $A$ is not reduced, then $R_X(p) \geq \rho_X + 1 - \deg(A)$. Indeed, if $S$ is a set of distinct points evincing $R_X(p)$, we have $h^1(\calI_{A \cup S}(1)) > 0$, so $\deg(A \cup S) \geq  \rho_X +1$ by Lemma \ref{lemma: union minimals have h^1} and therefore $\deg(A) + \deg(S) \geq \rho_X+1$. Hence, $R_X(p) \geq \rho_X + 1 - \deg(A)$. In addition, if $\deg(A) \leq \frac{1}{2}(\rho_X + 1)$, then $c_X(p) = \deg(A)$. Indeed, if $c_X(p) < \deg(A)$, let $B$ be a zero-dimensional scheme evincing $c_X(p)$. We have $\deg(B) < \deg(A)$; on the other hand $h^1(\calI_{A \cup B} (1) ) >0$ by Lemma \ref{lemma: union minimals have h^1}, so $\deg(A \cup B) \geq \rho_X + 1$. We conclude $\deg(A) + \deg(B) \geq \rho_X + 1$ but $\deg(A) + \deg(B) < 2 \deg(A)\leq = \rho_X + 1$, which is a contradiction.  Similarly, if $A \leq \frac{1}{2}\rho_X$, then $A$ is the unique zero-dimensional scheme evincing $c_X(p)$ because if $B \neq A$ evinces $c_X(p)$, we obtain a contradiction as above.

Similar considerations hold for sets of distinct points $S$ and the relation between $\deg(S)$ and $\rho_X^\circ$. 
\end{remark}
We point out that for every $d \geq 1$, $\rho_{\scrV_d} = \rho_{\scrV_d}^\circ = d+1$; in particular $\rho_{\scrV_d}$ does not depend on $n$ and in the case $n=2$ Remark \ref{remark: basics on rhoX} reduces to Remark \ref{remark: sylvester thm}. Similarly for every $d_1 \vvirg d_k$, $\rho_{\scrV_{d_1 \vvirg d_k}} = \rho_{\scrV_{d_1 \vvirg d_k}}^\circ = \min \{d_i +1 : 1 \leq i \leq k \}$; again, $\rho_{\scrV_{d_1 \vvirg d_k}}$ does not depend on the dimension of the factors of $\scrV_{d_1 \vvirg d_k}$.

\subsection{Flattenings}\label{appendix: flattenings}
A classical approach to determine lower bounds on rank and border rank is via flattening maps. Let $V$ be a vector space. Given two vector spaces $E,F$, a flattening of $V$ is a linear map $Flat_{E,F} : V \to \Hom(E,F)$, that associates to every element $T \in V$ a linear map $T_{E,F}: E \to F$.  In particular, if $X \subseteq \bbP V$ is a projective variety, write $r_0 = \max\{ \rank (p_{E,F}) : p \in X \}$ (here $\rank$ denotes the rank of the linear map $p_{E,F}$). Then, for every $T \in V$, we have (see e.g. \cite[Prop. 4.1.1]{landott} and also  \cite[Lemma 18]{cjz})
\begin{equation}\label{eqn: flattening lower bound}
\uR_X(T) \geq \frac{1}{r_0} \rank(T_{E,F}). 
\end{equation}

In the setting of partially symmetric tensors, a particular class of flattenings arises naturally via tensor contraction. Fix $d_1 \vvirg d_k$ and $n_1 \vvirg n_k$. For every choice of $e_1 \vvirg e_k$ with $0 \leq e_i \leq d_i$, we define the flattening map sending $T \in S^{d_1} \bbC^{n_1} \ootimes S^{d_k} \bbC^{n_k}$ to the linear map
\[
 T_{e_1 \vvirg e_k} : S^{e_1} \bbC^{n_1 *} \ootimes S^{e_k} \bbC^{n_k*} \to S^{d_1 - e_1} \bbC^{n_1} \ootimes S^{d_k - e_k} \bbC^{n_k},
\]
given canonically by contraction. More explicitly, if $S^d \bbC^n$ is identified with the space of homogeneous polynomials of degree $d$ in $n$ variables, then $S^e \bbC^{n*}$ can be interpreted as the space of differential operators of order $e$ with constant coefficients, and the map above is given simply by applying a differential operator to every factor of $T$. In this case, the denominator in \eqref{eqn: flattening lower bound} is $r_0 = 1$, so we have $R_{d_1 \vvirg d_k} (T) \geq \uR_{d_1 \vvirg d_k}(T) \geq \rank (T_{e_1 \vvirg e_k})$ for any choice of $e_1 \vvirg e_k$.

Thm. 4 in \cite{gal} shows that for a large family of flattening maps, including the ones just presented, the border rank lower bound of equation \eqref{eqn: flattening lower bound} holds for $X$-border cactus rank as well. Therefore we obtain $c_{d_1 \vvirg d_k} (T) \geq \uc_{d_1 \vvirg d_k}(T) \geq \rank (T_{e_1 \vvirg e_k})$.

Moreover, if $T^{(1)}$ has a flattening $T^{(1)}_{E_1,F_1}$ and $T^{(2)}$ has a flattening $T^{(2)}_{E_2 ,F_2}$ then the map $T^{(1)}_{E_1,F_1} \boxtimes T^{(2)}_{E_2,F_2} : E_1 \otimes E_2 \to F_1 \otimes F_2$ is a flattening of $T^{(1)} \otimes T^{(2)}$ and we have $\rank (T^{(1)}_{E_1,F_1} \boxtimes T^{(2)}_{E_2,F_2}) = \rank(T^{(1)}_{E_1,F_1}) \rank(T^{(2)}_{E_2,F_2})$ (see also \cite[Section 4]{cjz}).

This guarantees that lower bounds on border cactus rank obtained via the flattening maps presented above are multiplicative. In particular, in the symmetric case, we have the following.

\begin{remark}
 Fix $d_1 \vvirg d_k, n_1 \vvirg n_k$ and consider $f_i \in S^{d_i} \bbC^{n_i}$.  If, for $i=1 \vvirg k$, $f_i$ is generic in $\sigma_{r_i}( \nu_{d_i}(\mathbb{P}^{n_i}))$ with $r_i \leq \binom{\lfloor d_i/2\rfloor + n_i -1}{d_i}$, then $R_{d_1 \vvirg d_k} (f_1 \ootimes f_k) = \prod R_{d_i} (f_i) = \prod r_i$. This follows from the fact that generic elements of $\sigma_{r_i}( \nu_{d_i}(\mathbb{P}^{n_i}))$ have rank equal to $r_i$ and from \cite[Prop. 3.12]{ik} which guarantees that for a generic element of $\sigma_{r_i}( \nu_{d_i}(\mathbb{P}^{n_i}))$ with $r_i \leq  \binom{\lfloor d_i/2\rfloor + n_i -1}{d_i}$, then the flattening $( f_i )_{\lfloor d_i/2 \rfloor}$ has rank exactly $r_i$. Indeed, in this range, one verifies $R_{d_i}(f_i) = c_{d_i}(f_i)$ and there exists a unique zero-dimensional scheme (which is in fact a set of distinct points) $S_i \subseteq \bbP^1$ such that $\deg(S_i) = r_i$ and $f_i \in \langle \nu_{d_i}(S_i) \rangle$ (see e.g. Remark \ref{remark: basics on rhoX}).
\end{remark}

The previous observation motivates the following questions, which gives a more precise version of the problem posed in part (iii) of Question \ref{question: final questions}
\begin{question}
 Fix $d_1 \vvirg d_k, n_1 \vvirg n_k$ and $f_i \in S^{d_i} \bbC^{n_i}$ with cactus rank $r_i \leq \lceil d_i/2 \rceil$. Let $A_i\subset \PP^{n_i-1}$ be the unique zero-dimensional scheme of degree $r_i$ evincing the cactus rank of $f_i$. Is $A_1\times \cdots \times A_k$ the only zero-dimensional scheme evincing the cactus rank of $f_1 \ootimes f_k$? Is this true in the very special case where $n_i = 2$, $d_i$ is odd and each $f_i$ is a general element of $S^{d_i}(\CC ^2)$ (so $r_i = \lceil d_i/2 \rceil$)?
\end{question}

\subsection{Minimally spanning schemes and linear independence}\label{section: minimally spanning}

In this section, we present an example which shows that a minimally spanning scheme (in the sense of inclusion) is not necessarily linearly independent. More formally, we describe an example of the following situation: $X \subseteq \bbP^N$ is an irreducible smooth variety and $p \in \bbP^N$ is a point such that there exists a zero-dimensional scheme $B \subseteq X$ such that $B$ minimally spans $p$ (namely $p \in \langle B \rangle$ and there is no $B' \subsetneq B$ such that $p \in \langle B' \rangle$) and $B$ is linearly dependent (in the sense that $h^1(\calI_B(1)) > 0$). In particular, this shows that the hypothesis $h^1(\calI_B(1)) = 0$ in Lemma \ref{lemma: 5.1 plus} is necessary and does not follow from the other hypothesis of the lemma.

We point out that in our example $B$ does not evince the cactus rank of the point $p$. In particular the minimality of $B$ is only with respect to inclusion (as in the hypothesis of Lemma \ref{lemma: 5.1 plus}), not with respect of degree.

For the theory underlying this example, we refer to \cite[Ex. IV.3.3.3 and Sec. IV.4]{hart} and \cite[Sec. III.3]{shaf}.

\begin{example}\label{example: minimally spanning with h1}
 Let $C$ be a curve of genus $1$ and let $\calL$ be a line bundle of degree $N+1$ on $C$. Then $\calL$ provides an embedding $\phi_\calL : C \to \bbP^N \simeq \bbP H^0(C,\calL)^*$ as a normal elliptic curve in $\bbP^N$ with $\deg(\phi_L(C)) = N+1$: let $X = \phi_\calL (C)$. There are exactly $(N+1)^2$ points $ q \in C$ such that the divisor $(N+1)q$ is an element of $\vert \calL(1) \vert$; equivalently, this means that the zero-dimensional scheme $B_1 = \phi_L( (N+1)q) \subseteq X$ is given by the intersection between $X$ and a hyperplane $H \subseteq \bbP^N$.
 
 Observe that $\langle B_1 \rangle = H$: since $B_1 \subseteq H$ then $\langle B_1 \rangle \subseteq H$; on the other hand, if $\langle B_1 \rangle = M \subsetneq H$, consider a hyperplane containing $M$ and another point $z \in X$; this hyperplane intersects $X$ in a zero-dimensional scheme of degree at least $\deg( (N+1)q) + \deg(z) = N+1 + 1 = N+2$ in contradiction with Bezout's Theorem since $\deg(X) = N+1$. 
 
Now let $B = \phi_L((N+2)q)$. Then $\langle B \rangle = \bbP^N$: indeed $H \subseteq \langle B \rangle$ and if equality holds then $H$ intersects $X$ in the scheme $(N+2)q$, whose degree is $N+2$, again in contradiction with Bezout's Theorem.

Let $p$ be any point in $\bbP^N$ such that $ p\notin H$. Then $p \in \langle B \rangle$. Since $B$ is connected and supported on $X$, every proper subscheme of $B$ is contained in $B_1$, so that if $B' \subsetneq B$, then $\langle B' \rangle \subseteq \langle B_1 \rangle = H$, and $p \notin \langle B' \rangle $. This shows that $B$ minimally spans $p$. On the other hand, $h^1(\calI_B(1)) \geq \deg(B) - (\dim \langle B \rangle +1) = 1$. In fact $h^1(\calI_{B_1}(1)) = 1$ already.

More explicitly, when $N = 2$, let $q$ be one of the $(2+1)^2 = 9$ flexes on a smooth plane cubic, that are the $9$ points given by the intersection between the plane cubic and its Hessian, that is the curve cut out by the determinant of the matrix of second order partial derivatives. Then, the scheme $3q$ supported at $q$ is contained in (and in fact spans) the tangent line at $q$. If $p$ is a point not lying on the tangent line at $q$, then $p \notin \langle 3q \rangle$ and $p \in \langle 4q\rangle$. On the other hand $\deg(4q) = 4$, and $\dim \langle 4q \rangle = 2$, so $h^1(\calI_{4q}(1)) = 4 - (2+1) = 1$.

We observe that one can construct examples similar to the one above using flexes of higher order to obtain zero-dimensional schemes $B$ minimally spanning a point $p$ with $h^1(\calI_B(1))$ arbitrarily large. It suffices to consider a curve $X \subseteq \bbP^N$ with the property that $X \cap H$ is a connected zero-dimensional scheme $B_1$ of high degree, so that $h^1(\calI_{B_1}(1)) = m$ is arbitrarily large and then repeat the same construction as above. Curves of high degree with these properties exist for every $m$ and every $N$. When $N= 2$, these examples can be constructed as soon as the degree of the curve is at least $m+2$.
\end{example}

\bibliographystyle{amsalpha}
\bibliography{bibfileBBCG-Wstates}

\end{document}